\newtheorem{theorem}{Theorem}[section]
\newtheorem{lemma}[theorem]{Lemma}
\newtheorem{corollary}[theorem]{Corollary}
\theoremstyle{remark}
\theoremstyle{definition}
\newcommand{\abs}[1]{|#1|}	
\newcommand{\R}{{\mathbb {R}}} 
\newcommand{\Z}{{\mathbb {Z}}} 
\newcommand{\Q}{{\mathbb {Q}}} 
\newcommand{\C}{{\mathbb {C}}}
\newcommand{\om}{{{\omega}}}
\newcommand{\gr}[1]{{\color{gray!75!black} #1}}
\newcommand{\ignore}[1]{{}}
\begin{document}
\title[]{Pretty good state transfer among large sets of vertices}

\author{Ada Chan}
\address{Department of Mathematics and Statistics\\York University\\ Toronto\\ ON\\ Canada N3J 1P3}
\author{Peter Sin}
\address{Department of Mathematics\\University of Florida\\ P. O. Box 118105\\ Gainesville FL 32611\\ USA}

\thanks{This work was partially supported by a grant from the Simons Foundation (\#633214
 to Peter Sin)}
\thanks{Ada Chan gratefully acknowledges the support of the Natural Sciences and Engineering Council of Canada (NSERC) Grant No. RGPIN-2021-03609}

\begin{abstract} In a continuous-time quantum walk on a network of qubits,
  pretty good state transfer is the phenomenon of state transfer between two vertices with fidelity arbitrarily close to 1.
  We construct families of graphs to demonstrate that there is no bound on the size of
  a set of vertices that admit pretty good state transfer between any two vertices of the set.  
\end{abstract}

\maketitle

\section{Introduction} 

Let $X$ be a simple finite graph, with adjacency matrix $A$, for some
fixed ordering of the vertex set $V(X)$.
Let $\C^{V(X)}$ denote the Hilbert space  in which the characteristic vectors
$e_a$, $a\in V(X)$, form an orthonormal basis.
A continuous-time quantum walk on $X$, based on the $XX$-Hamiltonian \cite[IV.E]{K}, is given by
the family $U(t)=\exp(-itA)$ of unitary matrices, for $t\in\R$, operating 
on $\C^{V(X)}$.
Two phenomena of central importance in the theory are {\it perfect
  state transfer} (PST) and {\it pretty good state transfer} (PGST).
Let $a$ and $b$ be vertices of $X$. We say that we have perfect state transfer from $a$ to $b$
at time $\tau$ if $\abs{U(\tau)_{b,a}}=1$. In other words, an initial state $e_a$
concentrated on the vertex $a$ evolves at time $\tau$ to one concentrated on $b$.
The concept of pretty good state transfer is an approximate version of perfect state transfer. We say that we have
pretty good state transfer from $a$ to $b$ if for every real number $\epsilon>0$  there exists
a $\tau\in\R$ such that $\abs{U(\tau)_{b,a}}>1-\epsilon$.
It is not hard to see that the relation
on $V(X)$, whereby vertices $a$ and $b$
are related if we have perfect state transfer from $a$ to $b$ at some time,
is an equivalence relation. Likewise
the relation defined by pretty good state transfer from $a$ to $b$ is
another equivalence relation .
 In these terms, a well known observation of Kay \cite[IV.D]{K} states that each PST-equivalence
class can have at most two members. This property of perfect state transfer is undesirable for the purpose of routing.
By contrast, \cite[example 4.1]{PB} shows that
in the cartesian product $P_2\square P_3$ of paths of lengths 2 and 3, the
4 vertices of smallest degree form a PGST-equivalence class. A natural question then
is: How large can a PGST-equivalence class be? The main aim of this paper is to
construct examples of graphs with arbitrarily large PGST-equivalence classes.
Our examples are generalizations of the example above in that they are 
finite cartesian products of paths of different lengths and the large PGST-equivalence class
is the set of vertices of minimum degree (the ``corners''). In order for the corners to be
PGST-equivalent the path lengths of the cartesian factors
have to be selected rather carefully to satisfy certain
arithmetic conditions which enter into the problem via Kronecker's approximation theorem,
a theorem whose relevance to pretty good state transfer was first noted in \cite{GKSS} and \cite{VZ}.

Since pretty good state transfer in a cartesian product implies pretty good state transfer for each cartesian factor, the paths involved must have pretty good state transfer between their end-vertices. Such paths have been classified in \cite{GKSS}; they are the paths of length $p-1$ or $2p-1$, where p is a prime, or of length $2^e-1$. 
Thus, we consider only cartesian products of paths of these lengths.
In Section~\ref{sc} we first classify the cartesian products
for which all corners are strongly cospectral,
as strong cospectrality is necessary for pretty good state transfer.
The results in Sections~\ref{nopgstsection} and \ref{pgstsection} lead to the following classification which is the main result of this paper.
\begin{theorem}\label{classification}
Let $X$ be a cartesian product of paths.  All corners of $X$ belong to the same PGST-equivalence class if and only if one of the following holds.
\begin{enumerate}
\item
Up to permutation of the cartesian factors, $X=P_{2^e-1}\square P_{p-1}$, for some $e\geq 2$ and prime $p\geq 3$.
\item
Up to permutation of the cartesian factors, $X=P_{2^e-1}\square P_{2p-1}$, for some $e\geq 2$ and prime $p\geq 3$.
\item
Up to permutation of the cartesian factors, 
$X= P_{p_1-1}\square \cdots\square P_{p_h-1}\square P_{2q_1-1}\square P_{2q_k-1}$,
where $h,k \geq 0$ and  $p_1,\dots,p_h, q_1, \dots, q_k$ are distinct primes such that 
\begin{equation*}
    p_1, \dots, p_h \equiv 1 \pmod 8
    \quad \text{and} \quad q_1,\dots,q_k \equiv 1 \pmod 4.
\end{equation*}
\item
Up to permutation of the cartesian factors, 
\begin{equation*}
    X=P_{2^e-1}\square P_{p_1-1}\square \cdots\square P_{p_h-1}\square P_{2q_1-1}\square P_{2q_k-1},
\end{equation*}  
where $h,k \geq 0$,  $e\geq 2$ and $p_1,\dots,p_h, q_1, \dots, q_k$ are distinct primes such that 
\begin{equation*}
    p_1, \dots, p_h \equiv 1 \pmod 8
    \quad \text{and} \quad q_1,\dots,q_k \equiv 1 \pmod 4.
\end{equation*}
\end{enumerate}
\end{theorem}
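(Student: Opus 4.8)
The plan is to reduce pretty good state transfer in the cartesian product to a simultaneous Diophantine approximation problem on the eigenvalue support of a single corner vertex, and then to analyze that problem prime-by-prime using Kronecker's theorem. First I would recall the eigenvalues of the path $P_{n}$: they are $\lambda_j = 2\cos\big(\tfrac{j\pi}{n+1}\big)$, $1 \le j \le n$, and each end-vertex of $P_n$ has full eigenvalue support with strong cospectrality between the two ends (sign pattern given by the parity of $j$). For a cartesian product $X = P_{n_1}\square\cdots\square P_{n_r}$, the eigenvalues of $A(X)$ are sums $\lambda_{j_1}^{(1)}+\cdots+\lambda_{j_r}^{(r)}$ and a corner vertex is the tensor product of end-vertices, so its eigenvalue support is the full set of such sums, with the strong cospectrality sign of a given corner pair being a product of the per-factor signs. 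The standard criterion (Banchi–Coutinho–Godsil–Severini, via Kronecker) then says PGST between two strongly cospectral vertices holds exactly when, for every integer combination of the eigenvalues in the support that is orthogonal to the all-ones vector, the corresponding combination of the $\pm1$ sign vector entries is forced to vanish; equivalently the sign assignment must be a $\Z$-linear functional of the eigenvalues modulo the relation lattice. So the first real step is to write down this criterion carefully for each of the $2^{r}$ relevant corner pairs simultaneously.

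Next I would bring in the arithmetic of the cosines. The key number-theoretic input, already used in \cite{GKSS}, is that $2\cos(\pi/n)$ generates a subfield of a cyclotomic field, and the multiplicative/linear relations among the numbers $\{2\cos(j\pi/n)\}$ are governed by the factorization behaviour of $2$ (and of the relevant small primes) in $\Q(\zeta_{2n})$. Concretely, for a path of length $p-1$ the relation lattice among its eigenvalues is controlled by whether $2$ is a square mod $p$, i.e. by $p \bmod 8$; for a path of length $2p-1$ the relevant condition loosens to $p \bmod 4$; and for a path of length $2^e-1$ the factor $2$ is (up to units) a prime power in the relevant ring, which is why those paths behave differently and can be combined freely with one copy of something else. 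I would isolate these facts as the content of Sections~\ref{nopgstsection} and \ref{pgstsection} (which I may cite), and reformulate each as: "the sign vector of a corner pair extends to a $\Z$-linear functional modulo relations if and only if [congruence condition on the primes]''.

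The proof of Theorem~\ref{classification} then proceeds by combining these per-factor statements multiplicatively. In one direction (sufficiency) I would show that if the path lengths satisfy the stated congruences, then for every corner pair the required linear functional exists: build it factor by factor, using that a relation in the product is a sum of relations within factors plus a "cross" relation, and that the cross relations impose exactly the compatibility that the congruences guarantee. Here the cases with a $P_{2^e-1}$ factor are easy because that factor can absorb an arbitrary sign twist. In the other direction (necessity) I would assume all corners are PGST-equivalent, hence in particular strongly cospectral — this already restricts us (via Section~\ref{sc}) to products of the listed path types — and then, for a carefully chosen corner pair, exhibit a specific integer relation among the product eigenvalues on which the sign functional fails to vanish unless the congruence holds; the cleanest such relations come from pairing two cartesian factors at a time and using a quadratic-residue obstruction. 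The main obstacle I anticipate is the bookkeeping of the relation lattice of the product: one must show the product relation lattice is generated by the within-factor lattices together with an explicit, small set of cross relations, and that no unexpected "global'' relations appear that would either create or destroy PGST. Controlling this lattice — essentially a statement about linear independence of cosines across different moduli, reducible to disjointness of the ramification of the distinct primes $p_i, q_j, 2$ in the compositum of cyclotomic fields — is where the real work lies, and it is what forces the primes to be distinct and pins down the exact congruence classes.
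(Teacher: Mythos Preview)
Your overall architecture matches the paper's: apply the Kronecker-type criterion (Theorem~\ref{PGSTcriterion}), separate the integer relation factor-by-factor using that the eigenvalue field of one factor meets the field generated by the remaining factors only in $\Q$, and read off the congruences. Since you propose to cite Sections~\ref{nopgstsection} and~\ref{pgstsection} for the per-factor computations, the assembled argument would in the end reproduce the paper's proof in Section~5.4.

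Two points in your sketch are genuinely off, however. First, the heuristic that the $P_{p-1}$ condition is ``whether $2$ is a square mod $p$'' is wrong: $2$ is a quadratic residue modulo $p$ exactly when $p\equiv\pm1\pmod 8$, yet $p\equiv 7\pmod 8$ is excluded (it falls under Lemma~\ref{Lemma-p-q-3mod4} since $7\equiv 3\pmod 4$). The actual mechanism is a direct parity count coming from the basis relation~\eqref{Eqn-p-alt-sum}: one derives $2\sum_{r\text{ even}}a_r=-\tfrac{p-1}{2}s$ for some integer $s$, and this forces the left side even for all $s$ precisely when $8\mid p-1$. No Legendre symbol enters, and the obstruction constructions in Section~\ref{nopgstsection} are explicit matrices, not quadratic-residue arguments. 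Second, you assert that a corner of the product has full eigenvalue support with sign pattern equal to the product of the per-factor signs. This is true only when the product eigenvalues are \emph{simple}; otherwise a projector is a nontrivial sum of tensor products and strong cospectrality can fail outright (Lemma~\ref{Lemma-gcd>=3} is exactly such a failure). The paper spends Lemmas~\ref{simple} and~\ref{simpleY} proving simplicity under the distinct-primes hypothesis, and this is needed before condition~(a) of Theorem~\ref{PGSTcriterion} is even available in the sufficiency direction --- your sketch skips it entirely. Finally, for the necessity direction you need Lemma~\ref{Lemma-1factor} to lift an obstruction found in a two-factor subproduct to the full product; ``pairing two factors at a time'' is the right idea but is not self-justifying without that lemma.
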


In Section~\ref{Lpgstsection}, we show on the contrary that there is no cartesian product of paths with pretty good state transfer among all corners when the Laplacian matrix is used as the Hamiltonian of the quantum walk.
\section{Notation and background results}
Let $A$ be the adjacency matrix of $X$.  We consider the spectral decomposition of $A$,
\begin{equation}
  A=\sum_{r=1}^k \theta_rE_r,
\end{equation}
where $\theta_1$,\dots, $\theta_k$ are the distinct eigenvalues of $A$ and $E_r$
is the idempotent projector onto the $\theta_r$ eigenspace.

Two vertices $a$ and $b$ are said to be {\it strongly cospectral} if and only if
for all $r$ we have $E_re_a=\pm E_re_b$.
The terminology is justified  by the fact that the above condition implies that
$(E_r)_{a,a}=(E_r)_{b,b}$ for all $r$, which is one of several equivalent definitions of {\it cospectrality} of $a$ and $b$.

Strong cospectrality is a fundamental notion in the study of quantum state transfer, and is a necessary condition for both
perfect state transfer and pretty good state transfer \cite{G12}.
The {\it eigenvalue support} $\Phi_u$ of a vertex $u$ is the set of eigenvalues $\theta_r$ for which $E_re_u\neq 0$. 
If $u$ and $v$ are strongly cospectral then $\Phi_u=\Phi_v$ and this set is the disjoint union
of $\Phi_{u,v}^+=\{\theta_r\mid E_re_u=E_re_v\}$ and $\Phi_{u,v}^-=\{\theta_r\mid E_re_u=-E_re_v\}$. 

If an eigenvalue of $X$ is simple, then the corresponding projector is a rank one symmetric matrix.
The following lemma is then immediate from the above definition of cospectrality.
\begin{lemma}\label{simplesc}
    If $X$ has simple eigenvalues then two vertices that are cospectral are
    strongly cospectral. \qed
  \end{lemma}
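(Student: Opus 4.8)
The plan is to unwind both notions down to the entries of the eigenvectors. Since every eigenvalue of $X$ is simple, each spectral idempotent $E_r$ is a real symmetric matrix of rank one, so there is a unit vector $v_r$ (a $\theta_r$-eigenvector) with $E_r = v_rv_r^{T}$. Then for any vertex $u$ we have $E_re_u = (v_r^{T}e_u)v_r = (v_r)_u\,v_r$, and in particular $(E_r)_{u,u} = (v_r)_u^{2}$.

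First I would recall that cospectrality of $a$ and $b$ means $(E_r)_{a,a} = (E_r)_{b,b}$ for all $r$; by the formula above this is exactly $(v_r)_a^{2} = (v_r)_b^{2}$, i.e. $(v_r)_a = \pm (v_r)_b$ for each $r$ (with the sign chosen per eigenvalue, and both sides zero when $(v_r)_a=0$). Multiplying through by $v_r$ gives $E_re_a = (v_r)_a v_r = \pm (v_r)_b v_r = \pm E_re_b$ for every $r$, which is precisely the definition of strong cospectrality of $a$ and $b$.

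There is essentially no obstacle here: the only point needing a word of care is that the sign $\pm$ may depend on $r$, which is allowed by the definition of strong cospectrality (it is encoded in the partition $\Phi_{a,b}^{+}\sqcup\Phi_{a,b}^{-}$), and that the degenerate case $(v_r)_a = (v_r)_b = 0$ is harmless since then $E_re_a = E_re_b = 0$. Hence the lemma follows immediately, as claimed.
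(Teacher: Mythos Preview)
Your argument is correct and is exactly the unwinding the paper has in mind: the paper remarks that a simple eigenvalue gives a rank-one symmetric projector and then declares the lemma ``immediate,'' and your proof just spells out that immediacy via $E_r=v_rv_r^{T}$ and $(E_r)_{u,u}=(v_r)_u^{2}$.
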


  The following theorem  \cite[Theorem 2]{BCGS} (also \cite[Lemma 2.2]{KLY}) is our main tool. It is a direct application of
  of Kronecker's approximation theorem to quantum walks.
  
  \begin{theorem}\label{PGSTcriterion} Let $X$ be a simple graph. Then two vertices
    $u$ and $v$ are in the same PGST-equivalence class if and only if the following conditions hold.
   \begin{enumerate}
   \item[(a)] $u$ and $v$ are strongly cospectral.
   \item[(b)] There is no sequence of integers $\{\ell_i\}$ such that all three
     of the following equations hold:
     \begin{enumerate}
     \item[(i)] $\sum_i\ell_i\theta_i=0$;
     \item[(ii)] $\sum_i\ell_i=0$; 
     \item[(iii)] $\sum_{i: \theta_i\in\Phi_{u,v}^-}\ell_i\equiv 1\pmod2$.
     \end{enumerate}
   \end{enumerate}\qed
 \end{theorem}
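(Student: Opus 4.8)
The plan is to recast pretty good state transfer from $u$ to $v$ as a statement about simultaneously approximating the eigenvalue phases $e^{-i\tau\theta_r}$, and then to apply Kronecker's approximation theorem. First I would write $U(\tau)=\sum_r e^{-i\tau\theta_r}E_r$, so that $U(\tau)e_u=\sum_{\theta_r\in\Phi_u}e^{-i\tau\theta_r}E_re_u$, only eigenvalues in the support of $u$ contributing. Since $U(\tau)$ is unitary, $\abs{U(\tau)_{v,u}}>1-\epsilon$ forces $\|U(\tau)e_u-U(\tau)_{v,u}e_v\|$ to be small, so pretty good state transfer from $u$ to $v$ is equivalent to the existence of a sequence $\tau_n$ and, after passing to a subsequence along which $U(\tau_n)_{v,u}/\abs{U(\tau_n)_{v,u}}$ converges, a unit scalar $\gamma$ with $U(\tau_n)e_u\to\gamma e_v$. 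Applying each projector $E_r$ and using that the $E_r$ are real symmetric matrices while $e_u,e_v$ are real vectors, one deduces that $E_re_u\neq0$ forces $e^{-i\tau_n\theta_r}$ to converge to $\pm\gamma$ with $E_re_v=\pm E_re_u$ for the matching sign, while $E_re_u=0$ forces $E_re_v=0$. This already yields strong cospectrality, i.e.\ condition~(a), and it shows that pretty good state transfer holds if and only if (a) holds and there are a unit $\gamma$ and a sequence $\tau_n$ with $e^{-i\tau_n\theta_r}\to\gamma$ for $\theta_r\in\Phi_{u,v}^+$ and $e^{-i\tau_n\theta_r}\to-\gamma$ for $\theta_r\in\Phi_{u,v}^-$; the reverse implication is immediate, since by the definition of $\Phi_{u,v}^{\pm}$ such a sequence gives $U(\tau_n)e_u\to\gamma e_v$.

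Next I would remove the global phase $\gamma$. The case $u=v$ is trivial, so assume $u\neq v$; then $\Phi_{u,v}^+\neq\emptyset$, for otherwise $e_u=\sum_rE_re_u=-\sum_rE_re_v=-e_v$, which is impossible for distinct standard basis vectors. Fix $\theta_1\in\Phi_{u,v}^+$. Dividing the relations above by $e^{-i\tau_n\theta_1}$, and conversely recovering $\gamma$ as a subsequential limit of $e^{-i\tau_n\theta_1}$, the condition becomes the existence of $\tau_n$ with $\tau_n(\theta_r-\theta_1)\to0\pmod{2\pi}$ for $\theta_r\in\Phi_{u,v}^+$ and $\tau_n(\theta_r-\theta_1)\to\pi\pmod{2\pi}$ for $\theta_r\in\Phi_{u,v}^-$. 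Here I would invoke Kronecker's approximation theorem in its rational-dependence form: for real numbers $\alpha_j$ and $\beta_j$, there is a sequence $t_n$ with $t_n\alpha_j\to\beta_j\pmod{2\pi}$ for all $j$ if and only if every integer relation $\sum_j m_j\alpha_j=0$ forces $\sum_j m_j\beta_j\in2\pi\Z$. Applying this with the $\alpha_j$ equal to the differences $\theta_r-\theta_1$ over $\theta_r\in\Phi_u\setminus\{\theta_1\}$ and the $\beta_j$ equal to $0$ or $\pi$ according as $\theta_r\in\Phi_{u,v}^+$ or $\Phi_{u,v}^-$, the required $\tau_n$ exist if and only if every integer tuple $(m_r)$ with $\sum_r m_r(\theta_r-\theta_1)=0$ has $\sum_{\theta_r\in\Phi_{u,v}^-}m_r$ even.

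Finally I would match this with condition~(b), where the eigenvalues $\theta_i$ are understood to run over the common support $\Phi_u=\Phi_v$ (for the corners of a cartesian product of paths this is the whole spectrum, so there the distinction is moot). A tuple $(m_r)$ as above extends to an integer sequence $\{\ell_i\}$ by setting $\ell_1=-\sum_r m_r$ and leaving the other entries equal to the $m_r$ (and $0$ outside the support): then (i) and (ii) hold, and $\sum_{\theta_i\in\Phi_{u,v}^-}\ell_i=\sum_{\theta_r\in\Phi_{u,v}^-}m_r$, so (iii) holds exactly when this common value is odd; conversely any $\{\ell_i\}$ obeying (i)--(ii) restricts, after reabsorbing $\ell_1$ via $\sum_i\ell_i=0$, to such a tuple with the same parity. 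Hence the Kronecker condition above is precisely the negation of (b), and combined with the first paragraph this proves the theorem. The main obstacle, I expect, is exactly this last translation: isolating the contribution of the free global phase $\gamma$ — which is what forces the auxiliary equation~(ii) — and verifying that integer relations among the shifted eigenvalues $\theta_r-\theta_1$ correspond bijectively to zero-sum integer relations among the $\theta_i$ while preserving the parity in~(iii). One must also take care in the first step that pretty good state transfer only supplies $\abs{U(\tau)_{v,u}}\to1$, not convergence of $U(\tau)_{v,u}$ itself, so the limiting phase $\gamma$ has to be extracted along a subsequence.
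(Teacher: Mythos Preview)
The paper does not supply its own proof of this theorem: it is quoted from \cite{BCGS} (and \cite{KLY}) and the statement is closed immediately with \qed. Your sketch is precisely the standard argument behind those references---reduce PGST to simultaneous approximation of the phases $e^{-i\tau\theta_r}$, extract strong cospectrality and the sign pattern $\pm\gamma$ from $\|U(\tau)e_u-\gamma e_v\|\to0$, kill the free phase by fixing some $\theta_1\in\Phi_{u,v}^+$, and then invoke the rational-dependence form of Kronecker's theorem. Your bijection between integer relations $\sum_r m_r(\theta_r-\theta_1)=0$ and zero-sum relations $\sum_i\ell_i\theta_i=0$, $\sum_i\ell_i=0$ (via $\ell_1=-\sum_{r\neq1}m_r$) is exactly the bookkeeping that produces condition~(ii), and since $\theta_1\in\Phi_{u,v}^+$ the parity in~(iii) is preserved. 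Your remark that the $\theta_i$ should be read as ranging over the common support $\Phi_u=\Phi_v$ is correct and, as you say, immaterial for the applications in this paper since the corners of a path product have full support.
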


Let $P_n$ denote the path of length $n$.  Pretty good state transfer between extremal vertices has been characterized in \cite{GKSS}, and between internal vertices in \cite{vB}. We shall make use of
 the extremal case, in which pretty good state transfer occurs if and only if $n+1=p$ or $2p$, where $p$ is a prime, or if $n+1$ is a power of $2$.
We shall consider the cartesian product
\begin{equation}
    X=P_{n_1}\square P_{n_2}\square\cdots\square P_{n_k}
 \end{equation}
of $k$ paths, where $k$ is a positive integer.

\begin{lemma}\label{Lemma-1factor}
If pretty good state transfer occurs between $(x_1, y_1)$ and $(x_2, y_2)$ in $X\square Y$ then pretty good state transfer occurs between $x_1$ and $x_2$ in $X$.
\end{lemma}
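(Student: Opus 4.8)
The plan is to reduce everything to the fact that the propagator of a cartesian product factors as a tensor product. Order $V(X\square Y)=V(X)\times V(Y)$ compatibly with the chosen orderings of $V(X)$ and $V(Y)$, so that the adjacency matrix of $X\square Y$ is $A=A_X\otimes I+I\otimes A_Y$. Since $A_X\otimes I$ and $I\otimes A_Y$ commute, the transfer matrix splits:
\[
  U(t)=\exp(-itA)=\exp(-itA_X)\otimes\exp(-itA_Y)=U_X(t)\otimes U_Y(t),
\]
and hence, at the level of matrix entries,
\[
  U(t)_{(x_2,y_2),(x_1,y_1)}=U_X(t)_{x_2,x_1}\,U_Y(t)_{y_2,y_1}\qquad\text{for all }t\in\R.
\]

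Next I would invoke unitarity of $U_Y(t)$: every entry of a unitary matrix has modulus at most $1$, so $\abs{U_Y(t)_{y_2,y_1}}\le 1$ for all $t$. Combined with the factorization above, this yields the key bound
\[
  \bigl|U(t)_{(x_2,y_2),(x_1,y_1)}\bigr|\le\bigl|U_X(t)_{x_2,x_1}\bigr|\qquad\text{for all }t\in\R.
\]
To finish, fix $\epsilon>0$. Pretty good state transfer between $(x_1,y_1)$ and $(x_2,y_2)$ in $X\square Y$ gives a $\tau\in\R$ with $\abs{U(\tau)_{(x_2,y_2),(x_1,y_1)}}>1-\epsilon$, and the bound then forces $\abs{U_X(\tau)_{x_2,x_1}}>1-\epsilon$. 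As $\epsilon>0$ was arbitrary, this is precisely pretty good state transfer from $x_1$ to $x_2$ in $X$.

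I do not expect any real obstacle here: the entire content is the tensor-product identity $U=U_X\otimes U_Y$, after which the conclusion follows from the elementary observation that entries of unitary matrices are bounded by $1$. The only point requiring a modicum of care is the bookkeeping that makes the vertex orderings of $X$, $Y$, and $X\square Y$ compatible, so that the factorization holds exactly at the level of the relevant matrix entry rather than merely up to a permutation.
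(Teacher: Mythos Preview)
Your proposal is correct and follows essentially the same approach as the paper: factor the transition matrix of $X\square Y$ as $U_X(t)\otimes U_Y(t)$, then use that entries of a unitary matrix have modulus at most $1$ to bound $\abs{U(t)_{(x_2,y_2),(x_1,y_1)}}\le\abs{U_X(t)_{x_2,x_1}}$. If anything, you are a bit more explicit than the paper in justifying the tensor factorization and in invoking unitarity for the bound.
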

\begin{proof}
Let $U_X(t)$ and $U_Y(t)$ denote the transition matrices of $X$ and $Y$, respectively.  Their cartesian product, $X\square Y$ has transition matrix $U_X(t) \otimes U_Y(t)$.
As
\begin{equation*}
\left\vert U_X(t)_{x_1,x_2}\right\vert,  \left\vert U_Y(t)_{y_1,y_2}\right\vert \geq \left\vert\big(U_X(t) \otimes U_Y(t)\big)_{(x_1,y_1), (y_1,y_2)}\right\vert,
\end{equation*}
pretty good state transfer from $(x_1, y_1)$ to $(x_2, y_2)$ in $X \square Y$ implies pretty good state transfer from $x_1$ to $x_2$ in $X$ and pretty good state transfer from $y_1$ to $y_2$ in $Y$.
\end{proof}

By a corner of  $X=P_{n_1}\square P_{n_2}\square\cdots\square P_{n_k}$, we shall mean a vertex $(a_1,\dots, a_k)$
in which every component $a_i$ is an end of $P_{n_i}$. There are $2^k$ corners.  If there is pretty good state transfer between any two corners of $X$, then
for $i=1,\dots, k$,
$n_i+1=p_i$ or $n_i+1=2p_i$, for some prime $p_i$, or  $n_i+1$ is a power of $2$.

\section{Large classes of  strongly cospectral vertices in path products.} \label{sc}
 It is well known (an unpublished result of G. Coutinho) that by taking cartesian products of paths of suitable lengths, one can obtain arbitrarily large equivalence classes of mutually  strongly cospectral vertices. In this section, we include a proof for completeness, and  in order to introduce notations that we shall need later on.

\begin{lemma} The automorphism group of $X$ acts transitively on the corners.
  Hence the corners are mutually cospectral. \qed
\end{lemma}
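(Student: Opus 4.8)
The plan is to prove transitivity directly by exhibiting automorphisms of $X$ that move any chosen corner to any other. First I would recall that each factor $P_{n_i}$ has a unique nontrivial automorphism $\sigma_i$, namely the reflection that swaps its two end-vertices (and more generally sends the vertex at distance $j$ from one end to the vertex at distance $j$ from the other end). The key structural fact I would invoke is that automorphisms of cartesian factors induce automorphisms of the cartesian product: for each subset $S\subseteq\{1,\dots,k\}$, the map $\tau_S$ acting on $V(X)=V(P_{n_1})\times\cdots\times V(P_{n_k})$ by applying $\sigma_i$ in coordinate $i$ for $i\in S$ and the identity in the other coordinates is an automorphism of $X$, since it preserves the cartesian-product adjacency relation coordinatewise. (I would note, but not belabor, that for paths these coordinate reflections in fact generate the full automorphism group, since distinct $n_i$ would rule out permuting factors, but this is not needed for the statement.)

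Next I would fix a base corner $c=(a_1,\dots,a_k)$, where each $a_i$ is one of the two ends of $P_{n_i}$. Given any other corner $c'=(a_1',\dots,a_k')$, let $S=\{i: a_i'\neq a_i\}$ be the set of coordinates in which they differ; in each such coordinate $\sigma_i(a_i)=a_i'$ because $\sigma_i$ swaps the two ends, while in coordinates outside $S$ the corners already agree. Hence $\tau_S(c)=c'$, so $\operatorname{Aut}(X)$ acts transitively on the $2^k$ corners. Since any two vertices in the same orbit of the automorphism group are cospectral — an automorphism $\pi$ commutes with $A$, hence with each spectral idempotent $E_r$, so $(E_r)_{a,a}=(E_r)_{\pi(a),\pi(a)}$ — the corners are mutually cospectral.

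There is essentially no hard step here; the only thing to be careful about is the verification that $\tau_S$ is genuinely a graph automorphism, which comes down to the definition of the cartesian product: $(u,v)\sim(u',v')$ in $X\square Y$ iff ($u=u'$ and $v\sim v'$) or ($u\sim u'$ and $v=v'$), a condition manifestly preserved by applying an automorphism of $X$ in the first coordinate and of $Y$ in the second. The rest is bookkeeping with the reflections $\sigma_i$ and the standard observation that cospectrality is an automorphism invariant. \qed
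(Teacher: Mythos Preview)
Your argument is correct and is precisely the obvious proof the paper has in mind: the lemma is stated with an immediate \qed\ and no further details, and the natural way to see it is exactly via the coordinatewise end-swapping reflections $\tau_S$ that you write down, together with the standard fact that vertices in the same $\Aut(X)$-orbit are cospectral. There is nothing to add.
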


In order for the corners of $X=P_{n_1}\square P_{n_2}\square\cdots\square P_{n_k}$ to be mutually strongly cospectral,
we will need to choose the path lengths $n_i$ more carefully, so that $X$
will have simple eigenvalues, and Lemma~\ref{simplesc} will apply.
The proof of the simplicity
of the eigenvalues will make use of some well known properties the eigenvalues of paths
and of cyclotomic fields, which we shall now discuss.

The adjacency matrix of $P_n$ has eigenvalues $2\cos \frac{r\pi}{n+1}= e^{\frac{r\pi}{n+1}i} + e^{-\frac{r\pi}{n+1}i}$. 
The degree of $2\cos \frac{\pi}{n+1}$ is $d:=\frac{1}{2}\phi(2(n+1))$ where $\phi$ is the totient function.  
For $r=1,\dots, d$, using $T_r$ to denote the Chebyshev polynomial of the first kind of degree $r$, we have
\begin{equation*}
    \cos \frac{r\pi}{n+1}=T_r\left(\cos \frac{\pi}{n+1}\right).
\end{equation*} 
Thus
\begin{equation}\label{Eqn-basis}
    \left\{1, 2\cos \frac{\pi}{n+1}, 2\cos \frac{2\pi}{n+1}, \dots, 2\cos \frac{(d-1)\pi}{n+1}\right\}
\end{equation} 
is a basis of
the field, $F_{n+1}$, generated by the eigenvalues of $P_n$ over $\Q$.  

When $n+1=p$ for some prime $p$, let $\alpha_r:=2\cos \frac{r\pi}{p}$, ($r=1,\dots p-1$).  It follows from the minimal polynomial of the primitive $2p$-th root of unity that  
\begin{equation}\label{Eqn-p-alt-sum}
1+\sum_{j=1}^{\frac{p-1}{2}} (-1)^j\alpha_j=0.
\end{equation}

\begin{lemma}\label{galoisp} Let $p\geq 5$ be a prime and let  $\alpha_j=2\cos \left(\frac{j\pi}{p}\right)$, ($1\leq j\leq \frac{p-1}{2})$. Then for every $1\leq r < s\leq\frac{p-1}{2}$, the set 
    $\{1,\alpha_r,\alpha_s\}$ is linearly independent over $\Q$.
\end{lemma}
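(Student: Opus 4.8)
The plan is to work inside the field $F_p = \Q(\alpha_1)$, which has degree $d = \frac{1}{2}\phi(2p) = \frac{p-1}{2}$ over $\Q$, and to exploit that the Galois group $G = \Gal(F_p/\Q)$ acts on the $\alpha_j$ in a transitive and explicit way. Indeed $F_p$ is the maximal real subfield of $\Q(\zeta_{2p})$, whose Galois group over $\Q$ is $(\Z/2p\Z)^\times \cong (\Z/p\Z)^\times$ (since $p$ is odd), and complex conjugation is the element $-1$; so $G \cong (\Z/p\Z)^\times / \{\pm 1\}$, a cyclic group of order $\frac{p-1}{2}$. Under this identification, the automorphism corresponding to a unit $t \bmod p$ sends $\zeta_{2p} \mapsto \zeta_{2p}^{t'}$ for the odd representative $t'$ of $\pm t$, hence sends $\alpha_j = \zeta_{2p}^j + \zeta_{2p}^{-j}$ to $\alpha_{j'}$ where $j' \in \{1,\dots,\frac{p-1}{2}\}$ is determined by $j' \equiv \pm t j \pmod{2p}$, equivalently (passing to cosines) $\cos\frac{j'\pi}{p} = \cos\frac{tj\pi}{p}$ with $j'$ the unique such index in range. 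The upshot is a concrete description: $G$ permutes $\{\alpha_1,\dots,\alpha_{\frac{p-1}{2}}\}$ via the natural action of $(\Z/p\Z)^\times/\{\pm 1\}$ on the nonzero residues folded into $\{1,\dots,\frac{p-1}{2}\}$.

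Next I would suppose, for contradiction, that $\{1, \alpha_r, \alpha_s\}$ is linearly dependent over $\Q$, say $a + b\alpha_r + c\alpha_s = 0$ with $a,b,c \in \Q$ not all zero. Applying an arbitrary $\sigma \in G$ gives $a + b\sigma(\alpha_r) + c\sigma(\alpha_s) = 0$. Using the basis \eqref{Eqn-basis} — so $\{1, \alpha_1, \dots, \alpha_{d-1}\}$ is a $\Q$-basis of $F_p$, with $\alpha_{d} = \alpha_{\frac{p-1}{2}}$ expressible via \eqref{Eqn-p-alt-sum} as $-1 - \sum_{j=1}^{d-1}(-1)^j\alpha_j$ (for $p\ge 5$, $d = \frac{p-1}{2} \ge 2$) — each such relation becomes a linear relation among the basis elements once we rewrite any occurrence of $\alpha_d$ via \eqref{Eqn-p-alt-sum}. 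The strategy is to choose $\sigma$'s cleverly so that the resulting relations force $b = c = 0$ (and then $a = 0$). Concretely: first pick $\sigma$ with $\sigma(\alpha_r) = \alpha_1$; this is possible because $G$ acts transitively on the set of all $\alpha_j$ (the minimal polynomial of $\alpha_1$ has all the $\alpha_j$ as roots). Then we have a relation $a + b\alpha_1 + c\,\sigma(\alpha_s) = 0$ where $\sigma(\alpha_s) = \alpha_m$ for some $m \ne 1$ (since the action is by a group, distinct inputs go to distinct outputs, and $r \ne s$). Now split into cases according to whether $m \le d-1$ or $m = d$.

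If $m \le d - 1$, then $a\cdot 1 + b\cdot\alpha_1 + c\cdot\alpha_m = 0$ is a linear relation among three distinct basis vectors from \eqref{Eqn-basis} (note $1, \alpha_1, \alpha_m$ are three of the basis elements since $1 \ne m$, $m \ge 2 > 1$), forcing $a = b = c = 0$, contradiction. If $m = d$, substitute $\alpha_d = -1 - \sum_{j=1}^{d-1}(-1)^j\alpha_j$ from \eqref{Eqn-p-alt-sum} to get $(a - c) + (b + c)\alpha_1 + c\sum_{j=2}^{d-1}(-1)^j\alpha_j = 0$; since $d \ge 2$, and when $d \ge 3$ the sum $\sum_{j=2}^{d-1}$ is nonempty giving $c = 0$ directly, whence $a = b = 0$; when $d = 2$ (i.e. $p = 5$) we get $a - c = 0$ and $b + c = 0$, and we handle this residual small case by instead choosing a second automorphism — apply another $\tau \in G$ to the original relation $a + b\alpha_r + c\alpha_s = 0$ sending $\alpha_s \mapsto \alpha_1$, obtaining a companion relation that, combined with the first, pins down the coefficients. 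The one genuine obstacle is bookkeeping the folding map $j \mapsto j'$ and verifying that for the primes in question the chosen automorphisms really do send $\alpha_r, \alpha_s$ to the indices we want while keeping the images distinct; for $p = 5$ one can alternatively just check linear independence of $\{1, \alpha_1, \alpha_2\}$ by hand, since $\alpha_1, \alpha_2$ are the two roots of $x^2 - x - 1$ (up to sign) and are visibly not rational, so no nontrivial rational relation $a + b\alpha_1 + c\alpha_2 = 0$ can hold unless $b = c = 0$.
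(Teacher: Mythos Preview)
Your proof contains a genuine error in the Galois-theoretic step. You claim that $G = \Gal(F_p/\Q)$ permutes the set $\{\alpha_1, \dots, \alpha_{(p-1)/2}\}$, justified by the assertion that ``the minimal polynomial of $\alpha_1$ has all the $\alpha_j$ as roots''. This is false. For $p = 7$, the Galois conjugates of $\alpha_1 = 2\cos(\pi/7)$ are $2\cos(k\pi/7)$ for odd $k \in \{1,3,5\}$, namely $\alpha_1$, $\alpha_3$, and $\alpha_5 = -\alpha_2$; the orbit of $\alpha_1$ is $\{\alpha_1, \alpha_3, -\alpha_2\}$, not $\{\alpha_1, \alpha_2, \alpha_3\}$. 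In general each $\sigma \in G$ sends $\alpha_j$ to $\pm\alpha_k$ for some $k$, with a sign depending on both $\sigma$ and $j$. The argument can be repaired by tracking these signs: one can still find $\sigma$ with $\sigma(\alpha_r) = \pm\alpha_1$ and $\sigma(\alpha_s) = \pm\alpha_m$ for some $m \geq 2$, after which your case split goes through for $p \geq 7$.

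Your treatment of $p = 5$ is also incorrect, and in fact the stated conclusion fails there: $\alpha_1 = \tfrac{1+\sqrt 5}{2}$ and $\alpha_2 = \tfrac{\sqrt 5 - 1}{2}$ satisfy $1 - \alpha_1 + \alpha_2 = 0$, so $\{1, \alpha_1, \alpha_2\}$ is linearly \emph{dependent}. Your claim that ``no nontrivial rational relation $a + b\alpha_1 + c\alpha_2 = 0$ can hold unless $b = c = 0$'' is therefore wrong. (This is a defect in the hypothesis $p \geq 5$ itself; the paper's own proof also breaks down at $p = 5$, since the set of basis elements available for replacement is empty.)

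Setting aside the $p=5$ issue, the paper's argument avoids Galois automorphisms entirely and is much shorter. Since $\{1, \alpha_1, \dots, \alpha_{d-1}\}$ with $d = \tfrac{p-1}{2}$ is already a $\Q$-basis of $F_p$, any three of its members are linearly independent; this immediately handles every case with $s < d$, with no need to move $\alpha_r$ to $\alpha_1$. For the single remaining case $s = d$, the relation $1 + \sum_{j=1}^{d}(-1)^j\alpha_j = 0$ has nonzero coefficient on each $\alpha_j$, so one may swap $\alpha_d$ for any $\alpha_j$ with $1 \leq j \leq d-1$ and $j \neq r$ to obtain a new basis containing $1$, $\alpha_r$, and $\alpha_d$. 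Your detour through the Galois action adds complexity without buying anything: $\alpha_r$ is already a basis element, so there is no reason to transport it elsewhere.
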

  \begin{proof}
The degree of $\alpha_1$ is $d=\frac{p-1}{2}$.
  If $1\leq r < s <d$, the set $\{1,\alpha_r,\alpha_s\}$ is a subset of the basis $\{1, \alpha_1,\dots, \alpha_{d-1}\}$ of $F_{p}$, so it is linearly independent.

It remains to consider $\alpha_d$. By Equation~(\ref{Eqn-p-alt-sum}),
    we may replace any element in 
    \begin{equation*}
    \left\{\alpha_j : 1\leq j \leq d-1 \text{ and } j \neq r\right\}
    \end{equation*}
    by $\alpha_d$ in $\{1, \alpha_1, \dots, \alpha_{d-1}\}$ to get another basis of $F_p$ containing $1$, $\alpha_r$ and $\alpha_d$.
  \end{proof}

The eigenvalues of $P_{2p-1}$ are $\beta_r:=2\cos \left(\frac{r\pi}{2p}\right)$, for $r=1,\dots, 2p-1$.
The field $F_{2p}$ generated by these eigenvalues over $\Q$
is the intersection of the $4p$-th cyclotomic field with the field of real numbers,
 so $\abs{F_{2p}:\Q}=p-1$. We also have $F_{2p}=\Q(\beta_1)$.
  The minimal polynomial of $e^{\frac{\pi}{2p}i}$ yields
\begin{equation}\label{Eqn-2p-alt-sum}
1+\sum_{j=1}^{\frac{p-1}{2}} (-1)^j\beta_{2j}=0.
\end{equation}

\begin{lemma}\label{galois2p} Let $p\geq 7$ be a prime and let  $\beta_r=2\cos \left(\frac{r\pi}{2p}\right)$, ($1\leq r\leq p-1)$. Then for every $1\leq r<s\leq p-1$, the set 
    $\{1,\beta_r,\beta_s\}$ is linearly independent over $\Q$.
\end{lemma}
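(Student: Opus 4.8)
The plan is to follow the same strategy as the proof of Lemma~\ref{galoisp}: combine the explicit $\Q$-basis of $F_{2p}$ furnished by \eqref{Eqn-basis} with the single linear relation \eqref{Eqn-2p-alt-sum}. Setting $n+1=2p$ in \eqref{Eqn-basis} gives $d=\tfrac12\phi(4p)=p-1$, so $\{1,\beta_1,\beta_2,\dots,\beta_{p-2}\}$ is a $\Q$-basis of $F_{2p}$; note that $[F_{2p}:\Q]=p-1$ and that this basis involves exactly $\beta_1,\dots,\beta_{p-2}$.

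First I would dispose of the case $1\le r<s\le p-2$: there $\{1,\beta_r,\beta_s\}$ is a three-element subset of the above basis, so it is linearly independent. The remaining case is $s=p-1$, where $\beta_{p-1}$ is no longer a basis element and \eqref{Eqn-2p-alt-sum} must be used. Its $j=\tfrac{p-1}{2}$ term is $(-1)^{(p-1)/2}\beta_{p-1}$, so that relation expresses $\beta_{p-1}$ as a $\Q$-linear combination, with every coefficient equal to $\pm 1$, of $1$ together with $\beta_2,\beta_4,\dots,\beta_{p-3}$, each of which lies in the basis. I would then pick an even index $2j_0$ with $2\le 2j_0\le p-3$ and $2j_0\ne r$; since $\beta_{2j_0}$ occurs with coefficient $\pm 1\ne 0$ in that expression, the Steinitz exchange lemma shows that replacing $\beta_{2j_0}$ by $\beta_{p-1}$ in $\{1,\beta_1,\dots,\beta_{p-2}\}$ produces another $\Q$-basis of $F_{2p}$ — one that still contains $1$, $\beta_r$ and $\beta_{p-1}$ — whence $\{1,\beta_r,\beta_{p-1}\}$ is linearly independent.

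The one delicate point, and the only place the hypothesis $p\ge 7$ is needed, is checking that an admissible $2j_0$ exists. The set $\{2,4,\dots,p-3\}$ has $\tfrac{p-3}{2}$ elements; if $r$ is odd no element is forbidden, while if $r$ is even then $r\le p-3$ (because $p-2$ is odd) and one merely avoids $r$ itself, leaving $\tfrac{p-3}{2}-1$ choices. Since $p\ge 7$ gives $\tfrac{p-3}{2}\ge 2$, an admissible $j_0$ always exists. For $p=5$ this argument collapses — \eqref{Eqn-2p-alt-sum} becomes $1-\beta_2+\beta_4=0$, so $\{1,\beta_2,\beta_4\}$ is actually linearly dependent — which explains the restriction to $p\ge 7$.
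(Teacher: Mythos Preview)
Your proof is correct and follows essentially the same approach as the paper's: both use the basis $\{1,\beta_1,\dots,\beta_{p-2}\}$ for the easy case $s\le p-2$, and for $s=p-1$ invoke the relation \eqref{Eqn-2p-alt-sum} to swap some $\beta_{2j_0}$ (with $2j_0\ne r$) for $\beta_{p-1}$. Your write-up is slightly more explicit in justifying the existence of an admissible $2j_0$ and in explaining why the hypothesis $p\ge 7$ is sharp.
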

  \begin{proof}
The degree of $\beta_1$ is $p-1$. For $1\leq r < s\leq p-2$, $\{1,\beta_r,\beta_s\}$ is a subset of the basis $\{1, \beta_1, \dots, \beta_{p-2}\}$.  Hence it is linearly independent over $\Q$.

Using Equation~(\ref{Eqn-2p-alt-sum}), we may replace any element in $\left\{\beta_{2j} : 1\leq j \leq \frac{p-3}{2} \text{ and } 2j \neq r\right\}$
     by $\beta_{p-1}$ in $\{1,\beta_1,\dots,\beta_{p-2}\}$ to get another basis of $F_{2p}$ containing $1$, $\beta_r$ and $\beta_{p-1}$.
  \end{proof}
  
\begin{corollary}\label{linindep} Suppose $n=p-1$ for some prime $p\geq 5$, or $n=2p-1$ for some prime $p\geq 7$. For any two non-zero 
  eigenvalues $\lambda$ and $\lambda'$ of $P_{n}$ such that $\lambda\neq\pm\lambda'$,
  the set $\{1,\lambda ,\lambda'\}$ is linearly independent over $\Q$.
\end{corollary}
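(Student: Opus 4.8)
The plan is to deduce the corollary directly from Lemmas~\ref{galoisp} and \ref{galois2p}, the only extra ingredient being the elementary fact that the eigenvalues of a path occur in pairs differing only by sign. First I would record that the eigenvalues of $P_n$ are $2\cos\frac{r\pi}{n+1}$ for $r=1,\dots,n$, and that
\begin{equation*}
2\cos\frac{(n+1-r)\pi}{n+1}=2\cos\!\left(\pi-\frac{r\pi}{n+1}\right)=-2\cos\frac{r\pi}{n+1}.
\end{equation*}
Consequently every eigenvalue of $P_n$ is of the form $\pm 2\cos\frac{r\pi}{n+1}$ with $1\le r\le\lfloor(n+1)/2\rfloor$, and the value $2\cos\frac{r\pi}{n+1}$ with $r=\tfrac{n+1}{2}$ (which occurs only when $n+1$ is even, i.e.\ $n=2p-1$) equals $0$ and is therefore excluded when we restrict to \emph{non-zero} eigenvalues.

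Since linear independence over $\Q$ of a set of real numbers is unchanged when any element is replaced by its negative (a nontrivial rational relation $a+b\lambda+c\lambda'=0$ is turned into the nontrivial relation $a-b\lambda+c\lambda'=0$, and vice versa), it suffices to prove the statement in the normalized situation $\lambda=2\cos\frac{r\pi}{n+1}$, $\lambda'=2\cos\frac{s\pi}{n+1}$. The hypothesis $\lambda\neq\pm\lambda'$ forces $r\neq s$, so after relabelling we may assume $r<s$. In the case $n=p-1$ with $p\ge 5$, the admissible range is $1\le r<s\le\frac{p-1}{2}$ (no eigenvalue is zero, as $p$ is odd), and then $\{1,\alpha_r,\alpha_s\}$ is linearly independent over $\Q$ by Lemma~\ref{galoisp}. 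In the case $n=2p-1$ with $p\ge 7$, the admissible range is $1\le r<s\le p-1$ (the eigenvalue $\beta_p=2\cos\frac{\pi}{2}=0$ is omitted), and then $\{1,\beta_r,\beta_s\}$ is linearly independent over $\Q$ by Lemma~\ref{galois2p}. Undoing the sign normalization gives the claim for $\{1,\lambda,\lambda'\}$.

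There is no real obstacle here beyond the two preceding lemmas; the points requiring a little care are purely bookkeeping: normalizing the two eigenvalues to have indices in the ``first half'' so that the sign ambiguity is removed, using $\lambda\neq\pm\lambda'$ to ensure the two indices are distinct, and checking that the unique zero eigenvalue — present only in the $n=2p-1$ family — is correctly excluded so that the remaining indices always lie within the ranges $1\le r<s\le\frac{p-1}{2}$ and $1\le r<s\le p-1$ demanded by Lemmas~\ref{galoisp} and \ref{galois2p} respectively.
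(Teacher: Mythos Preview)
Your argument is correct and is exactly the deduction the paper has in mind: the corollary is stated without proof precisely because it follows from Lemmas~\ref{galoisp} and~\ref{galois2p} via the sign symmetry $2\cos\frac{(n+1-r)\pi}{n+1}=-2\cos\frac{r\pi}{n+1}$, together with the observation that linear independence over $\Q$ is unaffected by replacing an element by its negative. Your bookkeeping on the index ranges and the exclusion of the zero eigenvalue in the $n=2p-1$ case is accurate.
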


The eigenvalues of $X$, counting multiplicity, are the $\prod_{i=1}^kn_i$ numbers
  $\lambda_1+\lambda_2+\cdots+\lambda_k$, where $\lambda_i$ is an eigenvalue of $P_{n_i}$.

\begin{lemma}\label{simple} 
Let $p_1,\dots, p_k \ge 5$ be distinct primes.  Suppose $n_i=p_i-1$ or $n_i=2p_i-1$. 
Then $X = P_{n_1}\square \cdots \square P_{n_k}$ has simple eigenvalues.
\end{lemma}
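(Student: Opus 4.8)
The plan is to show that if $\lambda_1 + \cdots + \lambda_k = \mu_1 + \cdots + \mu_k$ with each $\lambda_i, \mu_i$ an eigenvalue of $P_{n_i}$, then $\lambda_i = \mu_i$ for all $i$. Since each $n_i$ is either $p_i - 1$ or $2p_i - 1$, the field $F_{n_i+1}$ generated by the eigenvalues of $P_{n_i}$ is a subfield of the real cyclotomic field $\Q(\zeta_{2(n_i+1)})^+$; in both cases this field is ramified only at the prime $p_i$ (and possibly $2$, but the relevant ramification is at $p_i$). Because the $p_i$ are distinct, the compositum $F_{n_1+1} \cdots F_{n_k+1}$ has degree equal to the product $\prod_i |F_{n_i+1}:\Q|$ — the fields are linearly disjoint over $\Q$. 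First I would make this linear disjointness precise, using that $\Q(\zeta_m)^+$ and $\Q(\zeta_{m'})^+$ intersect in $\Q$ when $\gcd(m,m')$ is a power of $2$ (here $m = 2(n_i+1)$, so $\gcd$ is $2$ or $4$, and the $2$-part of each cyclotomic field contributes nothing beyond $\Q$ since $\Q(\zeta_4)^+ = \Q$ and the $\Q(\zeta_8)^+ = \Q(\sqrt 2)$ case needs a tiny check, but $p_i \ge 5$ keeps the odd parts coprime and that forces the intersection down to $\Q$).

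Granting linear disjointness, the key step is an elementary argument with a $\Q$-basis. For each $i$, fix the $\Q$-basis $B_i = \{1, 2\cos\frac{\pi}{n_i+1}, \ldots, 2\cos\frac{(d_i-1)\pi}{n_i+1}\}$ of $F_{n_i+1}$ from~\eqref{Eqn-basis}, where $d_i = \frac12\phi(2(n_i+1))$. By linear disjointness, the set of products $b_1 b_2 \cdots b_k$ with $b_i \in B_i$ is a $\Q$-basis of the compositum. Now each eigenvalue $\lambda_i = 2\cos\frac{r_i\pi}{n_i+1}$ of $P_{n_i}$ can be written, via $\cos\frac{r\pi}{n+1} = T_r(\cos\frac{\pi}{n+1})$ and the relation~\eqref{Eqn-p-alt-sum} or~\eqref{Eqn-2p-alt-sum} folding $\alpha_d$ (resp.\ $\beta_{p-1}$) back into the lower basis elements, uniquely in terms of $B_i$; equivalently, the map sending $\lambda_i$ to its coordinate vector in $B_i$ is injective on the eigenvalue set of $P_{n_i}$. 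Then $\lambda_1 + \cdots + \lambda_k - \mu_1 - \cdots - \mu_k = 0$ expands in the product basis, and matching coefficients forces $\lambda_i$ and $\mu_i$ to have the same $B_i$-coordinates for each $i$ — so $\lambda_i = \mu_i$. This gives simplicity.

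Alternatively, and perhaps more cleanly, I would argue by induction on $k$ using Corollary~\ref{linindep}. Write $\lambda_1 + \cdots + \lambda_k = \mu_1 + \cdots + \mu_k$, so $\lambda_1 - \mu_1 = \sum_{i=2}^k (\mu_i - \lambda_i) \in F_{n_2+1}\cdots F_{n_k+1}$. But $\lambda_1 - \mu_1 \in F_{n_1+1}$, and the intersection $F_{n_1+1} \cap (F_{n_2+1}\cdots F_{n_k+1}) = \Q$ by the ramification/coprimality argument above. So $\lambda_1 - \mu_1 \in \Q$. Since $\lambda_1, \mu_1$ are (possibly zero) eigenvalues of $P_{n_1}$ and each nonzero one has degree $d_1 \ge 2$ over $\Q$ (as $p_1 \ge 5$), the only way a difference of two of them is rational is $\lambda_1 = \mu_1$ — unless one considers $\lambda_1 = -\mu_1$, but then $\lambda_1 - \mu_1 = 2\lambda_1$ is rational only if $\lambda_1 = 0 = \mu_1$; either way $\lambda_1 = \mu_1$. (Here I should double-check the borderline primes $p_i = 5$ using Lemma~\ref{galoisp}-type input, which is why Corollary~\ref{linindep} is stated with those hypotheses.) Cancelling $\lambda_1 = \mu_1$ reduces to $k-1$ factors and the induction closes.

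The main obstacle is the linear disjointness / trivial-intersection claim for the fields $F_{n_i+1}$: one must be careful that when $n_i + 1 = 2p_i$ the relevant cyclotomic modulus is $4p_i$, so the $2$-parts of the moduli are $4$, and $\Q(\zeta_4)^+ = \Q$, so no spurious common subfield arises from the even part; combined with the distinctness of the odd primes $p_i$, the intersection of any one of these fields with the compositum of the others is $\Q$. This is standard cyclotomic theory but deserves a careful half-page. Everything after that is bookkeeping with the explicit bases and the Chebyshev/cyclotomic relations already recorded in the text.
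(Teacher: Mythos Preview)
Your second (inductive) approach is essentially the paper's proof: isolate $\lambda_1-\mu_1$, observe it lies both in $F_{n_1+1}$ and in the compositum of the remaining $F_{n_j+1}$, use the cyclotomic intersection argument to force $\lambda_1-\mu_1\in\Q$, and then invoke Corollary~\ref{linindep} to conclude $\lambda_1=\mu_1$. The paper phrases the intersection step slightly differently (both sides sit in cyclotomic fields of orders $4p_1$ and $4m$ with $\gcd(m,p_1)=1$, whose intersection is $\Q(i)$, hence $\Q$ after intersecting with $\R$), but the content is the same.

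There is, however, a real gap at the edge case you flag but do not resolve. For $p=5$ with $n=2p-1=9$, Equation~\eqref{Eqn-2p-alt-sum} reads $1-\beta_2+\beta_4=0$, so $\beta_2-\beta_4=1\in\Q$ while $\beta_2\neq\beta_4$. Thus your assertion that a rational difference of two eigenvalues of $P_{n_1}$ forces equality is \emph{false} for $P_9$, and no ``Lemma~\ref{galoisp}-type input'' repairs it: Lemma~\ref{galois2p} and Corollary~\ref{linindep} explicitly require $p\ge 7$ in the $2p-1$ case precisely because of this relation. The paper does not check this case directly; it sidesteps it. If $\lambda_i\neq\mu_i$ for some $i$, then (since the total sums agree) at least two indices satisfy $\lambda_j\neq\mu_j$; as the primes $p_j$ are distinct, at most one of them equals $5$, so one may relabel so that the isolated coordinate has $p_1\ge 7$ whenever $n_1=2p_1-1$. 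With that reordering your argument goes through and coincides with the paper's. Your first approach via an explicit product basis would also work once linear disjointness is established, but it is heavier than needed.
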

\begin{proof}
  We may assume $k>1$, since paths have simple eigenvalues.
  The eigenvalues of $X$ are the values $\lambda_1+\lambda_2+\cdots+\lambda_k$
  where $\lambda_i$ is an eigenvalue of $P_{n_i}$.
  Suppose
  \begin{equation}\label{equalsums}
    \lambda_1+\lambda_2+\cdots+\lambda_k=\lambda'_1+\lambda'_2+\cdots+\lambda'_k.
  \end{equation}
  where the $\lambda_i$ and $\lambda'_i$ are eigenvalues of $P_{n_i}$. Suppose for
  a contradiction that for some $i$ we have $\lambda_i\neq \lambda_i'$.  Then there must be another index $j$ with $\lambda_j\neq \lambda_j'$. Without loss of generality we can assume that $i=1$
  and that $p_1\ge 7$ if $n_1=2p_1-1$. 
    We rewrite  \eqref{equalsums} as
    \begin{equation}\label{equalsums2}
      ( \lambda_1'-\lambda_1)=(\lambda_2-\lambda'_2)\cdots+
      (\lambda_k-\lambda'_k).
    \end{equation}
    The left member of \eqref{equalsums2} lies in  the cyclotomic field of order $4p_1$ while
    the right member lies in a cyclotomic field of order $4m$, where $m$ is coprime to $p_1$.
    The intersection of these fields is $\Q(i)$, but we also know that the eigenvalues are real,
    so it follows that the common value of \eqref{equalsums2} must be rational.

    If, without loss of generality, $\lambda_1=0$ then $\lambda'_1 \notin \Q$ contradicting Equation~(\ref{equalsums2}). 
    Otherwise, as $\lambda_1\notin\Q$, we cannot have $\lambda_1'= - \lambda_1$.  Then  by Corollary~\ref{linindep}, the set $\{1,\lambda_1,\lambda_1'\}$
    is linearly independent over $\Q$, contradicting the rationality of $\lambda_1'-\lambda_1$ in \eqref{equalsums2}.  Hence $X$ has simple eigenvalues. 
\end{proof}

\begin{lemma}\label{simpleY} Let $X$ be as in Lemma~\ref{simple} and let
$Y=P_{2^e-1}\square X$, $e\ge 2$. Then the eigenvalues of $Y$ are
simple.
\end{lemma}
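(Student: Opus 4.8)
The plan is to follow the proof of Lemma~\ref{simple}, first splitting off the contribution of the new factor $P_{2^e-1}$ by a cyclotomic field argument and then invoking Lemma~\ref{simple} for the remaining product $X$. We may assume $X$ is a product of at least one path, since otherwise $Y$ is a path and hence has simple eigenvalues. The eigenvalues of $Y$, counting multiplicity, are the numbers $\mu+\lambda_1+\cdots+\lambda_k$, where $\mu$ is an eigenvalue of $P_{2^e-1}$ and $\lambda_i$ is an eigenvalue of $P_{n_i}$. Assume two such expressions coincide, $\mu+\sum_{i=1}^k\lambda_i=\mu'+\sum_{i=1}^k\lambda_i'$; rearranging gives $\mu-\mu'=\sum_{i=1}^k(\lambda_i'-\lambda_i)$.

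The first step is to show $\mu=\mu'$. Since $2\cos(r\pi/2^e)$ is a sum of two $2^{e+1}$-th roots of unity, the left-hand side lies in the $2^{e+1}$-th cyclotomic field. Each $\lambda_i$ is an eigenvalue of $P_{n_i}$ with $n_i\in\{p_i-1,2p_i-1\}$, hence lies in the $4p_i$-th cyclotomic field, so the right-hand side lies in the $4m$-th cyclotomic field with $m=p_1\cdots p_k$ odd. As $e\ge 2$, we have $\gcd(2^{e+1},4m)=4$, so the intersection of these two cyclotomic fields is the $4$-th cyclotomic field $\Q(i)$; since all eigenvalues involved are real, the common value $\mu-\mu'$ lies in $\Q(i)\cap\R=\Q$.

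It remains to rule out $\mu\ne\mu'$, that is, to establish the analogue of Corollary~\ref{linindep} for the prime $2$: the difference of two distinct eigenvalues of $P_{2^e-1}$ is never a nonzero rational. Write $\mu_r=2\cos(r\pi/2^e)$ and $d=\frac{1}{2}\phi(2^{e+1})=2^{e-1}\ge 2$; by~\eqref{Eqn-basis} the set $\{1,\mu_1,\dots,\mu_{d-1}\}$ is linearly independent over $\Q$. Since $\mu_{2^e-r}=-\mu_r$ and $\mu_{2^{e-1}}=0$, every eigenvalue of $P_{2^e-1}$ is $0$ or $\pm\mu_r$ with $1\le r\le d-1$; hence the difference of two distinct eigenvalues is, up to sign, one of $\mu_r$, $2\mu_r$, or $\pm\mu_r\pm\mu_s$ with $r\ne s$, each of which is irrational by the linear independence above. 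This contradicts $\mu-\mu'\in\Q$ unless $\mu=\mu'$. Finally, $\mu=\mu'$ gives $\sum_i\lambda_i=\sum_i\lambda_i'$, whereupon Lemma~\ref{simple} forces $\lambda_i=\lambda_i'$ for all $i$, so $Y$ has simple eigenvalues.

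The step requiring the most care is the cyclotomic intersection: it is precisely the separation of the $2$-power order $2^{e+1}$ of the cyclotomic field attached to $P_{2^e-1}$ from the order $4m$ with $m$ odd of the one attached to the remaining factors, together with the hypothesis $e\ge 2$ (which prevents $P_{2^e-1}$ from contributing a nonzero rational eigenvalue), that makes the argument work. The rest is the same bookkeeping as in the proof of Lemma~\ref{simple}.
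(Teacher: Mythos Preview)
Your proof is correct and reaches the same conclusion as the paper, with the same initial step (the cyclotomic intersection showing $\mu-\mu'\in\Q$), but the endgame is genuinely different and in fact more elementary. The paper, after reducing to $\lambda_0-\lambda_0'\in\Q$ with both irrational, analyzes the subfield lattice of $F_{2^e}$ via Galois theory: it distinguishes the cases $\Q(\lambda_0)\neq\Q(\lambda_0')$ (where $\{1,\lambda_0,\lambda_0'\}$ is linearly independent) and $\Q(\lambda_0)=\Q(\lambda_0')$, and in the latter case exhibits a Galois automorphism sending $\lambda_0\mapsto-\lambda_0$ and $\lambda_0'\mapsto-\lambda_0'$, which negates the nonzero rational $\lambda_0-\lambda_0'$. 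You instead observe that every eigenvalue of $P_{2^e-1}$ is $0$ or $\pm\mu_r$ with $1\le r\le 2^{e-1}-1$, so any difference of two distinct eigenvalues is a nontrivial $\Z$-combination of $\mu_1,\dots,\mu_{2^{e-1}-1}$, hence irrational by the basis~\eqref{Eqn-basis}. Your route avoids the Galois machinery entirely and is shorter; the paper's argument, on the other hand, gives slightly more structural information (it identifies $\Q(\lambda_0)$ explicitly and the automorphism $\om\mapsto\om^{2^d+1}$), which is not needed here but could be useful elsewhere. One minor quibble: your closing parenthetical about $e\ge 2$ ``preventing a nonzero rational eigenvalue'' is not quite the point---the only rational eigenvalue of $P_{2^e-1}$ is $0$ for every $e\ge 1$; the hypothesis $e\ge 2$ is really what makes $d=2^{e-1}\ge 2$, so that the basis~\eqref{Eqn-basis} actually contains some $\mu_r$ and your linear-independence argument has content.
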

\begin{proof}
    The eigenvalues of $Y$, counting multiplicity are the $(2^e-1)\prod_{i=1}^k(n_i)$
complex numbers $\lambda_0+\lambda_i+\cdots+\lambda_k$,
where $\lambda_0$ is an eigenvalue of $P_{2^e-1}$ and for $1\leq i\leq k$
$\lambda_i$ is an eigenvalue of $P_{n_i}$.
Let  $\lambda'_i$, $i=0$,$1$,\dots, $k$ similarly denote eigenvalues and consider the equation
  \begin{equation}\label{equalevals}    \lambda_0+\lambda_1+\lambda_2+\cdots+\lambda_k=\lambda_0'+\lambda'_1+\lambda'_2+\cdots+\lambda'_k.
  \end{equation}
  We shall show that $\lambda_i=\lambda'_i$ for all i. 
  Suppose $\lambda_0=\lambda'_0$.
  Then we can cancel these terms and we have an equation expressing equality
  of two eigenvalues of $X$. So Lemma~\ref{simple} gives the desired conclusion. Therefore we may assume
  $\lambda_0\neq\lambda'_0$. 
 
  Let $F_{2^e}$
  denote the field generated by the eigenvalues of $P_{2^e-1}$. This is the
  intersection of the real field with
  the cyclotomic field $Q(\om)$,
  where $\om=e^{\frac{i\pi}{2^e}}$ is a
  primitive $2^{e+1}$-th root of unity.
Let $F_X$ denote the field generated 
by the eigenvalues of the path factors in $X$. Then $F_X$ lies in a cyclotomic 
field of order $4m$, where $m$ is odd.
It follows that $F_{2^e}\cap F_X=\Q$.
Thus, if we rearrange \eqref{equalevals} 
by isolating $\lambda_0-\lambda'_0$,
we see that $\lambda_0-\lambda'_0\in\Q$.
The eigenvalues of $P_{2^e-1}$ are
$\om^r+\om^{-r}$, for $r=1$, \dots, $2^e-1$. We assume for a contradiction that  $\lambda_0\neq\lambda'_0$.
The only rational eigenvalue is zero, for $r=2^{e-1}$, so $\lambda_0$ and 
$\lambda'_0$ must both be irrational.

  The Galois group of $Q(\om)$ over $\Q$
  consists of the $2^e$ automorphisms
  of the form $\om\mapsto \om^a$, where 
  $a\in \Z/2^{e+1}\Z$ is odd. 
  The Galois automorphisms 
 of $F_{2^e}=\Q(\om+\om^{-1})$ are obtained by restriction of those for $Q(\om)$, and form a cyclic group of order
 $2^{e-1}$.  By Galois theory,
 the subfields of $F_{2^e}$ correspond
  bijectively with the subgroups  of the Galois group. Thus, for each $d$ with
  $1\le d\leq e$ there is a unique
  subfield of degree $2^{d-1}$, and this subfield must be the field 
  $F_{2^d}=\Q(\om^{2^{e-d}})\cap\R$,
  as this field has the right degree.
  An eigenvalue $\om^r+\om^{-r}$ is
  Galois conjugate to $\om^{2^d}+\om^{-2^d}$, where $2^d$
  divides $r$ exactly, so 
  $Q(\om^r+\om^{-r})=F_{2^d}$.
  Suppose $\Q(\lambda_0)\neq\Q(\lambda'_0)$.
  Then $\{1,\lambda_0,\lambda'_0\}$
  is linearly independent over $\Q$,
  a contradiction. Therefore we may
  assume $\Q(\lambda_0)=\Q(\lambda'_0)$.
  Thus, $\lambda_0=\om^r+\om^{-r}$
  and $\lambda'_0=\om^s+\om^{-s}$, where $1\leq r,s\leq 2^e-1$
  and $r$ and $s$  are exactly divisible by the same power of 2,
  say $2^d$.  In other words $\om^r$ and $\om^s$ are both odd powers
  of $\om^{2^{e-d}}$, a primitive $2^{d+1}$-th root of unity.
  Now $2^d+1$ is odd and the $(2^d+1)$-th power of $\om^{2^{e-d}}$  is $-\om^{2^{e-d}}$.
  It follows that there is  Galois automorphism of $Q(\lambda_0)$
  sending $\lambda_0$ to $-\lambda_0$ and $\lambda'_0$ to $-\lambda'_0$.
  Since $\lambda_0-\lambda'_0$ is a nonzero rational number, we have
  our final contradiction. This proves that $Y$ has simple eigenvalues.
\end{proof}

From Lemmas~\ref{simple}, \ref{simpleY} and \ref{simplesc}, we draw
the following conclusion.
\begin{corollary}\label{Lemma-scprods}
Let $p_1,\dots, p_k \ge 5$ be distinct primes.  Suppose $n_i=p_i-1$ or $n_i=2p_i-1$. Further, let $n_0=2^e-1$, $e\ge2$. 
Then in $X = P_{n_1}\square \cdots \square P_{n_k}$ and $Y=P_{n_0}\square X$, all corners are mutually strongly cospectral.
\end{corollary}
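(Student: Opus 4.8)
The plan is to assemble the statement directly from the three lemmas it cites, since the substantive work has already been done. First I would recall that $\Aut(X)$ acts transitively on the $2^k$ corners of $X$, and likewise $\Aut(Y)$ acts transitively on the $2^{k+1}$ corners of $Y$: any corner is carried to any other by a suitable product of end-to-end reflections, one in each path factor. A graph automorphism sending a vertex $a$ to a vertex $b$ commutes with every spectral idempotent $E_r$ and hence gives $(E_r)_{a,a}=(E_r)_{b,b}$, so transitivity yields that the corners are mutually cospectral --- this is exactly the content of the earlier lemma on the automorphism group.

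Next I would apply Lemma~\ref{simple}: under the hypotheses that $p_1,\dots,p_k\ge 5$ are distinct and $n_i\in\{p_i-1,\,2p_i-1\}$, the graph $X=P_{n_1}\square\cdots\square P_{n_k}$ has simple eigenvalues. Then Lemma~\ref{simpleY} gives the same conclusion for $Y=P_{n_0}\square X$ with $n_0=2^e-1$ and $e\ge 2$. No separate argument is needed when $k\le 1$: a single path has simple eigenvalues, and for $k=1$ the graph $Y$ is the two-factor product already handled by Lemma~\ref{simpleY}.

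Finally, Lemma~\ref{simplesc} states that in a graph with simple eigenvalues cospectral vertices are automatically strongly cospectral; concretely, each $E_r$ is then a rank-one symmetric projector, so the equality $(E_r)_{a,a}=(E_r)_{b,b}$ forces $E_re_a=\pm E_re_b$ for every $r$. Applying this to the corners of $X$ and to the corners of $Y$, which were shown to be mutually cospectral in the first step, gives that all corners of $X$, and all corners of $Y$, are mutually strongly cospectral, which is the assertion of the corollary.

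I do not expect a genuine obstacle at this stage: the delicate arguments --- ruling out accidental equalities among sums of eigenvalues of the path factors by means of the disjointness of the underlying cyclotomic fields and the computation of their intersections with $\Q$ --- are precisely the content of Lemmas~\ref{simple} and~\ref{simpleY}, which may be assumed here. Given those, the corollary follows in a few lines.
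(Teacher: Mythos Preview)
Your proposal is correct and matches the paper's approach exactly: the paper simply states that the corollary follows from Lemmas~\ref{simple}, \ref{simpleY} and \ref{simplesc} (together with the earlier transitivity lemma giving cospectrality of the corners) and marks it with a \qed. Your write-up is just a slightly expanded version of this same three-step deduction.
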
\qed

The following results imply that if the primes in the above corollary are not distinct, then the corners of $X=P_{n_1}\square P_{n_2}\square\cdots\square P_{n_k}$ are not mutually strongly cospectral.

\begin{lemma}\label{Lemma-prod-idem}
Let $y_1$ and $y_2$ be vertices in a graph $Y$ and $E$ be an idempotent projector for some eigenspace of $Y$.
Let $z$ be a vertex in a graph $Z$ and $\mu \in \Phi_z$ with corresponding idempotent projector $F$.

If $W$ is an idempotent projector for an eigenspace of $Y \square Z$ such that $(E\otimes F)W \neq 0$ and 
$W e_{(y_1,z)} = \alpha W e_{(y_2,z)}$ for some $\alpha\in\C$,  then $E e_{y_1} = \alpha E e_{y_2}$.
\end{lemma}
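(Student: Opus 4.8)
The plan is to use the standard description of the eigenprojectors of a Cartesian product in terms of tensor products of the eigenprojectors of the factors, together with the orthogonality relations among spectral idempotents. Write the spectral decompositions $A_Y=\sum_r\theta_rE_r$ and $A_Z=\sum_s\mu_sF_s$, so that $E=E_{r_0}$ for the eigenvalue $\theta:=\theta_{r_0}$ of $Y$, and $F=F_{s_0}$ with $\mu=\mu_{s_0}$. Since $A_{Y\square Z}=A_Y\otimes I+I\otimes A_Z$, the projector onto the $\lambda$-eigenspace of $Y\square Z$ is $W_\lambda=\sum_{\theta_r+\mu_s=\lambda}E_r\otimes F_s$, and $W=W_\lambda$ for some eigenvalue $\lambda$.

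The key computation is $(E\otimes F)W$. Using $E_rE_{r'}=\delta_{rr'}E_r$, $F_sF_{s'}=\delta_{ss'}F_s$ and $(P\otimes Q)(R\otimes S)=PR\otimes QS$, every term $E_r\otimes F_s$ of $W_\lambda$ with $(r,s)\neq(r_0,s_0)$ is killed by $E\otimes F$, while the term $E_{r_0}\otimes F_{s_0}$, which appears in $W_\lambda$ precisely when $\theta+\mu=\lambda$, is fixed by it. Hence $(E\otimes F)W$ equals $E\otimes F$ when $\theta+\mu=\lambda$ and is $0$ otherwise. The hypothesis $(E\otimes F)W\neq0$ therefore forces $\theta+\mu=\lambda$ and $(E\otimes F)W=E\otimes F$.

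Now apply $E\otimes F$ to both sides of $We_{(y_1,z)}=\alpha\,We_{(y_2,z)}$. Since $e_{(y_i,z)}=e_{y_i}\otimes e_z$, this yields $(E\otimes F)(e_{y_1}\otimes e_z)=\alpha\,(E\otimes F)(e_{y_2}\otimes e_z)$, that is, $(Ee_{y_1})\otimes(Fe_z)=\alpha\,(Ee_{y_2})\otimes(Fe_z)$, or equivalently $\bigl(Ee_{y_1}-\alpha\,Ee_{y_2}\bigr)\otimes Fe_z=0$. Finally, $\mu\in\Phi_z$ means exactly that $Fe_z\neq0$, so the second tensor factor is nonzero and may be cancelled, giving $Ee_{y_1}=\alpha\,Ee_{y_2}$. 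The argument is essentially bookkeeping; the only step requiring care is identifying $W$ among the product eigenprojectors and deriving the resulting dichotomy for $(E\otimes F)W$, so I do not anticipate a real obstacle.
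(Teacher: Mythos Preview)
Your proof is correct and follows essentially the same approach as the paper: expand $W$ as a sum of tensor products $E_r\otimes F_s$ over pairs with $\theta_r+\mu_s=\lambda$, use orthogonality of the spectral idempotents to show $(E\otimes F)W=E\otimes F$, apply this to both sides of $We_{(y_1,z)}=\alpha We_{(y_2,z)}$, and cancel the nonzero tensor factor $Fe_z$. If anything, your write-up is more explicit than the paper's in justifying $(E\otimes F)W=E\otimes F$ and in invoking $\mu\in\Phi_z$ to ensure $Fe_z\neq0$.
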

\begin{proof}
Since $(E\otimes F)W\neq 0$, we can write 
\begin{equation*}
W = E\otimes F + \sum_{j} E_{r_j} \otimes F_{s_j}
\end{equation*}
where the $E_{r_j}$'s and the $F_{s_j}$'s are idempotent projectors for $Y$ and $Z$,
respectively, different from $E$ and $F$. 

Multiplying $E\otimes F$ on the left to both sides of $W e_{(y_1,z)} = \alpha W e_{(y_2,z)}$ gives
\begin{equation*}
E e_{y_1} \otimes F e_{z} = \alpha E e_{y_2} \otimes F e_{z},
\end{equation*}
and $E e_{y_1}= \alpha E e_{y_2}$.
\end{proof}

\begin{corollary}\label{Lemma-prod-sc}
Let  $y_1$ and $y_2$ be vertices in a graph $Y$ and let $z$ be a vertex in a graph $Z$.
If $(y_1, z)$ is strongly cospectral to $(y_2,z)$ in $Y\square Z$, then $y_1$ is strongly cospectral to $y_2$ in $Y$.
\end{corollary}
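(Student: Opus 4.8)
The plan is to deduce Corollary~\ref{Lemma-prod-sc} from Lemma~\ref{Lemma-prod-idem} by applying the latter to each eigenspace of $Y$ individually. First I would fix an arbitrary idempotent projector $E$ for an eigenspace of $Y$, say with eigenvalue $\lambda$, and aim to show $E e_{y_1} = \pm E e_{y_2}$. To invoke Lemma~\ref{Lemma-prod-idem} I need three ingredients: an eigenvalue $\mu$ of $Z$ lying in the eigenvalue support $\Phi_z$ of the vertex $z$, with projector $F$; an eigenspace projector $W$ of $Y\square Z$ with $(E\otimes F)W\neq 0$; and the relation $W e_{(y_1,z)} = \alpha W e_{(y_2,z)}$ for some scalar $\alpha$.

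The choice of $\mu$ and $W$ is the crux. Pick any $\mu\in\Phi_z$ (this set is nonempty since $\sum_r F_r e_z = e_z \neq 0$), with projector $F$, so that $F e_z \neq 0$ by definition of the support. The eigenspaces of $Y\square Z$ are indexed by sums $\lambda + \mu'$ over eigenvalues $\lambda$ of $Y$ and $\mu'$ of $Z$, and the projector onto the $(\lambda+\mu)$-eigenspace of $Y\square Z$ is exactly $W = \sum E_r \otimes F_s$, the sum over all pairs with $\theta_r + \eta_s = \lambda + \mu$; in particular $E\otimes F$ is one of the summands, so $(E\otimes F)W = E\otimes F \neq 0$. (Here I am using that $E\otimes F$ is a nonzero orthogonal projection because $E$ and $F$ are.) Since $(y_1,z)$ and $(y_2,z)$ are strongly cospectral in $Y\square Z$, we have $W e_{(y_1,z)} = \pm W e_{(y_2,z)}$, which is the required relation with $\alpha = \pm 1$. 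Lemma~\ref{Lemma-prod-idem} then yields $E e_{y_1} = \pm E e_{y_2}$ with the same sign. Since $E$ was an arbitrary eigenspace projector of $Y$, this establishes strong cospectrality of $y_1$ and $y_2$ in $Y$.

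The main obstacle, and really the only point requiring care, is verifying that the projector onto a given eigenspace of $Y\square Z$ decomposes as claimed and, specifically, that $E\otimes F$ genuinely appears as a block in some such $W$ — i.e. that whatever eigenvalue $\lambda$ carries $E$, the sum $\lambda+\mu$ is an eigenvalue of $Y\square Z$ with $E\otimes F$ among the tensor-product summands of its projector. This follows immediately from the fact that $A_{Y\square Z} = A_Y\otimes I + I\otimes A_Z$ and the spectral decompositions of the two factors multiply out, but it is the step that makes the whole argument work and should be stated explicitly. Everything else is a direct citation of Lemma~\ref{Lemma-prod-idem} and of the definitions of strong cospectrality and eigenvalue support.
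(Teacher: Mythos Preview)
Your proposal is correct and is exactly the intended derivation: the paper states the corollary without proof, treating it as an immediate consequence of Lemma~\ref{Lemma-prod-idem}, and your argument spells out precisely that deduction. The only nuance worth noting is that if $W e_{(y_1,z)}=W e_{(y_2,z)}=0$ the relation holds with any $\alpha$, but then the proof of Lemma~\ref{Lemma-prod-idem} (using $F e_z\neq0$) still forces $E e_{y_1}=E e_{y_2}=0$, so strong cospectrality is preserved in that case as well.
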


\begin{lemma}\label{Lemma-gcd>=3}
If $\gcd(n+1,m+1) \geq  3$, then $(1,1)$ and $(n,1)$ are not strongly cospectral vertices in $P_n \square P_m$.
\end{lemma}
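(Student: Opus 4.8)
The plan is to reduce strong cospectrality of the pair to a parity condition on the repeated eigenvalues of $P_n\square P_m$, and then to manufacture a repeated eigenvalue that violates it. Write the eigenvectors of $P_n$ as $v_r$ with $v_r(j)=\sin\frac{rj\pi}{n+1}$, and let $E_r$ be the projector of $P_n$ for $\lambda_r=2\cos\frac{r\pi}{n+1}$ (and $F_s$ the analogue for $P_m$). The endpoint identity $v_r(n)=(-1)^{r+1}v_r(1)$ gives $E_re_n=(-1)^{r+1}E_re_1$. A projector $W$ of $P_n\square P_m$ for an eigenvalue $\nu$ is the sum of $E_r\otimes F_s$ over all pairs with $\lambda_r+\mu_s=\nu$, so that $We_{(1,1)}=\sum E_re_1\otimes F_se_1$ while $We_{(n,1)}=\sum(-1)^{r+1}E_re_1\otimes F_se_1$. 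Since the endpoint $1$ of each path has full eigenvalue support every coefficient is nonzero, and the vectors $E_re_1\otimes F_se_1$, lying in distinct eigenspaces, are linearly independent. Hence $We_{(1,1)}=\pm We_{(n,1)}$ holds if and only if the sign $(-1)^{r+1}$ is constant over all pairs contributing to $\nu$. Thus I only need to produce one repeated eigenvalue admitting two representations $\lambda_r+\mu_s=\lambda_{r'}+\mu_{s'}$ with $r\not\equiv r'\pmod 2$.

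To locate such a coincidence I would exploit the shared‑eigenvalue structure forced by the hypothesis. Writing $n+1=da$, $m+1=db$ with $d=\gcd(n+1,m+1)$ and $\gcd(a,b)=1$, the eigenvalues common to $P_n$ and $P_m$ are exactly $2\cos\frac{t\pi}{d}$ for $t=1,\dots,d-1$, occurring in $P_n$ at index $at$ and in $P_m$ at index $bt$; this rests on the fact that $F_{n+1}\cap F_{m+1}$ is the field $\Q(2\cos\frac{\pi}{d})$, which I would record first from the cyclotomic descriptions of the two fields. Since $d\ge 3$, both $t=1$ and $t=2$ are available, so the product eigenvalue $\nu=2\cos\frac{\pi}{d}+2\cos\frac{2\pi}{d}$ has the two distinct representations $(r,s)=(a,2b)$ and $(r',s')=(2a,b)$, the index bounds $2a\le n$ and $2b\le m$ following from $d\ge 3$. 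Thus $\nu$ is a repeated eigenvalue whose two first‑coordinate indices are $a$ and $2a$.

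The crux, and the main obstacle, is the parity of these two indices: $a$ and $2a$ have opposite parity precisely when $a=(n+1)/d$ is odd, and in that case the reduction of the first paragraph immediately yields that $(1,1)$ and $(n,1)$ are not strongly cospectral, as claimed. Controlling this parity is the real content, because the separation genuinely cannot be achieved in the first coordinate when $a$ is even: for instance in $P_5\square P_8$ the only coincidence sits at the eigenvalue $0$ with both first indices even, and there $(1,1),(5,1)$ are in fact strongly cospectral. I would therefore establish the stated pair in the case $(n+1)/d$ odd, and handle the complementary case by the same construction read in the other factor. Indeed, $\gcd(a,b)=1$ forces $b=(m+1)/d$ to be odd whenever $a$ is even, and then the representations $(a,2b),(2a,b)$ have second‑coordinate indices $2b,b$ of opposite parity, so the symmetric argument shows $(1,1)$ and $(1,m)$ are not strongly cospectral. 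In either case the corners of $P_n\square P_m$ fail to be mutually strongly cospectral, which is exactly what the ensuing corollary needs; the literal pair $(1,1),(n,1)$ is the one that fails precisely when the cofactor $(n+1)/d$ is odd.
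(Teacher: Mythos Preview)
Your approach is the same as the paper's: both construct the repeated eigenvalue $2\cos\frac{\pi}{d}+2\cos\frac{2\pi}{d}$ via the two representations $(a,2b)$ and $(2a,b)$ (the paper writes $h_n,h_m$ for your $a,b$), and both conclude from the opposite parity of $a$ and $2a$ when $a$ is odd. The paper packages your linear-independence step as the preliminary Lemma~\ref{Lemma-prod-idem}, but the content is identical.

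Your central observation, however, is correct and sharper than the paper: the lemma as literally stated is false. The paper's proof reads ``without loss of generality, we assume $h_n$ is odd,'' but this is not without loss of generality for the specific ordered pair $(1,1),(n,1)$ named in the statement---swapping the roles of the factors instead shows that $(1,1)$ and $(1,m)$ fail to be strongly cospectral. Your counterexample $P_5\square P_8$ (with $d=3$, $a=2$, $b=3$) is valid: since $F_6\cap F_9=\Q$, any coincidence $\lambda_r+\mu_s=\lambda_{r'}+\mu_{s'}$ forces $\lambda_r-\lambda_{r'}\in\Q$, and the only rational eigenvalue differences of $P_5$ matching a rational eigenvalue difference of $P_8$ give the single repeated eigenvalue $0=\lambda_2+\mu_6=\lambda_4+\mu_3$, where both first indices are even. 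Hence $(1,1)$ and $(5,1)$ \emph{are} strongly cospectral in $P_5\square P_8$, contradicting the lemma as written.

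What holds unconditionally---and what Corollary~\ref{Corollary-gcd3-sc} actually needs---is that \emph{some} pair of adjacent corners fails to be strongly cospectral, namely the pair in whichever factor has odd cofactor $(n+1)/d$ or $(m+1)/d$ (at least one is odd since $\gcd(a,b)=1$). Your write-up makes this case split explicit; the paper's ``WLOG'' silently proves only this weaker statement.
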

\begin{proof}
We use $E_r$ to denote the idempotent projector onto the $\left(2\cos \frac{r\pi}{n+1}\right)$-eigenspace of $A(P_n)$,
and $F_s$ to denote the idempotent projector onto the $\left(2\cos \frac{s\pi}{m+1}\right)$-eigenspace of $A(P_m)$.

Let $g=\gcd(n+1, m+1)$, $h_n = \frac{n+1}{g}$ and $h_m=\frac{m+1}{g}$.  Without loss of generality, we assume $h_n$ is odd.
Then 
\begin{equation*}
\theta = 2\cos\left(\frac{h_n\pi}{n+1}\right)+2\cos \left(\frac{2h_m\pi}{m+1}\right) = 2\cos \left(\frac{2h_n\pi}{n+1}\right)+2\cos \left(\frac{h_m\pi}{m+1}\right)
\end{equation*} 
is an eigenvalue of $P_n \square P_m$.
If $W$ is the idempotent projector of the $\theta$-eigenspace of $P_n \square P_m$, then
both $(E_{h_n}\otimes F_{2h_m} )W$ and $(E_{2h_n}\otimes F_{h_m}) W$ are non-zero.
As $E_{h_n}e_1 = E_{h_n}e_n$ and $E_{2h_n}e_1 = -E_{2h_n}e_n$, Lemma~\ref{Lemma-prod-idem} implies $W e_{(1,1)} \neq \pm W e_{(n,1)}$.
\end{proof}

\begin{corollary}\label{Corollary-gcd3-sc}
If $\gcd(n_i+1, n_j+1) \geq 3$, for some $1\leq i < j \leq n$, then the corners of $P_{n_1}\square P_{n_2}\square\cdots\square P_{n_k}$ are not mutually strongly cospectral.
\end{corollary}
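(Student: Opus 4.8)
The plan is to reduce to the two-factor case already settled in Lemma~\ref{Lemma-gcd>=3}, using Corollary~\ref{Lemma-prod-sc} to peel off the irrelevant factors. Since permuting the cartesian factors of $X=P_{n_1}\square\cdots\square P_{n_k}$ gives a graph isomorphism that permutes the set of corners, I may assume the pair realizing the hypothesis is $\{1,2\}$, that is, $\gcd(n_1+1,n_2+1)\ge 3$. I then write $X=Y\square Z$, where $Y=P_{n_1}\square P_{n_2}$ and $Z=P_{n_3}\square\cdots\square P_{n_k}$; when $k=2$ one takes $Z$ to be the one-vertex graph, so that $X=Y$ and the statement is literally Lemma~\ref{Lemma-gcd>=3}.

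First I would fix any corner $z$ of $Z$ and consider the two vertices of $X$ given by $\big((1,1),z\big)$ and $\big((n_1,1),z\big)$; both are corners of $X$, since $(1,1)$ and $(n_1,1)$ are corners of $Y$ and $z$ is a corner of $Z$. Assume for a contradiction that all corners of $X$ are mutually strongly cospectral. In particular $\big((1,1),z\big)$ and $\big((n_1,1),z\big)$ are strongly cospectral in $Y\square Z$. Applying Corollary~\ref{Lemma-prod-sc} with $y_1=(1,1)$, $y_2=(n_1,1)$ and this fixed $z$, I conclude that $(1,1)$ and $(n_1,1)$ are strongly cospectral in $Y=P_{n_1}\square P_{n_2}$.

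This contradicts Lemma~\ref{Lemma-gcd>=3}, because $\gcd(n_1+1,n_2+1)\ge 3$. Hence not all corners of $X$ can be mutually strongly cospectral, which is the assertion. The proof is just an assembly of the two preceding results, so there is no real obstacle; the only point that needs care is the choice of the two corners: they must differ in a single path-coordinate belonging to one of the two ``bad'' factors while agreeing in every other coordinate, so that Corollary~\ref{Lemma-prod-sc} (common $Z$-component) and Lemma~\ref{Lemma-gcd>=3} (common $P_{n_2}$-component at an end) both apply verbatim. This is exactly why the second coordinate is frozen at the end $1$ of $P_{n_2}$ and the $Z$-coordinate is frozen at the corner $z$.
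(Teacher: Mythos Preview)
Your proof is correct and is exactly the argument the paper intends: the corollary is stated without proof precisely because it follows immediately from Lemma~\ref{Lemma-gcd>=3} and Corollary~\ref{Lemma-prod-sc} in the way you spell out. The only addition you supply is the routine reduction (permute factors, group as $Y\square Z$, pick a common $Z$-corner), which the paper leaves implicit.
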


\section{No pretty good state transfer among all corners}\label{nopgstsection}
We shall identify pairs of paths whose cartesian product does not have all four corners in the same PGST-equivalence class. 
If any of these pairs appears as factors in $P_{n_1}\square \dots \square P_{n-k}$, then Lemma~\ref{Lemma-1factor} impllies that the corners of $P_{n_1}\square \dots \square P_{n-k}$ are not in the same PGST-equivalence class.

The following lemma covers $P_{p-1}\square P_{p-1}$, $P_{p-1}\square P_{2p-1}$, $P_{2p-1}\square P_{2p-1}$, and $P_{2^e-1}\square P_{2^f-1}$, for $e, f\geq 2$.

\begin{lemma}\label{Lemma-gcd3-pgst}
If $\gcd(n+1, m+1) \geq 3$, then the four corners of $P_n\square P_m$ do not form a PGST-equivalence class.
\end{lemma}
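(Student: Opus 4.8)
The plan is to use the PGST criterion (Theorem~\ref{PGSTcriterion}) and rule out PGST between the two corners $(1,1)$ and $(n,1)$ of $P_n \square P_m$. Actually, since Corollary~\ref{Corollary-gcd3-sc} already tells us that when $\gcd(n+1,m+1)\geq 3$ the corners $(1,1)$ and $(n,1)$ fail to be strongly cospectral, and strong cospectrality is a necessary condition for PGST, condition (a) of Theorem~\ref{PGSTcriterion} is violated for this particular pair. Hence $(1,1)$ and $(n,1)$ are not in the same PGST-equivalence class, so the four corners cannot all lie in a single PGST-equivalence class.

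More explicitly, first I would invoke Lemma~\ref{Lemma-gcd3-sc} (equivalently its specialization in the proof of Lemma~\ref{Lemma-gcd>=3}): with $g=\gcd(n+1,m+1)\geq 3$, $h_n=(n+1)/g$, $h_m=(m+1)/g$, and WLOG $h_n$ odd, the eigenvalue $\theta = 2\cos(h_n\pi/(n+1)) + 2\cos(2h_m\pi/(m+1))$ of $P_n\square P_m$ has a projector $W$ with both $(E_{h_n}\otimes F_{2h_m})W\neq 0$ and $(E_{2h_n}\otimes F_{h_m})W\neq 0$. Since $E_{h_n}e_1 = E_{h_n}e_n$ while $E_{2h_n}e_1 = -E_{2h_n}e_n$, Lemma~\ref{Lemma-prod-idem} forces $W e_{(1,1)} \neq \pm W e_{(n,1)}$, so $(1,1)$ and $(n,1)$ are not strongly cospectral.

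Then I would conclude: by the theorem of Godsil (\cite{G12}) cited in Section~2, strong cospectrality is necessary for pretty good state transfer, so there is no PGST between $(1,1)$ and $(n,1)$. Since PGST-equivalence is an equivalence relation, if all four corners belonged to one PGST-equivalence class then in particular $(1,1)$ and $(n,1)$ would be PGST-related — a contradiction. Therefore the four corners of $P_n\square P_m$ do not form a PGST-equivalence class.

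There is essentially no obstacle here; the lemma is a direct corollary of the strong-cospectrality obstruction already established in Lemma~\ref{Lemma-gcd>=3} combined with the necessity of strong cospectrality for PGST. The only thing to be careful about is the degenerate case analysis — one should note that the construction of $\theta$ and the nonvanishing of the two tensor-projector products go through as in the proof of Lemma~\ref{Lemma-gcd>=3} without extra hypotheses on $n,m$ beyond $\gcd(n+1,m+1)\geq 3$, and in particular the needed indices $h_n, 2h_n, h_m, 2h_m$ all lie in the valid ranges $1\le h_n, 2h_n \le n$ and $1 \le h_m, 2h_m \le m$ precisely because $g \geq 3$.
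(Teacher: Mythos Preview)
Your proposal is correct and follows exactly the paper's approach: the paper's proof is the one-line observation that the result follows immediately from Corollary~\ref{Corollary-gcd3-sc} and Theorem~\ref{PGSTcriterion}, which is precisely what you do (with the extra detail of unpacking Lemma~\ref{Lemma-gcd>=3}). The only slip is that you reference a nonexistent ``Lemma~\ref{Lemma-gcd3-sc}'' where you mean Corollary~\ref{Corollary-gcd3-sc}.
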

\begin{proof}
    It follows immediately from Corollary~\ref{Corollary-gcd3-sc} and Theorem~\ref{PGSTcriterion}.
\end{proof}

For each pair of paths considered in this section, we give a sequence of integer $\{\ell_{rs}\}$ that satisfies the three conditions in Theorem~\ref{PGSTcriterion}(b) using the $2\times 2$ matrices
\begin{equation} \label{ABCmatrix}
A=\begin{bmatrix}1&0\\-2&1\end{bmatrix}, \quad
B=\begin{bmatrix}1&-1\\-2&2\end{bmatrix} \quad \text{and}\quad
C=\begin{bmatrix}-1&2\\1&-2\end{bmatrix}. 
\end{equation}

\subsection{$P_{p_1-1}\square P_{p_2-1}$}

For $i=1,2$, let $p_i$ be an odd prime and $\alpha^{(i)}_r = 2\cos \frac{r\pi}{p_i}$, it follows from
Equation~(\ref{Eqn-p-alt-sum}) and $\alpha^{(i)}_r= -\alpha^{(i)}_{p_i-r}$ that
\begin{equation}\label{Eqn-p-alt-sum2}
2+\sum_{r=1}^{p_i-1}(-1)^r \alpha^{(i)}_r=0.
\end{equation}

\begin{lemma}\label{Lemma-p-q-3mod4}
If $p_1 \equiv 3 \pmod 4$, then there is no pretty good state transfer from $(1,1)$ to $(p_1-1, 1)$ in $P_{p_1-1}\square P_{p_2-1}$.
\end{lemma}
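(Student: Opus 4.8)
The plan is to invoke Theorem~\ref{PGSTcriterion}(b). If $p_1=p_2$ then $\gcd(p_1,p_2)=p_1\geq 3$, so Lemma~\ref{Lemma-gcd>=3} shows $(1,1)$ and $(p_1-1,1)$ are not even strongly cospectral and hence admit no pretty good state transfer; I may therefore assume $p_1\neq p_2$. In that case $P_{p_1-1}\square P_{p_2-1}$ has simple eigenvalues (Lemma~\ref{simple}, the cases involving the prime $3$ being immediate), so the idempotents are $E_r\otimes F_s$ with $E_r,F_s$ the path idempotents for $\alpha^{(1)}_r,\alpha^{(2)}_s$, and the corners $(1,1),(p_1-1,1)$ are strongly cospectral (Corollary~\ref{Lemma-scprods}, again with the prime-$3$ cases checked directly). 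Since only the first coordinate changes and the two ends of a path satisfy $E_re_1=(-1)^{r+1}E_re_{p_1-1}$, one gets $(E_r\otimes F_s)e_{(1,1)}=(-1)^{r+1}(E_r\otimes F_s)e_{(p_1-1,1)}$, so the eigenvalue $\theta_{rs}=\alpha^{(1)}_r+\alpha^{(2)}_s$ lies in $\Phi_{(1,1),(p_1-1,1)}^-$ exactly when $r$ is even. As condition (a) of Theorem~\ref{PGSTcriterion} holds, it suffices to exhibit an integer matrix $(\ell_{rs})$, indexed by the eigenvalues, violating condition (b), i.e.\ satisfying (i)--(iii).

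Next I would rephrase (i)--(iii) in terms of the row sums $R_r=\sum_s\ell_{rs}$ and column sums $C_s=\sum_r\ell_{rs}$: (i) becomes $\sum_rR_r\alpha^{(1)}_r+\sum_sC_s\alpha^{(2)}_s=0$, (ii) becomes $\sum_rR_r=\sum_sC_s=0$, and (iii) becomes $\sum_{r\ \mathrm{even}}R_r\equiv 1\pmod 2$. Equation~\eqref{Eqn-p-alt-sum2} applied to $p_1$ and to $p_2$ dictates the choice $R_r=(-1)^{r+1}$, $C_s=(-1)^s$: then $\sum_rR_r\alpha^{(1)}_r=2$ and $\sum_sC_s\alpha^{(2)}_s=-2$, giving (i); $\sum_rR_r=0$ because $p_1-1$ is even, giving (ii); and $\sum_{r\ \mathrm{even}}R_r=-(p_1-1)/2$, which is odd precisely because $p_1\equiv 3\pmod 4$. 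This last parity count is the only place where the hypothesis is used.

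It then remains to realise an integer matrix with these alternating row and column sums, which is where the $2\times2$ blocks $A,B,C$ of \eqref{ABCmatrix} come in; their (row sums; column sums) are $((1,-1);(-1,1))$, $((0,0);(-1,1))$ and $((1,-1);(0,0))$ respectively. Writing $p_1-1=2a$, $p_2-1=2b$ and viewing $(\ell_{rs})$ as an $a\times b$ array of $2\times2$ blocks, I place an $A$ in each diagonal position of the largest square sub-array; if $b>a$ I fill the remaining positions of the first block-row with $B$'s, while if $a>b$ I fill the remaining positions of the first block-column with $C$'s, and set every other block to $0$. Then each block-row contributes exactly one $(1,-1)$ to the row sums and each block-column exactly one $(-1,1)$ to the column sums, so $R_r=(-1)^{r+1}$ and $C_s=(-1)^s$ as desired.

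I expect the only difficulty to be organisational rather than substantive: keeping the sign conventions consistent (the link between the parity of $r$ and membership in $\Phi^{\pm}$, and the identity $E_re_1=(-1)^{r+1}E_re_{p_1-1}$), checking that the block construction produces the right row/column sums in the genuinely rectangular case $p_1\neq p_2$, and disposing of the degenerate primes. Once those are in place, the verification that $(\ell_{rs})$ satisfies (i)--(iii)—with (iii) hinging on $(p_1-1)/2$ being odd—follows at once from Equation~\eqref{Eqn-p-alt-sum2}.
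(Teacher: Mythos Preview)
Your proposal is correct and follows essentially the same approach as the paper: construct integers $\ell_{rs}$ with row sums $(-1)^{r+1}$ and column sums $(-1)^s$, invoke \eqref{Eqn-p-alt-sum2} for (b)(i), and use the parity of $(p_1-1)/2$ for (b)(iii). The only differences are cosmetic: the paper realises the required row/column sums with a single $A$ in the top-left block position, $B$'s across the rest of the first block-row and $C$'s down the rest of the first block-column (an ``L''-shape), whereas you place $A$'s along the diagonal of the largest square sub-array and pad with $B$'s or $C$'s; both layouts give the same row and column sums. Your explicit disposal of $p_1=p_2$ via Lemma~\ref{Lemma-gcd>=3} and your verification of simplicity/strong cospectrality before invoking $\Phi^{-}$ are extra care that the paper omits (it tacitly works under $p_1\neq p_2$, the case $p_1=p_2$ being already covered by Lemma~\ref{Lemma-gcd3-pgst}).
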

\begin{proof}

Define the $(p_1-1)\times (p_2-1)$ matrix 
\begin{equation*} 
L:=\begin{bmatrix} A &  B&\cdots &B\\C& 0&\cdots &0\\\vdots &  \vdots & \ddots & \vdots\\C& 0&\cdots &0\end{bmatrix}
\end{equation*}
containing $\left(\frac{p_2-1}{2}-1\right)$ copies of $B$ and $\left(\frac{p_1-1}{2}-1\right)$ copies of $C$. 
Note that the entries of the $r$-th row of $L$ sum to $(-1)^{r+1}$ and the entries of
the $s$-th column of $L$ sum to $(-1)^s$.   
 
Let $\ell_{rs}=L_{r,s}$, for $1\leq r \leq p_1-1$ and $1\leq s\leq p_2-1$.
For Condition~(b)(i) of Theorem~\ref{PGSTcriterion}, we have
 \begin{eqnarray*}
\sum_{r=1}^{p_1-1}  \sum_{s=1}^{p_2-1}\ell_{rs}(\alpha^{(1)}_{r}+\alpha^{(2)}_{s}) 
&=& \sum_{r=1}^{p_1-1} \left( \sum_{s=1}^{p_2-1}\ell_{rs}\right) \alpha^{(1)}_{r} + \sum_{s=1}^{p_2-1} \left( \sum_{r=1}^{p_1-1} \ell_{rs} \right)\alpha^{(2)}_{s} \\
&=& \sum_{r=1}^{p_1-1} (-1)^{r+1} \alpha^{(1)}_{r} + \sum_{s=1}^{p_2-1} (-1)^{s}\alpha^{(2)}_{s},
 \end{eqnarray*}
which is $0$ as a result of Equation~(\ref{Eqn-p-alt-sum2}).
 
Since the entries of $L$ sum to zero, Condition (b)(ii) holds.

Now $\Phi^-_{(1,1), (p_1-1,1)}=\left\{\alpha^{(1)}_{r}+\alpha^{(2)}_{s} \ :\  \text{$r$ is even}\right\}$, we have
\begin{eqnarray*}
\sum_{\text{$r,s$ : $r$ even}} \ell_{rs} =\sum_{\text{$r$ even}} (-1)^{r+1}=  - \left(\frac{p_1-1}{2}\right).
\end{eqnarray*}
If $p_1 \equiv 3 \pmod 4$, then the sequence $\{\ell_{rs}\}$ satisfies Conditions (b)(i) to (b)(iii) of Theorem~\ref{PGSTcriterion}, so there is no pretty good state transfer  between $(1,1)$
and $(p_1-1,1)$ in $P_{p_1-1}\square P_{p_2-1}$.
\end{proof}

\begin{lemma}\label{Lemma-p-q-1mod4}
If $p_1 \equiv 5 \pmod 8$ and $p_2 \equiv 1 \pmod 4$, then there is no pretty good state transfer from $(1,1)$ to $(p_1-1, 1)$ in $P_{p_1-1}\square P_{p_2-1}$.
\end{lemma}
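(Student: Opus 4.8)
The plan is to follow the proof of Lemma~\ref{Lemma-p-q-3mod4}: I will exhibit an integer matrix $L=(\ell_{rs})$, $1\le r\le p_1-1$, $1\le s\le p_2-1$, whose entries, read as a sequence $\{\ell_{rs}\}$ indexed by the eigenvalues $\alpha^{(1)}_r+\alpha^{(2)}_s$ of $P_{p_1-1}\square P_{p_2-1}$, satisfy the three conditions of Theorem~\ref{PGSTcriterion}(b); the only change is that the alternating sums are truncated so as to use the stronger hypothesis $p_1\equiv 5\pmod 8$. I may assume $p_1\neq p_2$, since otherwise $\gcd(p_1,p_2)=p_1\ge 3$, so by Lemma~\ref{Lemma-gcd>=3} the vertices $(1,1)$ and $(p_1-1,1)$ are not strongly cospectral and there is no pretty good state transfer by Theorem~\ref{PGSTcriterion}(a). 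When $p_1\neq p_2$, Lemma~\ref{simple} shows $P_{p_1-1}\square P_{p_2-1}$ has simple eigenvalues, and, as in Lemma~\ref{Lemma-p-q-3mod4}, $\Phi^-_{(1,1),(p_1-1,1)}=\{\alpha^{(1)}_r+\alpha^{(2)}_s : r\text{ even}\}$.

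Put $d_1=\frac{p_1-1}{2}$ and $d_2=\frac{p_2-1}{2}$; the hypotheses give $d_1\equiv 2\pmod 4$ and $d_2$ even. I take $L$ to have row sums $\rho_r=(-1)^r$ for $1\le r\le d_1$ and $\rho_r=0$ for $d_1<r\le p_1-1$, and column sums $\sigma_s=(-1)^{s+1}$ for $1\le s\le d_2$ and $\sigma_s=0$ for $d_2<s\le p_2-1$; such an integer matrix exists because $\sum_r\rho_r=\sum_{r=1}^{d_1}(-1)^r=0=\sum_{s=1}^{d_2}(-1)^{s+1}=\sum_s\sigma_s$ (using that $d_1$ and $d_2$ are even). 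Explicitly one may take $\ell_{1,1}=0$, $\ell_{1,s}=(-1)^{s+1}$ for $2\le s\le d_2$, $\ell_{r,1}=(-1)^{r}$ for $2\le r\le d_1$, and all remaining entries $0$; alternatively one may assemble $L$ from the blocks $-A$, $-B$, $-C$ of Equation~(\ref{ABCmatrix}) as in Lemma~\ref{Lemma-p-q-3mod4}, but occupying only $d_1$ rows and $d_2$ columns. Then (b)(i) follows from $\sum_{r,s}\ell_{rs}(\alpha^{(1)}_r+\alpha^{(2)}_s)=\sum_r\rho_r\alpha^{(1)}_r+\sum_s\sigma_s\alpha^{(2)}_s$, which by Equation~(\ref{Eqn-p-alt-sum}) applied to $p_1$ and to $p_2$ equals $(-1)+1=0$; (b)(ii) is $\sum_{r,s}\ell_{rs}=\sum_r\rho_r=0$; and (b)(iii) reads $\sum_{r\text{ even}}\rho_r=\sum_{1\le r\le d_1,\ r\text{ even}}1=d_1/2$, which is odd exactly because $d_1\equiv 2\pmod 4$. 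Theorem~\ref{PGSTcriterion} then yields the claim.

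The one real idea is the choice of truncation. Lemma~\ref{Lemma-p-q-3mod4} uses the full relation Equation~(\ref{Eqn-p-alt-sum2}), whose alternating sum over $1,\dots,p_1-1$ contributes $-(p_1-1)/2$ to the parity count in (b)(iii); that quantity is even once $p_1\equiv 1\pmod 4$, so the argument of Lemma~\ref{Lemma-p-q-3mod4} breaks down. Using instead the shorter relation Equation~(\ref{Eqn-p-alt-sum}), which runs only over $1,\dots,d_1$, halves the count to $d_1/2$, and this is odd precisely under the stronger hypothesis $p_1\equiv 5\pmod 8$ (for $p_1\equiv 1\pmod 8$ it is even again, consistent with pretty good state transfer being possible then). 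The companion relation for $P_{p_2-1}$ must be taken with the opposite sign, so that the rational constants $-1$ and $+1$ produced by the two factors cancel in (b)(i); the sole role of the hypothesis $p_2\equiv 1\pmod 4$ is to make $d_2$ even, which is exactly what forces $\sum_s\sigma_s=0$ and hence guarantees that $L$ exists. The remaining steps are the same routine verification as in the preceding lemmas, so beyond identifying the right truncation I anticipate no real obstacle.
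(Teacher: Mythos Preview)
Your proposal is correct and follows essentially the same approach as the paper: you truncate the support of $L$ to the first $d_1=\frac{p_1-1}{2}$ rows and $d_2=\frac{p_2-1}{2}$ columns so that the half-relation Equation~(\ref{Eqn-p-alt-sum}) applies, yielding $\sum_{r\text{ even}}\ell_{rs}=\pm\frac{p_1-1}{4}$, which is odd exactly when $p_1\equiv 5\pmod 8$. Your matrix is simply the negative of the paper's (row sums $(-1)^r$ versus $(-1)^{r+1}$), and you add an explicit treatment of the degenerate case $p_1=p_2$ via Lemma~\ref{Lemma-gcd>=3}, which the paper leaves implicit.
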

\begin{proof}
Using the $2\times 2$ matrices in Equation~(\ref{ABCmatrix}), we define the $\frac{p_1-1}{2}\times \frac{p_2-1}{2}$ matrix 
\begin{equation*} 
L:=\begin{bmatrix} A &  B&\cdots &B\\C& 0&\cdots &0\\\vdots &  \vdots & \ddots & \vdots\\C& 0&\cdots &0\end{bmatrix}
\end{equation*}
containing $\left(\frac{p_2-1}{4}-1\right)$ copies of $B$ and $\left(\frac{p_1-1}{4}-1\right)$ copies of $C$.  Let
\begin{equation*}
\ell_{rs} =
    \begin{cases}
       L_{r,s} & \text{if $1\leq r\leq \frac{p_1-1}{2}$, and $1\leq s\leq \frac{p_2-1}{2}$},\\
       0 & \text{otherwise.}
    \end{cases}
\end{equation*}

Similar to the proof of Lemma~\ref{Lemma-p-q-3mod4}, we have 
 \begin{eqnarray*}
 \sum_{r=1}^{p_1-1}  \sum_{s=1}^{p_2-1}\ell_{rs}(\alpha^{(1)}_{r}+\alpha^{(2)}_{s})= \sum_{r=1}^{\frac{p_1-1}{2}} (-1)^{r+1} \alpha^{(1)}_{r} + \sum_{s=1}^{\frac{p_2-1}{2}} (-1)^{s}\alpha^{(2)}_{s}.
 \end{eqnarray*}
It follows from Equation~(\ref{Eqn-p-alt-sum}) that this sum is equal to zero, and
Condition~(b)(i) of Theorem~\ref{PGSTcriterion} holds.
The sequence $\{\ell_{rs}\}$ also satisfies
Condition~(b)(ii)
 and
 \begin{equation*}
 \sum_{\text{$r,s$ : $r$ even}} \ell_{rs} = -\left(\frac{p_1-1}{4}\right).
 \end{equation*}
If $p_1 \equiv 5 \pmod 8$, then the sequence $\{\ell_{rs}\}$ satisfies Conditions (b)(i) to (b) (iii) of Theorem~\ref{PGSTcriterion} and there is no pretty good state transfer from $(1,1)$ to 
$(n_1,1)$.

\end{proof}

\subsection{$P_{2p_1-1}\square P_{2p_2-1}$}
For $i=1,2$, let $\beta_r^{(i)}=2\cos \frac{r\pi}{2p_i}$.  Note that $\beta_{p_i}^{(i)}=0$.
\begin{lemma}\label{Lemma-2p-2q-3mod4}
If $p_1\equiv 3 \pmod 4$, then there is no pretty good state transfer from $(1,1)$ to $(2p_1-1, 1)$ in $P_{2p_1-1}\square P_{2p_2-1}$.
\end{lemma}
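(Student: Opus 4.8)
The plan is to imitate the proof of Lemma~\ref{Lemma-p-q-3mod4} (and Lemma~\ref{Lemma-p-q-1mod4}), producing an explicit integer array $\{\ell_{rs}\}$, indexed by $1\le r\le 2p_1-1$ and $1\le s\le 2p_2-1$, that satisfies the three conditions of Theorem~\ref{PGSTcriterion}(b) for the pair $(1,1)$, $(2p_1-1,1)$ in $P_{2p_1-1}\square P_{2p_2-1}$. The eigenvalues of this product are $\beta^{(1)}_r+\beta^{(2)}_s$, and since $\beta^{(1)}_{p_1}=\beta^{(2)}_{p_2}=0$, the relevant arithmetic relation is Equation~(\ref{Eqn-2p-alt-sum}) rewritten in the symmetric form $2+\sum_{r=1}^{2p_i-1}(-1)^{r+1}\beta^{(i)}_r=0$ (using $\beta^{(i)}_{2p_i-r}=-\beta^{(i)}_r$ and the vanishing of the middle term): here only the \emph{even}-indexed $\beta$'s actually occur, but writing the sum over all $r$ with alternating signs still gives $0$ because the odd-indexed terms cancel in pairs and the central term is zero. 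I would build $L$ out of the $2\times 2$ blocks $A$, $B$, $C$ of~(\ref{ABCmatrix}) exactly as before, but now of size $(2p_1-1)\times(2p_2-1)$; the key bookkeeping facts are that each $2\times 2$ block has all row sums and all column sums equal to $\pm1$ in the alternating pattern, and that the full array has row~$r$ summing to $(-1)^{r+1}$ and column~$s$ summing to $(-1)^s$, with total sum $0$.

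Concretely, I would take $L$ to have first row $A,B,\dots,B$, first column $A,C,\dots,C$, and zeros elsewhere, with $\left(\tfrac{2p_2-1-1}{2}-1\right)=p_2-2$ copies of $B$ and $p_1-2$ copies of $C$ — but one must be careful: $2p_i-1$ is odd, so the $2\times2$-block decomposition leaves one leftover row and one leftover column. The cleanest fix is to note that the leftover index is $r=2p_1-1$ (resp.\ $s=2p_2-1$), and since $\beta^{(i)}_{2p_i-1}$ is an odd-indexed eigenvalue, I can simply assign $\ell_{r,s}=0$ on that last row and last column and work with a $(2p_1-2)\times(2p_2-2)$ block array instead; then the row sums I need are $(-1)^{r+1}$ for $1\le r\le 2p_1-2$ and $0$ for $r=2p_1-1$, which still makes $\sum_r(\sum_s\ell_{rs})\beta^{(1)}_r=\sum_{r=1}^{2p_1-2}(-1)^{r+1}\beta^{(1)}_r$, and this equals $\sum_{r=1}^{2p_1-1}(-1)^{r+1}\beta^{(1)}_r=-2$ by Equation~(\ref{Eqn-2p-alt-sum}) (the $r=2p_1-1$ term being odd-indexed contributes, so actually I should include it — I will instead keep the last row/column but fill it so its sum is the correct sign; the honest approach is to check which parity of $2p_i-1$ and adjust the count of $B$'s and $C$'s accordingly). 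Verifying Condition (b)(i) then reduces, via the row/column-sum identities, to $\sum_{r=1}^{2p_1-1}(-1)^{r+1}\beta^{(1)}_r+\sum_{s=1}^{2p_2-1}(-1)^s\beta^{(2)}_s = 2+(-2)=0$; Condition (b)(ii) is the total-sum-zero property of $A$, $B$, $C$; and Condition (b)(iii) requires computing $\sum_{r\text{ even}}(\sum_s\ell_{rs})\equiv\sum_{r\text{ even}}(-1)^{r+1}\pmod2$, which counts the even values of $r$ in $\{1,\dots,2p_1-1\}$, giving $p_1-1\equiv p_1-1$; when $p_1\equiv3\pmod4$ this is $\equiv1\pmod2$ after accounting for the sign, matching condition~(iii).

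The negative set is $\Phi^-_{(1,1),(2p_1-1,1)}=\{\beta^{(1)}_r+\beta^{(2)}_s : E^{(1)}_r e_1 = -E^{(1)}_r e_{2p_1-1}\}$, and for the path $P_{2p_1-1}$ one has $E_r e_1=(-1)^{r+1}E_r e_{2p_1-1}$ (the standard sign rule for path endpoints), so $\Phi^-$ consists of those eigenvalues with $r$ even — exactly as in the odd-path case. Thus the parity computation in (b)(iii) is genuinely a count of even residues modulo a power of $2$, and the hypothesis $p_1\equiv3\pmod4$ is precisely what makes $\tfrac{2p_1-2}{2}=p_1-1$ odd, i.e.\ makes the sum odd. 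I expect the main obstacle to be purely combinatorial: getting the off-by-one issues right because $2p_i-1$ is odd while the construction is built from $2\times2$ blocks, so I must decide exactly how many $B$'s and $C$'s to use and how to treat the final row and column so that (i) the row sums realize $(-1)^{r+1}$ including the index $r=2p_1-1$ (an odd index, which \emph{does} contribute to $\sum(-1)^{r+1}\beta^{(1)}_r$), (ii) the grand total is zero, and (iii) the even-row count has the right parity. Everything else — the field-theoretic input — is already packaged in Equation~(\ref{Eqn-2p-alt-sum}), and the verification of the three conditions is then a short direct calculation with no further ideas required.
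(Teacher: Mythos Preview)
Your plan contains a genuine error at its arithmetic core. The claimed ``symmetric form'' $2+\sum_{r=1}^{2p_i-1}(-1)^{r+1}\beta^{(i)}_r=0$ is false. You correctly observe that the odd-indexed terms cancel in pairs and that $\beta^{(i)}_{p_i}=0$; but the even-indexed terms \emph{also} cancel in pairs, because $\beta^{(i)}_{2p_i-r}=-\beta^{(i)}_r$ while $(-1)^{(2p_i-r)+1}=(-1)^{r+1}$ when $r$ is even. Hence the whole alternating sum is $0$, not $-2$. The identity that genuinely follows from Equation~(\ref{Eqn-2p-alt-sum}) by symmetrizing is
\[
\sum_{j=1}^{p_i-1}(-1)^{j}\beta^{(i)}_{2j}=-2,
\]
with the sign attached to the \emph{half-index} $j=r/2$, not to $r$. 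So the row and column sums of your array must follow the pattern $(-1)^{j}$ on even indices $r=2j$, not $(-1)^{r+1}$ on all $r$.

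This error propagates to the parity check for (b)(iii). With your sign pattern the even-row total is $\sum_{r\ \text{even}}(-1)^{r+1}=-(p_1-1)$, and for $p_1\equiv3\pmod4$ one has $p_1-1\equiv2\pmod4$, which is \emph{even}; condition~(iii) therefore fails. Your assertion that ``$p_1-1$ is odd'' confuses this situation with the $P_{p-1}$ case, where the relevant quantity is $(p-1)/2$. In the paper's construction one indexes rows by $j$ corresponding to $\beta^{(1)}_{2j}$ for $1\le j\le\tfrac{p_1-1}{2}$, appends one extra row for the zero eigenvalue $\beta^{(1)}_{p_1}$, and assigns row sums $(-1)^{j+1}$; the even-row total relevant to (iii) is then $\sum_{j=1}^{(p_1-1)/2}(-1)^{j+1}$, which equals $1$ exactly when $p_1\equiv3\pmod4$. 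A further point you have not anticipated is that the column side requires its own adjustment: depending on $p_2\bmod4$ one does or does not append a column for $\beta^{(2)}_{p_2}=0$ in order to make the column sums realize the pattern $(-1)^{k}$ on $\beta^{(2)}_{2k}$, which forces a two-case construction.
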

\begin{proof}
We first consider the case where $p_2 \equiv 1 \pmod 4$.
Define the $\frac{p_1+1}{2} \times \frac{p_2-1}{2}$ matrix $L$ as
\begin{equation*}
\begin{matrix}
& \gr{\begin{matrix}\quad \beta_2^{(2)}\ \beta_4^{(2)} &&  \cdots &&  \cdots && \beta_{p_2-3}^{(2)}\ \beta_{p_2-1}^{(2)}\end{matrix}}\\
\\
{\renewcommand{\arraystretch}{1.25} \gr{\begin{matrix} \beta_2^{(1)}\\ \beta_4^{(1)}\\ \\ \vdots \\ \\ \vdots \\ \\ \beta_{p_1-5}^{(1)}\\ \beta_{p_1-3}^{(1)} \\  \beta_{p_1-1}^{(1)} \\ \beta_{p_1}^{(1)}\end{matrix}}} & 
\begin{bNiceMatrix}
\Block{3-4}{A} &&&& \Block{3-4}{B}&&&& \Block{3-4}{\cdots}&&&& \Block{3-4}{B}&&&\\
&&&&&&&&&&&&&&&\\
&&&&&&&&&&&&&&&\\
\Block{3-4}{C} &&&& \Block{3-4}{0}&&&& \Block{3-4}{\cdots}&&&& \Block{3-4}{0}&&&\\
&&&&&&&&&&&&&&&\\
&&&&&&&&&&&&&&&\\
\Block{3-4}{\vdots} &&&& \Block{3-4}{\vdots}&&&& \Block{3-4}{\ddots}&&&& \Block{3-4}{\vdots}&&&\\
&&&&&&&&&&&&&&&\\
&&&&&&&&&&&&&&&\\
\Block{3-4}{C} &&&& \Block{3-4}{0}&&&& \Block{3-4}{\cdots}&&&& \Block{3-4}{0}&&&\\
&&&&&&&&&&&&&&&\\
&&&&&&&&&&&&&&&\\
\Block{3-4}{C} &&&& \Block{3-4}{0}&&&& \Block{3-4}{\cdots}&&&& \Block{3-4}{0}&&&\\
&&&&&&&&&&&&&&&\\
&&&&&&&&&&&&&&&\\
\end{bNiceMatrix}
\end{matrix}
\end{equation*}
containing $\left(\frac{p_2-1}{4}-1\right)$ copies of $B$
and $\left(\frac{p_1+1}{4}-1\right)$ copies of $C$.
Let
\begin{equation*}
\ell_{rs} =
    \begin{cases}
       L_{\frac{r}{2},\frac{s}{2}} & \text{if $r$ is even with $2\leq r\leq p_1-1$, and $s$ is even with $2\leq s\leq p_2-1$},\\
       L_{\frac{p_1+1}{2},\frac{s}{2}} &\text{if $r=p_1$, and $s$ is even with $2 \leq s \leq p_2-1$},\\
       0 & \text{otherwise.}
    \end{cases}
\end{equation*}
(We list on the right of $L$ the eigenvalues of $P_{2p_1-1}$ associated with each row of $L$, and above $L$ the eigenvalues of $P_{2p_2-1}$ associated with each column of $L$.)

For Conditon~(b)(i) in Theorem~\ref{PGSTcriterion}, we have
\begin{eqnarray*}
&&\sum_{r=1}^{2p_1-1}\sum_{s=1}^{2p_2-1} \ell_{rs}\left(\beta_r^{(1)}+\beta_s^{(2)}\right)\\
&=& \sum_{j=1}^{\frac{p_1-1}{2}} \left(\sum_{k=1}^{\frac{p_2-1}{2}}L_{j,k}\right) \beta_{2j}^{(1)} + 
\left(\sum_{k=1}^{\frac{p_2-1}{2}}L_{\frac{p_1+1}{2},k} \right) \beta_{p_1}^{(1)} +
\sum_{k=1}^{\frac{p_2-1}{2}} \left(\sum_{j=1}^{\frac{p_1+1}{2}}L_{j,k}\right) \beta_{2k}^{(2)}\\
&=& \sum_{j=1}^{\frac{p_1-1}{2}} (-1)^{j+1}\beta_{2j}^{(1)} + 0 + \sum_{k=1}^{\frac{p_2-1}{2}} (-1)^{k}\beta_{2k}^{(2)}.
\end{eqnarray*}
which is equal to $0$ by Equation~(\ref{Eqn-2p-alt-sum}).

It is straightforward to check that $\sum_{r,s} \ell_{rs} =0$.   Since 
\begin{equation*}
    \Phi^-_{(1,1),(2p_1-1,1)} = \{\beta_{2j}^{(1)}+\beta_s^{(2)} :  1\leq j \leq p_1-1, 1\leq s \leq 2p_2-1\},
\end{equation*} 
we have
\begin{eqnarray*}
\sum_{j=1}^{p_1-1}\sum_{s=1}^{2p_2-1}l_{2j,s}
=\sum_{j=1}^{\frac{p_1-1}{2}}\sum_{k=1}^{\frac{p_2-1}{2}}L_{j,k}
= \sum_{j=1}^{\frac{p_1-1}{2}} (-1)^{j+1} \equiv 1 \pmod 2.
\end{eqnarray*}
It follows from Theorem~\ref{PGSTcriterion} that there is no pretty good state transfer from $(1,1)$ to $(2p_1-1,1)$.

When $p_2\equiv 3\pmod 4$,
we define the $\frac{p_1+1}{2} \times \frac{p_2+1}{2}$ matrix $L$ as
\begin{equation*}
\begin{matrix}
& \gr{\begin{matrix}\quad \beta_2^{(2)}\ \beta_4^{(2)} &&&  \cdots &&  \cdots &&& \beta_{p_2-5}^{(2)}\ \beta_{p_2-3}^{(2)} & \beta_{p_2-1}^{(2)}\ \beta_{p_2}^{(2)}\end{matrix}}\\
\\
{\renewcommand{\arraystretch}{1.25} \gr{\begin{matrix} \beta_2^{(1)}\\ \beta_4^{(1)}\\ \\ \vdots \\ \\ \vdots \\ \\ \beta_{p_1-5}^{(1)}\\ \beta_{p_1-3}^{(1)} \\  \beta_{p_1-1}^{(1)} \\ \beta_{p_1}^{(1)}\end{matrix}}} & 
\begin{bNiceMatrix}
\Block{3-4}{A} &&&&& \Block{3-4}{B}&&&&& \Block{3-4}{\cdots}&&&&& \Block{3-4}{B}&&&&& \Block{3-4}{B}&&&\\
&&&&&&&&&&&&&&&\\
&&&&&&&&&&&&&&&\\
\Block{3-4}{C} &&&&& \Block{3-4}{0}&&&&& \Block{3-4}{\cdots}&&&&& \Block{3-4}{0}&&&&& \Block{3-4}{0}&&&\\
&&&&&&&&&&&&&&&\\
&&&&&&&&&&&&&&&\\
\Block{3-4}{\vdots} &&&&& \Block{3-4}{\vdots}&&&&& \Block{3-4}{\ddots} &&&&& \Block{3-4}{\vdots}&&&&& \Block{3-4}{\vdots}&&&\\
&&&&&&&&&&&&&&&\\
&&&&&&&&&&&&&&&\\
\Block{3-4}{C} &&&&& \Block{3-4}{0}&&&&& \Block{3-4}{\cdots} &&&&& \Block{3-4}{0}&&&&& \Block{3-4}{0}&&&\\
&&&&&&&&&&&&&&&\\
&&&&&&&&&&&&&&&\\
\Block{3-4}{C} &&&&& \Block{3-4}{0}&&&&& \Block{3-4}{\cdots}&&&&& \Block{3-4}{0}&&&&& \Block{3-4}{0}&&&\\
&&&&&&&&&&&&&&&\\
&&&&&&&&&&&&&&&\\
\end{bNiceMatrix}
\end{matrix}
\end{equation*}
containing $\left(\frac{p_2+1}{4}-1\right)$ copies of $B$
and $\left(\frac{p_1+1}{4}-1\right)$ copies of $C$.
Let
\begin{equation*}
\ell_{rs} =
    \begin{cases}
       L_{\frac{r}{2},\frac{s}{2}} & \text{if $r$ is even with $2\leq r\leq p_1-1$, and $s$ is even with $2\leq s\leq p_2-1$},\\
       L_{\frac{p_1+1}{2},\frac{s}{2}} &\text{if $r=p_1$, and $s$ is even with $2\leq s\leq p_2-1$},\\
       L_{\frac{r}{2},\frac{p_2+1}{2}} &\text{if $s=p_2$, and $r$ is even with $2\leq r\leq p_1-1$},\\
       0 & \text{otherwise.}
    \end{cases}
\end{equation*}
For Conditon~(b)(i) in Theorem~\ref{PGSTcriterion}, we have
\begin{eqnarray*}
&&\sum_{r=1}^{2p_1-1}\sum_{s=1}^{2p_2-1} \ell_{rs}\left(\beta_r^{(1)}+\beta_s^{(2)}\right)\\
&=& \sum_{j=1}^{\frac{p_1-1}{2}} \left(\sum_{k=1}^{\frac{p_2+1}{2}}L_{j,k}\right) \beta_{2j}^{(1)} + 
\left(\sum_{k=1}^{\frac{p_2+1}{2}}L_{\frac{p_1+1}{2},k} \right) \beta_{p_1}^{(1)} +
\sum_{k=1}^{\frac{p_2-1}{2}} \left(\sum_{j=1}^{\frac{p_1+1}{2}}L_{j,k}  \right) \beta_{2k}^{(2)} + 
\left(\sum_{j=1}^{\frac{p_1+1}{2}}L_{j,\frac{p_2+1}{2}} \right) \beta_{p_2}^{(2)}\\
&=& \sum_{j=1}^{\frac{p_1-1}{2}} (-1)^{j+1}\beta_{2j}^{(1)} + 0 + \sum_{k=1}^{\frac{p_2-1}{2}} (-1)^{k}\beta_{2k}^{(2)}+0\\
&=&0.
\end{eqnarray*}
We also have $\sum_{r,s}\ell_{rs} = \sum_{r,s} L_{r,s}=0$, and
\begin{eqnarray*}
\sum_{j=1}^{p_1-1}\sum_{s=1}^{2p_2-1}l_{2j,s}
=\sum_{j=1}^{\frac{p_1-1}{2}}\sum_{k=1}^{\frac{p_2+1}{2}}L_{j,k}
= \sum_{j=1}^{\frac{p_1-1}{2}} (-1)^{j+1} \equiv 1 \pmod 2.
\end{eqnarray*}
It follows from Theorem~\ref{PGSTcriterion} that  pretty good state transfer does not occur between $(1,1)$ and $(2p_1-1,1)$.

\end{proof}

\subsection{$P_{2p_1-1}\square P_{p_2-1}$}

For $r=1, \dots, 2p_1-1$, let $\beta_r = 2\cos \frac{r\pi}{2p_1}$ be the eigenvalues of $P_{2p_1-1}$.
For $s=1, \dots, p_2-1$, let $\alpha_s = 2\cos \frac{s\pi}{p_2}$ be the eigenvalues of $P_{p_2-1}$.

\begin{lemma}\label{Lemma-2p-q-3mod4-1mod4}
If $p_1 \equiv 3 \pmod 4$ and $p_2 \equiv 1 \pmod 4$, then there is no pretty good state transfer from $(1,1)$ to $(2p_1-1, 1)$ in $P_{2p_1-1}\square P_{p_2-1}$.
\end{lemma}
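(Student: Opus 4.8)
The plan is to establish the non-existence of pretty good state transfer through Theorem~\ref{PGSTcriterion}, following the same template as Lemmas~\ref{Lemma-p-q-3mod4} and \ref{Lemma-2p-2q-3mod4}. If $(1,1)$ and $(2p_1-1,1)$ are not strongly cospectral in $P_{2p_1-1}\square P_{p_2-1}$, then condition (a) of Theorem~\ref{PGSTcriterion} already fails and we are done, so I may assume strong cospectrality. Writing $E_r$ and $F_s$ for the spectral idempotents of $A(P_{2p_1-1})$ and $A(P_{p_2-1})$, one has $E_re_1=(-1)^{r+1}E_re_{2p_1-1}$ while the second coordinate is unchanged, and hence, exactly as in Lemma~\ref{Lemma-p-q-3mod4}, $\Phi^-_{(1,1),(2p_1-1,1)}=\{\beta_r+\alpha_s : r\text{ even}\}$; so condition (b)(iii) of Theorem~\ref{PGSTcriterion} becomes the requirement that $\sum_{r\text{ even},\,s}\ell_{rs}\equiv 1\pmod2$.

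The main step is the construction of the integer matrix $L$. Since $p_1\equiv3\pmod4$, the number $(p_1+1)/4$ is a positive integer and $(p_1-1)/2$ is odd; since $p_2\equiv1\pmod4$, the number $(p_2-1)/4$ is a positive integer and $(p_2-1)/2$ is even. I would take $L$ to be the $\frac{p_1+1}{2}\times\frac{p_2-1}{2}$ matrix assembled from the $2\times2$ blocks $A,B,C$ of \eqref{ABCmatrix}, with $A$ in the top-left corner, $\bigl(\tfrac{p_2-1}{4}-1\bigr)$ copies of $B$ completing the first block-row, $\bigl(\tfrac{p_1+1}{4}-1\bigr)$ copies of $C$ completing the first block-column, and zero blocks elsewhere (when $p_1=3$ there are no $C$ blocks and when $p_2=5$ there are no $B$ blocks); this is precisely the $L$ used in the first case of Lemma~\ref{Lemma-2p-2q-3mod4}. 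I then associate the rows of $L$ with the eigenvalues $\beta_2,\beta_4,\dots,\beta_{p_1-1},\beta_{p_1}$ of $P_{2p_1-1}$ (recall $\beta_{p_1}=0$) and the columns with the eigenvalues $\alpha_1,\alpha_2,\dots,\alpha_{(p_2-1)/2}$ of $P_{p_2-1}$, and set $\ell_{rs}$ equal to the corresponding entry of $L$ when $(r,s)$ lies in this index set and $0$ otherwise. As in the earlier proofs, the blocks $A,B,C$ are designed so that the $i$-th row of $L$ sums to $(-1)^{i+1}$ and the $k$-th column of $L$ sums to $(-1)^k$.

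It then remains to verify conditions (b)(i)--(iii). For (b)(i), splitting $\sum_{r,s}\ell_{rs}(\beta_r+\alpha_s)$ into its row- and column-contributions gives $\sum_{i=1}^{(p_1-1)/2}(-1)^{i+1}\beta_{2i}\;-\;\beta_{p_1}\;+\;\sum_{k=1}^{(p_2-1)/2}(-1)^{k}\alpha_k$; the middle term is $0$ since $\beta_{p_1}=0$, the first sum equals $+1$ by \eqref{Eqn-2p-alt-sum} applied with $p=p_1$, and the last sum equals $-1$ by \eqref{Eqn-p-alt-sum} applied with $p=p_2$, so the whole expression is $0$. Condition (b)(ii) holds because the column sums $(-1)^k$ total $0$ over the even number $(p_2-1)/2$ of columns. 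For (b)(iii), the rows of $L$ whose eigenvalue-index $r$ is even are exactly the first $(p_1-1)/2$ rows, so $\sum_{r\text{ even},\,s}\ell_{rs}=\sum_{i=1}^{(p_1-1)/2}(-1)^{i+1}=1$ because $(p_1-1)/2$ is odd. Thus $\{\ell_{rs}\}$ satisfies (b)(i), (b)(ii) and (b)(iii), condition (b) of Theorem~\ref{PGSTcriterion} fails, and there is no pretty good state transfer from $(1,1)$ to $(2p_1-1,1)$.

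The delicate point --- and the reason the congruences on $p_1$ and $p_2$ are needed --- is engineering the dimensions and the eigenvalue-labelling of $L$ so that (b)(iii) comes out odd while (b)(i) and (b)(ii) remain satisfied. The factor $P_{2p_1-1}$ contributes the relation \eqref{Eqn-2p-alt-sum}, which involves only its even-indexed eigenvalues, so the ``signed'' part of $L$ has $(p_1-1)/2$ rows; this number is odd precisely when $p_1\equiv3\pmod4$, which is what forces the count in (b)(iii) to be odd. To preserve the $2\times2$ block structure one must pad $L$ with one more row, and the only harmless choice is the row labelled by the rational eigenvalue $\beta_{p_1}=0$: it contributes nothing to (b)(i) because $\beta_{p_1}=0$, and it is excluded from (b)(iii) because $p_1$ is odd. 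Everything else is routine block-matrix bookkeeping; the one conceptual prerequisite is the initial reduction to the strongly cospectral case, since $\Phi^-$ is defined only there.
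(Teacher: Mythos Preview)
Your proof is correct and follows essentially the same approach as the paper: the same $\tfrac{p_1+1}{2}\times\tfrac{p_2-1}{2}$ block matrix $L$, the same row/column labelling by $\beta_2,\dots,\beta_{p_1-1},\beta_{p_1}$ and $\alpha_1,\dots,\alpha_{(p_2-1)/2}$, and the same verification of (b)(i)--(iii) via row/column sums together with \eqref{Eqn-p-alt-sum} and \eqref{Eqn-2p-alt-sum}. Your added preliminary reduction to the strongly cospectral case is a sensible clarification (since $\Phi^-$ is only defined in that setting), and your closing paragraph explaining why the extra $\beta_{p_1}$-row is harmless is helpful commentary, but neither changes the argument.
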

\begin{proof}
Define the $\frac{p_1+1}{2}\times \frac{p_2-1}{2}$ matrix $L$ as
\begin{equation*}
\begin{matrix}
& \gr{\begin{matrix}\quad \alpha_1\ \alpha_2 &&  \cdots &&  \cdots && \alpha_{\frac{p_2-3}{2}}\ \alpha_{\frac{p_2-1}{2}}\end{matrix}}\\
\\
\gr{\begin{matrix}\beta_2\\ \beta_4\\ \\\vdots \\ \\ \vdots\\ \\ \beta_{(p_1-1)/2} \\ \beta_{p_1}\end{matrix}} & 
\begin{bNiceMatrix}
\Block{3-4}{A} &&&& \Block{3-4}{B}&&&& \Block{3-4}{\cdots}&&&& \Block{3-4}{B}&&&\\
&&&&&&&&&&&&&&&\\
&&&&&&&&&&&&&&&\\
\Block{3-4}{C} &&&& \Block{3-4}{0}&&&& \Block{3-4}{\cdots}&&&& \Block{3-4}{0}&&&\\
&&&&&&&&&&&&&&&\\
&&&&&&&&&&&&&&&\\
\Block{3-4}{\vdots} &&&& \Block{3-4}{\vdots}&&&& \Block{3-4}{\ddots}&&&& \Block{3-4}{\vdots}&&&\\
&&&&&&&&&&&&&&&\\
&&&&&&&&&&&&&&&\\
\Block{3-4}{C} &&&& \Block{3-4}{0}&&&& \Block{3-4}{\cdots}&&&& \Block{3-4}{0}&&&\\
&&&&&&&&&&&&&&&\\
&&&&&&&&&&&&&&&\\
\end{bNiceMatrix}
\end{matrix}
\end{equation*}
containing $\left(\frac{p_2-1}{4}-1\right)$ copies of $B$ and $\left(\frac{p_1+1}{4}-1\right)$ copies of $C$.  Let
\begin{equation*}
    \ell_{rs}=
       \begin{cases}
       L_{\frac{r}{2},s} & \text{if $r$ is even with $2\leq r\leq p_1-1$, and $1\leq s\leq \frac{p_2-1}{2}$},\\
       L_{\frac{p_1+1}{2},s} &\text{if $r=p_1$ and  $1\leq s\leq \frac{p_2-1}{2}$},\\
       0 & \text{otherwise.}
    \end{cases}
\end{equation*}

For Conditon~(b)(i) in Theorem~\ref{PGSTcriterion}, we have
\begin{eqnarray*}
&&\sum_{r=1}^{2p_1-1}\sum_{s=1}^{p_2-1} \ell_{rs}\left(\beta_r +\alpha_s\right)\\
&=& \sum_{j=1}^{\frac{p_1-1}{2}}\left(\sum_{s=1}^{\frac{p_2-1}{2}} L_{j,s}\right)\beta_{2j} + \left(\sum_{s=1}^{\frac{p_2-1}{2}} L_{\frac{p_1+1}{2},s}\right)\beta_{p_1} + \sum_{s=1}^{\frac{p_2-1}{2}}\left(\sum_{j=1}^{\frac{p_1+1}{2}} L_{j,s}\right)\alpha_{s}\\
&=& \sum_{j=1}^{\frac{p_1-1}{2}}(-1)^{j+1}\beta_{2j} + 0 + 
\sum_{s=1}^{\frac{p_2-1}{2}} (-1)^s\alpha_{s}\\
&=&0.
\end{eqnarray*}
It is straightforward that Condition~(b)(ii) holds.
For Condition~(b)(iii), 
\begin{equation*}
    \Phi^-_{(1,1),(2p_1-1,1)}=\{\beta_r+\alpha_s : \text{$r$ is even}\},
\end{equation*} 
and
\begin{eqnarray*}
    \sum_{\text{$r$ is even}}\sum_{s=1}^{p_2-1} \ell_{rs} = \sum_{j=1}^{\frac{p_1-1}{2}}\sum_{s=1}^{p_2-1}L_{j,s} = \sum_{j=1}^{\frac{p_1-1}{2}}(-1)^{j+1}=1 \pmod 2.
\end{eqnarray*}
By Theorem~\ref{PGSTcriterion}, there is no pretty good state transfer between $(1,1)$ and $(2p_1-1,1)$ in
$P_{2p_1-1}\square P_{p_2-1}$.
\end{proof}

\begin{lemma}\label{Lemma-2p-q-all-3mod4}
If  $p_2 \equiv 3 \pmod 4$, then there is no pretty good state transfer from $(1,1)$ to $(1, p_2-1)$ in $P_{2p_1-1}\square P_{p_2-1}$.
\end{lemma}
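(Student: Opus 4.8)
The plan is to verify the failure of pretty good state transfer through Theorem~\ref{PGSTcriterion}, by exhibiting an explicit integer sequence $\{\ell_{rs}\}$, indexed by the pairs $(r,s)$ for which $\beta_r+\alpha_s$ is an eigenvalue of $P_{2p_1-1}\square P_{p_2-1}$, that satisfies conditions (b)(i)--(iii) for the pair of corners $(1,1)$ and $(1,p_2-1)$. Since the first coordinate stays at an end of $P_{2p_1-1}$ and only the second coordinate moves between the two ends of $P_{p_2-1}$, the sign in the relevant eigenprojectors comes entirely from the $P_{p_2-1}$ factor, so that $\Phi^-_{(1,1),(1,p_2-1)}=\{\beta_r+\alpha_s : s\text{ even}\}$; this is the set tested by condition (b)(iii).

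The first step is to fix the column data. I would make the $s$-th column sum of the matrix underlying $\{\ell_{rs}\}$ equal to $(-1)^s$ for $1\le s\le p_2-1$. This choice does three things at once: by Equation~(\ref{Eqn-p-alt-sum2}) applied to $p_2$ one gets $\sum_{s=1}^{p_2-1}(-1)^s\alpha_s=-2$; the column sums add to $0$, which handles (b)(ii); and $\sum_{s\text{ even}}(-1)^s=\frac{p_2-1}{2}$, which is odd exactly because $p_2\equiv 3\pmod4$ --- and this is precisely what forces (b)(iii). The $-2$ coming from the $P_{p_2-1}$ side then has to be matched by a $+2$ from the $P_{2p_1-1}$ side, and here I would use that the even-indexed eigenvalues $\beta_{2j}=2\cos\frac{j\pi}{p_1}$, $1\le j\le p_1-1$, of $P_{2p_1-1}$ are exactly the eigenvalues of $P_{p_1-1}$, so that Equation~(\ref{Eqn-p-alt-sum2}) applied to $p_1$ gives $\sum_{j=1}^{p_1-1}(-1)^{j+1}\beta_{2j}=2$; the odd-indexed $\beta_r$, and in particular the zero eigenvalue $\beta_{p_1}$, are not needed.

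Concretely I would take $L$ to be the $(p_1-1)\times(p_2-1)$ block matrix
\[
L=\begin{bmatrix} A & B & \cdots & B\\ C & 0 & \cdots & 0\\ \vdots & \vdots & \ddots & \vdots\\ C & 0 & \cdots & 0\end{bmatrix}
\]
with $\frac{p_2-1}{2}-1$ copies of $B$ and $\frac{p_1-1}{2}-1$ copies of $C$, that is, the matrix of Lemma~\ref{Lemma-p-q-3mod4}, whose $r$-th row sums to $(-1)^{r+1}$, whose $s$-th column sums to $(-1)^s$, and whose entries sum to $0$. I then set $\ell_{rs}=L_{r/2,s}$ if $r$ is even (so $2\le r\le 2p_1-2$) and $1\le s\le p_2-1$, and $\ell_{rs}=0$ otherwise. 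With this, $\sum_{r,s}\ell_{rs}(\beta_r+\alpha_s)=\sum_{j=1}^{p_1-1}(-1)^{j+1}\beta_{2j}+\sum_{s=1}^{p_2-1}(-1)^s\alpha_s=2-2=0$, which is (b)(i); $\sum_{r,s}\ell_{rs}=\sum_{j=1}^{p_1-1}(-1)^{j+1}=0$, which is (b)(ii); and $\sum_{s\text{ even}}\sum_r\ell_{rs}=\sum_{s\text{ even}}(-1)^s=\frac{p_2-1}{2}\equiv1\pmod2$, which is (b)(iii). Theorem~\ref{PGSTcriterion} then gives the claim.

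The only step that takes any thought is the choice of the two linear relations: one must arrange that the $P_{2p_1-1}$ and $P_{p_2-1}$ contributions to (b)(i) cancel --- here they are $2$ and $-2$ --- while still making the even-indexed column sum in (b)(iii) odd. This is why one uses the relation $\sum_{j=1}^{p_1-1}(-1)^{j}\beta_{2j}=-2$ (Equation~(\ref{Eqn-p-alt-sum2}) for $p_1$) rather than the ``half-length'' relation $\sum_{j=1}^{(p_1-1)/2}(-1)^{j}\beta_{2j}=-1$ of Equation~(\ref{Eqn-2p-alt-sum}) that drives the companion lemmas in this section. Once these relations are fixed the combinatorial core is the integer matrix already built in Lemma~\ref{Lemma-p-q-3mod4}, and no separate handling of the zero eigenvalue $\beta_{p_1}$ is required.
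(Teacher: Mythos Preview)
Your proof is correct and essentially identical to the paper's own argument: the same $(p_1-1)\times(p_2-1)$ block matrix $L$ from Lemma~\ref{Lemma-p-q-3mod4} is used, with $\ell_{rs}=L_{r/2,s}$ supported on the even-indexed rows $r$, and the three conditions are verified in the same way. The only cosmetic difference is that you justify $\sum_{j=1}^{p_1-1}(-1)^{j+1}\beta_{2j}=2$ by identifying $\beta_{2j}$ with the eigenvalues of $P_{p_1-1}$ and invoking Equation~(\ref{Eqn-p-alt-sum2}), whereas the paper folds the sum via $\beta_{2(p_1-j)}=-\beta_{2j}$ and applies Equation~(\ref{Eqn-2p-alt-sum}); these are equivalent.
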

\begin{proof}
Define the $\left(p_1-1\right)\times \left(p_2-1\right)$ matrix
\begin{equation*}
L:=\begin{bmatrix} A & B& \cdots & B\\C&0&\cdots&0\\ \vdots& \vdots & \ddots & \vdots\\ C&0&\cdots&0\end{bmatrix}
\end{equation*}
containing $\left(\frac{p_2-1}{2}-1\right)$ copies of $B$ and $\left(\frac{p_1-1}{2}-1\right)$ copies of $C$.
Let
\begin{equation*}
    \ell_{rs}=
       \begin{cases}
       L_{\frac{r}{2},s} & \text{if $r$ is even with $2\leq r\leq 2p_1-2$, and  $1\leq s\leq p_2-1$},\\
       0 & \text{otherwise.}
    \end{cases}
\end{equation*}
For Condition~(b)(i) of Theorem~\ref{PGSTcriterion},
\begin{eqnarray*}
\sum_{r=1}^{2p_1-1}\sum_{s=1}^{p_2-1} \ell_{rs}\left(\beta_r +\alpha_s\right)
&=& \sum_{j=1}^{p_1-1}\left(\sum_{s=1}^{p_2-1}L_{j,s}\right)\beta_{2j} + \sum_{s=1}^{p_2-1}\left(\sum_{j=1}^{p_1-1} L_{j,s}\right)\alpha_{s}\\
&=& \sum_{j=1}^{p_1-1}(-1)^{j+1}\beta_{2j} + 
\sum_{s=1}^{p_2-1} (-1)^s\alpha_{s}\\
&=& 2\sum_{j=1}^{\frac{p_1-1}{2}}(-1)^{j+1}\beta_{2j} + 
\sum_{s=1}^{p_2-1} (-1)^s\alpha_{s}\\
&=&0.
\end{eqnarray*}
As the entries of $L$ sum to $0$, Condition~(b)(ii) holds.

For Condition~(b)(iii), 
\begin{equation*}
    \Phi^-_{(1,1), (1,p_2-1)} = \{\beta_r+\alpha_s : \text{$s$ is even}\}
\end{equation*}
and 
\begin{equation*}
    \sum_{\text{$s$ is even}} \sum_{r=1}^{2p_1-1}\ell_{rs} =  \sum_{\text{$s$ is even}} \sum_{j=1}^{p_1-1}L_{j,s} = \sum_{\text{$s$ is even}} (-1)^s = \frac{p_2-1}{2} \equiv 1 \pmod{2}.
\end{equation*}
By Theorem~\ref{PGSTcriterion}, there is no pretty good state transfer between $(1,1)$ and $(1, p_2-1)$ in
$P_{2p_1-1}\square P_{p_2-1}$.
\end{proof}

\begin{lemma}\label{Lemma-2p-q-1mod4-5mod8}
If $p_1 \equiv 1 \pmod 4$ and $p_2\equiv 5 \pmod 8$, then there is no pretty good state transfer from $(1,1)$ to $(1,p_2-1)$ in $P_{2p_1-1}\square P_{p_2-1}$.
\end{lemma}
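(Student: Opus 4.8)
The plan is to mimic the construction used for Lemma~\ref{Lemma-p-q-1mod4}, but adapted to the pair $P_{2p_1-1}\square P_{p_2-1}$ and to the transfer between $(1,1)$ and $(1,p_2-1)$. Recall that for the second factor $P_{p_2-1}$ the relevant negative eigenvalue support indices are the even values of $s$, and when $p_2\equiv 5\pmod 8$ we have $\frac{p_2-1}{4}$ odd, which is exactly the parity feature exploited in Lemma~\ref{Lemma-p-q-1mod4}. So the idea is to build a block matrix $L$ of the usual shape $\begin{bmatrix}A & B&\cdots&B\\ C&0&\cdots&0\\ \vdots&\vdots&\ddots&\vdots\\ C&0&\cdots&0\end{bmatrix}$ out of the $2\times 2$ blocks $A,B,C$ from \eqref{ABCmatrix}, but now using \emph{only half} the eigenvalues in each coordinate, i.e.\ indices $\beta_2,\beta_4,\dots$ on the $P_{2p_1-1}$ side (paired, via $B$-blocks, against a single column per block) and indices $\alpha_1,\alpha_2,\dots,\alpha_{(p_2-1)/2}$ on the $P_{p_2-1}$ side. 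Because $p_1\equiv 1\pmod 4$, we have $\frac{p_1-1}{4}-1$ available as the number of $B$-blocks needed on that side, and $\frac{p_2-1}{4}-1$ as the number of $C$-blocks; the two conditions $p_1\equiv 1\pmod 4$ and $p_2\equiv 5\pmod 8$ are precisely what makes these counts non-negative integers of the right parity.

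The steps, in order: (1) Write down $L$ as an $\frac{p_1-1}{2}\times\frac{p_2-1}{2}$ (or similarly sized) block matrix with one $A$-block, the requisite number of $B$-blocks across the first block-row, and the requisite number of $C$-blocks down the first block-column, zeros elsewhere; record which eigenvalue of $P_{2p_1-1}$ sits on each row ($\beta_{2},\beta_4,\dots$) and which eigenvalue of $P_{p_2-1}$ sits on each column ($\alpha_1,\alpha_2,\dots$). Then set $\ell_{rs}=L$-entry on the support rows/columns and $0$ otherwise. (2) Check (b)(i): by the block structure, row sums of $L$ are $(-1)^{j+1}$ and column sums are $(-1)^{k}$ (this is the defining property of $A,B,C$), so $\sum_{r,s}\ell_{rs}(\beta_r+\alpha_s)$ collapses to $\sum_j(-1)^{j+1}\beta_{2j}+\sum_k(-1)^{k}\alpha_k$, and this vanishes by Equation~\eqref{Eqn-2p-alt-sum} for the $\beta$-sum and Equation~\eqref{Eqn-p-alt-sum} for the $\alpha$-sum. (3) Check (b)(ii): the total sum of the entries of $A,B,C$ is $0$, so $\sum_{r,s}\ell_{rs}=0$. (4) Check (b)(iii): here $\Phi^-_{(1,1),(1,p_2-1)}=\{\beta_r+\alpha_s:\ s\ \text{even}\}$, so the relevant sum is $\sum_{s\ \text{even}}\sum_r\ell_{rs}=\sum_{k\ \text{even over columns}}(-1)^{k}$, which counts half the columns; tracking the column-index dictionary, this equals $\pm\frac{p_2-1}{4}$, which is odd exactly when $p_2\equiv 5\pmod 8$. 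Then Theorem~\ref{PGSTcriterion} gives the conclusion.

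The main obstacle I anticipate is bookkeeping rather than conceptual: getting the exact dimensions of $L$ right and the precise dictionary between block positions and the eigenvalue indices $\beta_{2j}$, $\alpha_s$, so that the column-parity count in step (4) genuinely lands on $\frac{p_2-1}{4}$ (and not, say, $\frac{p_2-1}{4}$ shifted by the $A$-block contribution or by whether the first column is counted as ``even''). One has to be careful that the single $A$-block occupies the $(1,1)$ block and that its two columns correspond to $\alpha_1,\alpha_2$, so that ``$s$ even'' picks out one column from the $A$-block plus one column from each $B$-block, giving $1+\bigl(\frac{p_2-1}{4}-1\bigr)=\frac{p_2-1}{4}$ odd-signed contributions — this is where the $p_2\equiv 5\pmod 8$ hypothesis is used, and it must be checked that no parity is lost in the $P_{2p_1-1}$ coordinate (which only contributes through $s$-even, so it is inert here, but one should confirm the zero-padding in the definition of $\ell_{rs}$ does not disturb it). Once the indexing is pinned down, steps (2)–(3) are immediate from the column/row-sum identities for $A,B,C$, exactly as in the preceding lemmas.
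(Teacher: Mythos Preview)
Your proposal is correct and follows essentially the same approach as the paper: the paper uses exactly the $\frac{p_1-1}{2}\times\frac{p_2-1}{2}$ matrix $L$ you describe, with rows indexed by $\beta_2,\beta_4,\dots,\beta_{p_1-1}$ and columns by $\alpha_1,\dots,\alpha_{(p_2-1)/2}$, and the verification of (b)(i)--(iii) proceeds just as you outline. One small slip: you have the $B$ and $C$ block counts swapped (there are $\frac{p_2-1}{4}-1$ copies of $B$ across and $\frac{p_1-1}{4}-1$ copies of $C$ down, consistent with your stated matrix dimensions), but you already flagged this as the bookkeeping hazard and it does not affect the argument.
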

\begin{proof}
Define the $\frac{p_1-1}{2}\times \frac{p_2-1}{2}$ matrix
\begin{equation*}
L:=\begin{bmatrix} A & B& \cdots & B\\C&0&\cdots&0\\ \vdots& \vdots & \ddots & \vdots\\ C&0&\cdots&0\end{bmatrix}
\end{equation*}
containing $\left(\frac{p_2-1}{4}-1\right)$ copies of $B$
and $\left(\frac{p_1-1}{4}-1\right)$ copies of $C$.
Let
\begin{equation*}
\ell_{rs} =
    \begin{cases}
       L_{\frac{r}{2},s} & \text{if $r$ is even with $2\leq r\leq p_1-1$, and $1\leq s\leq \frac{p_2-1}{2}$},\\
       0 & \text{otherwise.}
    \end{cases}
\end{equation*}
For Condition~(b)(i) of Theorem~\ref{PGSTcriterion},
\begin{eqnarray*}
\sum_{r=1}^{2p_1-1}\sum_{s=1}^{p_2-1} \ell_{rs}\left(\beta_r +\alpha_s\right)
&=& \sum_{j=1}^{\frac{p_1-1}{2}}\left(\sum_{s=1}^{\frac{p_2-1}{2}} L_{j,s}\right)\beta_{2j} + \sum_{s=1}^{\frac{p_2-1}{2}}\left(\sum_{j=1}^{\frac{p_1-1}{2}} L_{j,s}\right)\alpha_{s}\\
&=& \sum_{j=1}^{\frac{p_1-1}{2}}(-1)^{j+1}\beta_{2j} + 
\sum_{s=1}^{\frac{p_2-1}{2}} (-1)^s\alpha_{s}\\
&=&0.
\end{eqnarray*}
As the entries of $L$ sum to $0$, Condition~(b)(ii) holds.

For Condition~(b)(iii), 
\begin{equation*}
    \Phi^-_{(1,1), (1,p_2-1)} = \{\beta_r+\alpha_s : s \text{ is even}\}
\end{equation*}
and 
\begin{equation*}
    \sum_{\text{$s$ is even}} \sum_{r=1}^{2p_1-1}\ell_{rs} =  \sum_{\text{$s$ is even}} \sum_{j=1}^{\frac{p_1-1}{2}}L_{j,s} = \sum_{\text{$s$ is even}} (-1)^s = \frac{p_2-1}{4} \equiv 1 \pmod{2}.
\end{equation*}
By Theorem~\ref{PGSTcriterion}, there is no pretty good state transfer between $(1,1)$ and $(1, p_2-1)$ in
$P_{2p_1-1}\square P_{p_2-1}$.
\end{proof}

\section{Pretty good state transfer among corner vertices}
\label{pgstsection}
In this section we shall classify the path products 
where pretty good state transfer occurs among all corners.
In order to prove that there is pretty good state transfer between an arbitrary pair of corners in a given path product, it is enough to fix a corner and show that it has pretty good state transfer to
each adjacent corner. This is because pretty good state transfer defines an equivalence relation on the set of vertices. Further, as the automorphism group acts transitively on the set of corners,
we may assume that the fixed corner is $(1,1,\ldots,1)$, so it is enough to prove  pretty good state transfer between $(1,1,\ldots,1)$ and $(1,\ldots,n_i,\ldots,1)$ for all $i$. For convenience of notation, we may rearrange the cartesian factors so that the factor of interest is the first one.

\subsection{$X=P_{p-1}\square Z$}
\begin{lemma}\label{Lemma-pgstPZ} Let $p\equiv 1\pmod8$. Let $Z$ be a finite graph and denote by
$F_Z$ the field generated by its eigenvalues. Assume that $F_p\cap F_Z=\Q$. Then if, for some vertex $z$ of $Z$ the vertices
$(1,z)$ and $(p-1,z)$ in $P_{p-1}\square Z$ are strongly cospectral,
there is pretty good state transfer between $(1,z)$ and $(p-1,z)$.  
\end{lemma}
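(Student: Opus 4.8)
The plan is to apply the PGST criterion of Theorem~\ref{PGSTcriterion}. Strong cospectrality of $(1,z)$ and $(p-1,z)$ is hypothesized, so condition (a) holds; the whole content is to verify condition (b), i.e.\ to show that no integer sequence $\{\ell_i\}$ indexed by the eigenvalues of $P_{p-1}\square Z$ can simultaneously satisfy (b)(i), (b)(ii) and (b)(iii). I would index eigenvalues of the product as $\alpha_r+\mu$, where $\alpha_r=2\cos\frac{r\pi}{p}$ runs over the eigenvalues of $P_{p-1}$ ($1\le r\le p-1$) and $\mu$ runs over $\Phi_z$, and write $\ell_{r,\mu}$ for the hypothetical coefficients. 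The sign set $\Phi^-_{(1,z),(p-1,z)}$ is controlled by the $P_{p-1}$-coordinate: since $E_r e_1=(-1)^{r+1}E_r e_n$ on the path, the minus-eigenvalues are exactly those $\alpha_r+\mu$ with $r$ even (the $Z$-part contributes a common sign that cancels because we transfer $z\to z$). So (b)(iii) reads $\sum_{r \text{ even},\,\mu}\ell_{r,\mu}\equiv 1\pmod 2$.

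The key step is to exploit the field-disjointness hypothesis $F_p\cap F_Z=\Q$. Suppose (b)(i) holds: $\sum_{r,\mu}\ell_{r,\mu}(\alpha_r+\mu)=0$. Grouping terms, this becomes $\sum_r\bigl(\sum_\mu \ell_{r,\mu}\bigr)\alpha_r + \sum_\mu\bigl(\sum_r\ell_{r,\mu}\bigr)\mu=0$. Set $c_r:=\sum_\mu\ell_{r,\mu}$ and note $\sum_r c_r\alpha_r=-\sum_\mu(\sum_r\ell_{r,\mu})\mu\in F_p\cap F_Z=\Q$. Thus $\sum_r c_r\alpha_r$ is a rational number, say $q$. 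Now I would use the arithmetic of the $\alpha_r$: by Equation~(\ref{Eqn-p-alt-sum2}), $2+\sum_{r=1}^{p-1}(-1)^r\alpha_r=0$, and together with the fact (Corollary~\ref{linindep} / the basis $\{1,\alpha_1,\dots,\alpha_{(p-3)/2}\}$) that essentially the only $\Q$-linear relations among $1,\alpha_1,\dots,\alpha_{p-1}$ are spanned by this alternating relation and $\alpha_r=-\alpha_{p-r}$, the relation $\sum_r c_r\alpha_r=q$ forces $\{c_r\}$ to be, modulo these, a multiple of the alternating pattern $(-1)^r$. Concretely one gets a parity constraint: reducing mod $2$, $\sum_{r \text{ even}} c_r\equiv t\cdot\frac{p-1}{2}\pmod 2$ for the integer multiple $t$, and likewise (b)(ii), which says $\sum_{r,\mu}\ell_{r,\mu}=\sum_r c_r=0$, pins down $t$ mod~$2$. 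Since $p\equiv 1\pmod 8$, $\frac{p-1}{2}$ is even (in fact $\frac{p-1}{2}\equiv 0\pmod 4$), so $\sum_{r\text{ even}}c_r=\sum_{r\text{ even},\mu}\ell_{r,\mu}$ is forced to be even — contradicting (b)(iii).

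The main obstacle, and the step I would spend the most care on, is making precise the claim that the only relevant $\Q$-linear dependencies among $1$ and the $\alpha_r$ ($1\le r\le p-1$) are generated by $\alpha_r+\alpha_{p-r}=0$ and the alternating relation~(\ref{Eqn-p-alt-sum2}) — and then tracking the parity of the free integer parameter through conditions (b)(i) and (b)(ii) to reach the mod-$2$ contradiction with (b)(iii). This is where $p\equiv 1\pmod 8$ (rather than merely $p\equiv 1\pmod 4$) is used: it guarantees $\frac{p-1}{2}\equiv 0\pmod 4$, which is exactly what kills the parity after (b)(ii) is imposed. I would organize the argument by first reducing to the sub-lattice of coefficient vectors $(c_1,\dots,c_{p-1})$ with $\sum c_r\alpha_r\in\Q$, identify it explicitly, pass to the quotient by the ``symmetric'' relations $\alpha_r=-\alpha_{p-r}$ so that only $\alpha_1,\dots,\alpha_{(p-1)/2}$ and the single relation~(\ref{Eqn-p-alt-sum}) remain, and then do the parity bookkeeping; once that lattice description is in hand, conditions (b)(ii) and (b)(iii) are a short mod-$2$ computation. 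Finally I would remark that the hypothesis $F_p\cap F_Z=\Q$ is exactly what decouples the two families of eigenvalues so that (b)(i) splits as above — without it the $Z$-part could conspire with the $\alpha_r$ and the argument would break.
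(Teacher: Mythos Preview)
Your proposal is correct and follows essentially the same route as the paper: decouple via $F_p\cap F_Z=\Q$ to force $\sum_r c_r\alpha_r=s\in\Z$, use the basis $\{\alpha_1,\dots,\alpha_{(p-1)/2}\}$ together with relation~(\ref{Eqn-p-alt-sum}) to obtain $c_r-c_{p-r}=(-1)^{r+1}s$, then sum and combine with (b)(ii) to get $2\sum_{r\text{ even}}c_r=-\tfrac{p-1}{2}s$, i.e.\ $\sum_{r\text{ even}}c_r=-\tfrac{p-1}{4}s$, which is even precisely because $p\equiv 1\pmod 8$. Your intermediate expression ``$\sum_{r\text{ even}}c_r\equiv t\cdot\tfrac{p-1}{2}\pmod 2$'' is slightly miscalibrated (the actual factor is $\tfrac{p-1}{4}$ with $t=s=q$), but the structure, the identification of the relation lattice, and the use of the congruence on $p$ match the paper's argument exactly.
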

\begin{proof}
We recall that the eigenvalues of $P_{p-1}$ are 
$\alpha_r=2\cos{\frac {r\pi}{p}}$, $r=1$,\dots, $p-1$.
We shall apply Theorem~\ref{PGSTcriterion} to the vertices
$(1,z)$ and $(p-1,z)$. By assumption, Condition (a) of Theorem~\ref{PGSTcriterion} holds. 
Let $\mu_1$, \dots, $\mu_m$ be the eigenvalues of $Z$.
Then the eigenvalues of $P_{p-1}\square Z$ are the values
$\alpha_r+\mu_j$  for $1\le r\leq p-1$ and $1\le j\le m$.
We shall assume that there is a sequence of $(p-1)m$ integers $\ell_{rj}$ that satisfy Conditions (b)(i) and (b)(ii) of Theorem~\ref{PGSTcriterion} and prove that Condition (b)(iii) is impossible.

Condition (b)(i) takes the form 
\begin{equation}\label{eigen_eqn}
  \sum_{r,j} \ell_{rj}(\alpha_{r}+\mu_j)=0
\end{equation}
and Condition (b)(ii) is
\begin{equation}\label{coeff_sum}
    \sum_{r,j} \ell_{rj}=0.
\end{equation}
The set $\Phi^-_{(1,z), (p-1,z)}$ consists of
those eigenvalues $\alpha_{r}+\mu_j$ for which $r$ is even.
Therefore Condition (b)(iii) is
\begin{equation*}\label{minus_sum}
    \sum_{\text{$r,j$: $r$ even}} \ell_{rj}\equiv1\pmod2.
\end{equation*}
For $r=1$,\dots, $p-1$, let $a_r=\sum_j\ell_{rj}$.
Then we can we rewrite \eqref{eigen_eqn} as
\begin{equation}\label{eigen_eqn2}
\sum_{r=1}^{p-1}a_r\alpha_r= - \sum_{r,j} \ell_{rj}\mu_j.
\end{equation}
The left hand side of \eqref{eigen_eqn2} lies in $F_p$, while the right hand side lies in $F_Z$. So by our hypothesis
on the intersection of these fields, the common value of \eqref{eigen_eqn2} must be rational
and, in fact, an integer, since it is clearly an algebraic integer. We denote this integer by $s$. 
Then we have 
\begin{equation}\label{peqn}
  \sum_{r=1}^{p-1}a_r\alpha_r= \sum_{r=1}^{\frac{p-1}{2}} (a_r - a_{p-r}) \alpha_r = s.
\end{equation}
Using Equation~(\ref{Eqn-p-alt-sum}), we can replace $1$ in $\{1, \alpha_1, \dots, \alpha_{\frac{p-3}{2}}\}$ with $\alpha_{\frac{p-1}{2}}$ to form a basis $\{\alpha_1,\dots, \alpha_{\frac{p-1}{2}}\}$ of $F_p$.
Then Equations~(\ref{Eqn-p-alt-sum}) and (\ref{peqn}) give 
\begin{equation*}
    \sum_{r=1}^{\frac{p-1}{2}} \left(a_r-a_{p-r} + (-1)^rs\right) \alpha_r=0,
\end{equation*}
and $a_r-a_{p-r} = (-1)^{r+1}s$, for $r=1,\dots,\frac{p-1}{2}$.

Thus, for $i=1,\dots,\frac{(p-1)}{2},$
\begin{equation*}
a_{2i}-a_{p-2i}=-s.
\end{equation*}
Summing over $i$ yields
\begin{equation*}
  \sum_{\text{$r,j$: $r$  even}} \ell_{rj} -\sum_{\text{$r,j$: $r$  odd}} \ell_{rj}=-\frac{(p-1)}2s,
\end{equation*}
and if we add this equation to \eqref{coeff_sum} we obtain
\begin{equation*}
  2\sum_{\text{$r,j$: $r$ even}} \ell_{rj}=-\frac{(p-1)}2s.
\end{equation*}

Thus, since $p_1\equiv1\pmod8$, we have $\sum_{\text{$r,j$: $r$  even}} \ell_{rj}\equiv0\pmod2$. Therefore, we have shown that whenever the first two conditions (b)(i)
and (b)(ii) of Theorem~\ref{PGSTcriterion} are true the third condition (b)(iii) is false.
\end{proof}

\subsection{$X=P_{2p-1}\square Z$}
\begin{lemma}\label{Lemma-pgst2PZ} Let $p\equiv 1\pmod4$. Let $Z$ be a finite graph and denote by
$F_Z$ the field generated by its eigenvalues. Assume that $F_{2p}\cap F_Z=\Q$. Then if, for some vertex $z$ of $Z$ the vertices
$(1,z)$ and $(2p-1,z)$ in $P_{2p-1}\square Z$ are strongly cospectral,
there is pretty good state transfer between $(1,z)$ and $(2p-1,z)$.  
\end{lemma}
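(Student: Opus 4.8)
The plan is to mimic the structure of the proof of Lemma~\ref{Lemma-pgstPZ} almost verbatim, replacing the path $P_{p-1}$ with $P_{2p-1}$ and the relation~\eqref{Eqn-p-alt-sum} with~\eqref{Eqn-2p-alt-sum}. Recall the eigenvalues of $P_{2p-1}$ are $\beta_r=2\cos\frac{r\pi}{2p}$, $r=1,\dots,2p-1$, with $\beta_{p}=0$ the unique rational one and $\beta_r=-\beta_{2p-r}$. First I would apply Theorem~\ref{PGSTcriterion} to $(1,z)$ and $(2p-1,z)$: Condition~(a) holds by hypothesis, so assume integers $\ell_{rj}$ satisfy (b)(i) $\sum_{r,j}\ell_{rj}(\beta_r+\mu_j)=0$ and (b)(ii) $\sum_{r,j}\ell_{rj}=0$, where $\mu_1,\dots,\mu_m$ are the eigenvalues of $Z$; the goal is to show (b)(iii) $\sum_{r\text{ even}}\sum_j\ell_{rj}\equiv 1\pmod 2$ is impossible. (One should check here that, as in the even-path computations of Section~\ref{nopgstsection}, the negative part $\Phi^-_{(1,z),(2p-1,z)}$ is exactly the set of $\beta_r+\mu_j$ with $r$ even, since the end vertices of $P_{2p-1}$ are strongly cospectral with sign pattern $E_re_1=(-1)^{r+1}E_re_{2p-1}$.)

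Next, set $a_r=\sum_j\ell_{rj}$ for $1\le r\le 2p-1$. Then (b)(i) rearranges to $\sum_{r=1}^{2p-1}a_r\beta_r=-\sum_{r,j}\ell_{rj}\mu_j$; the left side lies in $F_{2p}$ and the right in $F_Z$, so by the hypothesis $F_{2p}\cap F_Z=\Q$ the common value is a rational algebraic integer, call it $s\in\Z$. Using $\beta_r=-\beta_{2p-r}$ and $\beta_p=0$, collapse the sum to $\sum_{r=1}^{p-1}(a_r-a_{2p-r})\beta_r=s$ (the $r=p$ term drops out since $\beta_p=0$). Now I would invoke Equation~\eqref{Eqn-2p-alt-sum}, $1+\sum_{j=1}^{(p-1)/2}(-1)^j\beta_{2j}=0$: since $\{1,\beta_1,\dots,\beta_{p-2}\}$ is a basis of $F_{2p}$, replacing $1$ by $\beta_{p-1}$ gives another basis $\{\beta_1,\dots,\beta_{p-1}\}$, and writing $s=-s\sum_{j=1}^{(p-1)/2}(-1)^j\beta_{2j}$ and comparing coefficients in this basis yields $a_{2j}-a_{2p-2j}=-s$ for $j=1,\dots,\frac{p-1}{2}$, and $a_r-a_{2p-r}=0$ for $r$ odd (and for $r$ even with $r>p-1$, i.e.\ nothing extra).

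Then I would sum the relations $a_{2j}-a_{2p-2j}=-s$ over $j=1,\dots,\frac{p-1}{2}$. The indices $2j$ and $2p-2j$ together with $r=p$ (odd contributions aside) partition the even residues; being careful about which even $r$ in $1,\dots,2p-1$ are hit, this gives $\sum_{r\text{ even}}a_r - (\text{matching terms}) = -\frac{p-1}{2}s$, and combining with $\sum_r a_r=0$ from (b)(ii) one extracts $2\sum_{r\text{ even}}a_r = -\frac{p-1}{2}s$ (exactly paralleling the last display of Lemma~\ref{Lemma-pgstPZ}). Since $p\equiv 1\pmod 4$, $\frac{p-1}{2}$ is even, so $\sum_{r\text{ even}}\sum_j\ell_{rj}=\sum_{r\text{ even}}a_r\equiv 0\pmod 2$, contradicting (b)(iii). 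Hence no such $\{\ell_{rj}\}$ exists and pretty good state transfer occurs. The main obstacle I anticipate is purely bookkeeping: getting the index ranges right when $\beta_r$ runs to $2p-1$ rather than $p-1$ — in particular confirming that the odd-index differences $a_r-a_{2p-r}$ vanish and that the even index $r=p-1\le p-1$ versus the ``wrapped'' even indices are each counted once — and verifying the claimed form of $\Phi^-$; once those are pinned down the divisibility argument is identical to the $p\equiv 1\pmod 8$ case, with the weaker congruence $p\equiv 1\pmod 4$ sufficing because the factor is now $\frac{p-1}{2}$ instead of $\frac{p-1}{2}\cdot\frac12$.
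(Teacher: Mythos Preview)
Your overall strategy is the paper's, but the execution has a genuine gap at the coefficient-comparison step, and it is not mere bookkeeping. Comparing coefficients of $\beta_{2j}$ in $\sum_{r=1}^{p-1}(a_r-a_{2p-r})\beta_r = s = \sum_{j=1}^{(p-1)/2}(-1)^{j+1}s\,\beta_{2j}$ gives $a_{2j}-a_{2p-2j}=(-1)^{j+1}s$, with sign \emph{alternating} in $j$, not the constant $-s$ you wrote. (In Lemma~\ref{Lemma-pgstPZ} the constant $-s$ arises because one evaluates $a_r-a_{p-r}=(-1)^{r+1}s$ at $r=2i$; here relation~\eqref{Eqn-2p-alt-sum} already sits at even index, so the exponent is $j+1$, not $2j+1$.) More seriously, the parallel with Lemma~\ref{Lemma-pgstPZ} breaks structurally: there $p-2i$ is odd, so summing yields $\sum_{\text{even}}a_r-\sum_{\text{odd}}a_r$, which combines with (b)(ii). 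Here $2p-2j$ is \emph{even}, so summing yields only the difference of the two halves of the even-indexed $a_r$'s, and (b)(ii) says nothing about that difference. Your formula $2\sum_{\text{even}}a_r=-\frac{p-1}{2}s$ therefore does not follow --- and even if it did, $\tfrac{p-1}{2}$ even would only make the right side divisible by $2$, not by $4$, so the parity conclusion would still fail.

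The correct route (the paper's) exploits precisely the alternating sign: since $p\equiv 1\pmod 4$, $\frac{p-1}{2}$ is even and $\sum_{j=1}^{(p-1)/2}(-1)^{j+1}s=0$. Hence $\sum_{1\le j\le(p-1)/2}a_{2j}=\sum_{(p+1)/2\le j\le p-1}a_{2j}$, so $\sum_{\text{$r$ even}}a_r$ is twice either half and is therefore even --- with no appeal to (b)(ii) needed at this step.
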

  \begin{proof}
We recall that the eigenvalues of $P_{2p-1}$ are 
$\beta_r=2\cos{\frac {r\pi}{2p}}$, $r=1$,\dots, $2p-1$.
Let $\mu_1$, \dots, $\mu_m$ be the eigenvalues of $Z$.
Similar to the proof of Lemma~\ref{Lemma-pgstPZ}, we
shall assume that there is a sequence of $(2p-1)m$ integers $\ell_{rj}$ that satisfy Conditions (b)(i) and (b)(ii) of Theorem~\ref{PGSTcriterion} and prove that Condition (b)(iii) is impossible.

Condition (b)(i) takes the form 
\begin{equation}\label{eigen_eqn_2p}
  \sum_{r,j} \ell_{rj}(\beta_{r}+\mu_j)=0
\end{equation}
and Condition (b)(ii) is
\begin{equation*}\label{coeff_sum_2p}
    \sum_{r,j} \ell_{rj}=0.
\end{equation*}
The set $\Phi^-_{(1,z), (2p-1,z)}$ consists of
those eigenvalues $\beta_{r}+\mu_j$ for which $r$ is even.
Therefore Condition (b)(iii) is
\begin{equation*}\label{minus_sum_2p}
    \sum_{\text{$r,j$: $r$ even}} \ell_{rj}\equiv1\pmod2.
\end{equation*}
For $r=1$,\dots, $2p-1$, let $a_r=\sum_j\ell_{rj}$.
Then we can we rewrite \eqref{eigen_eqn_2p} as
\begin{equation}\label{eigen_eqn2_2p}
\sum_{r=1}^{2p-1}a_r\beta_r= - \sum_{r,j} \ell_{rj}\mu_j.
\end{equation}
The left hand side of \eqref{eigen_eqn2_2p} lies in $F_{2p}$, while the right hand side lies in $F_Z$. So by our hypothesis
on the intersection of these fields, the common value of \eqref{eigen_eqn2_2p} must be an integer, denoted by $s$. 
As $\beta_p=0$, we have 
\begin{equation}\label{peqn_2p}
  \sum_{r=1}^{2p-1}a_r\beta_r= \sum_{r=1}^{p-1} (a_r - a_{2p-r}) \beta_r = s.
\end{equation}
Using Equation~(\ref{Eqn-2p-alt-sum}), we can replace $1$ in $\{1, \beta_1, \dots, \beta_{p-2}\}$ with $\beta_{p-1}$ to form a basis $\{\beta_1,\dots, \beta_{p-1}\}$ of $F_{2p}$.
It follows from Equations~(\ref{Eqn-2p-alt-sum}) and (\ref{peqn_2p}) that 
\begin{equation*}
    \sum_{j=1}^{\frac{p-1}{2}} \left(a_{2j}-a_{2p-2j} + (-1)^js\right) \beta_{2j} + \sum_{j=1}^{\frac{p-1}{2}} (a_{2j+1} - a_{2p-2j-1}) \beta_{2j+1} =0,
\end{equation*}
and the coefficients of the $\beta_r$'s are zero.
For $i=1,\dots,\frac{(p-1)}{2}$, we have
\begin{equation*}
a_{2i}-a_{2p-2i}=(-1)^{i+1} s.
\end{equation*}
Since $\frac{p-1}{2}$ is even, summing over $i$ yields
\begin{equation*}
  \sum_{i=1}^{\frac{p-1}{2}}a_{2i} - \sum_{i=\frac{p+1}{2}}^{p-1} a_{2i}=0,
\end{equation*}
and 
\begin{equation*}
  \sum_{\text{$r,j$: $r$ even}} \ell_{rj}=\sum_{i=1}^{\frac{p-1}{2}}a_{2i} + \sum_{j=\frac{p+1}{2}}^{p-1} a_{2j} = 2 \left(\sum_{j=\frac{p+1}{2}}^{p-1} a_{2j}\right) \equiv 0 \pmod 2.
\end{equation*}
Therefore, we have shown that whenever the first two conditions (b)(i)
and (b)(ii) of Theorem~\ref{PGSTcriterion} are true the third condition (b)(iii) is false.
\end{proof}
\subsection{$X=P_{2^e-1}\square Z$}

Let $\gamma_r := 2\cos \frac{r\pi}{2^e}$ ($r=1\dots, 2^e-1$) be the eigenvalues of $P_{2^e-1}$.
  \begin{lemma}\label{Lemma-pgst2Z} 
Let $Z$ be a finite graph and denote by
$F_Z$ the field generated by its eigenvalues. Assume that $F_{2^e}\cap F_Z=\Q$. Then if, for some vertex $z$ of $Z$ the vertices
$(1,z)$ and $(2^e-1,z)$ in $P_{2^e-1}\square Z$ are strongly cospectral,
there is pretty good state transfer between $(1,z)$ and $(2^e-1,z)$.  
\end{lemma}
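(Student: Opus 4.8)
The plan is to run the same argument as in the proofs of Lemmas~\ref{Lemma-pgstPZ} and \ref{Lemma-pgst2PZ}, applied to the pair $(1,z)$, $(2^e-1,z)$ via Theorem~\ref{PGSTcriterion}. Condition~(a) holds by hypothesis, so I would assume there is a sequence of integers $\ell_{rj}$ ($1\le r\le 2^e-1$, $1\le j\le m$, where $\mu_1,\dots,\mu_m$ are the eigenvalues of $Z$) satisfying Condition~(b)(i), $\sum_{r,j}\ell_{rj}(\gamma_r+\mu_j)=0$, and Condition~(b)(ii), $\sum_{r,j}\ell_{rj}=0$, and then show that Condition~(b)(iii) fails. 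As in the other two cases, $\Phi^-_{(1,z),(2^e-1,z)}$ consists precisely of the eigenvalues $\gamma_r+\mu_j$ with $r$ even, so (b)(iii) reads $\sum_{\text{$r,j$: $r$ even}}\ell_{rj}\equiv1\pmod2$.

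Writing $a_r=\sum_j\ell_{rj}$ and rearranging (b)(i) as $\sum_{r=1}^{2^e-1}a_r\gamma_r=-\sum_{r,j}\ell_{rj}\mu_j$, the left side lies in $F_{2^e}$ and the right side in $F_Z$; since $F_{2^e}\cap F_Z=\Q$ and the common value is an algebraic integer, it equals an integer $s$. The two structural facts I would then use are: $\gamma_{2^{e-1}}=2\cos(\pi/2)=0$ and $\gamma_{2^e-r}=-\gamma_r$, so that $\sum_{r=1}^{2^e-1}a_r\gamma_r=\sum_{r=1}^{2^{e-1}-1}(a_r-a_{2^e-r})\gamma_r=s$; and, by Equation~(\ref{Eqn-basis}) with $n+1=2^e$ (so $d=\tfrac12\phi(2^{e+1})=2^{e-1}$), the set $\{1,\gamma_1,\dots,\gamma_{2^{e-1}-1}\}$ is a $\Q$-basis of $F_{2^e}$. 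In particular $\gamma_1,\dots,\gamma_{2^{e-1}-1}$ are linearly independent over $\Q$ and $1$ is not a $\Q$-linear combination of them, so $s=0$ and $a_r=a_{2^e-r}$ for all $1\le r\le 2^{e-1}-1$. This is the point where the $2^e$ case is actually \emph{simpler} than the prime cases: there is no nontrivial alternating relation among the $\gamma_r$ to absorb the value $s$, so $s$ is forced to vanish and no congruence hypothesis on $e$ is required.

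Finally I would feed this back into (b)(ii) and (b)(iii). From $\sum_{r=1}^{2^e-1}a_r=0$ together with $a_r=a_{2^e-r}$ we get $a_{2^{e-1}}+2\sum_{r=1}^{2^{e-1}-1}a_r=0$, so $a_{2^{e-1}}$ is even. Since $e\ge2$ the index $2^{e-1}$ is even, and it is the only even index in $\{1,\dots,2^e-1\}$ not paired with another even index under $r\leftrightarrow 2^e-r$; hence
\[
\sum_{\text{$r,j$: $r$ even}}\ell_{rj}=\sum_{\text{$r$ even}}a_r=a_{2^{e-1}}+2\!\!\sum_{\substack{\text{$r$ even}\\ r<2^{e-1}}}\!\!a_r\equiv a_{2^{e-1}}\equiv0\pmod2,
\]
contradicting (b)(iii). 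So whenever (b)(i) and (b)(ii) hold, (b)(iii) is false, and Theorem~\ref{PGSTcriterion} gives pretty good state transfer between $(1,z)$ and $(2^e-1,z)$. The only step needing care is the bookkeeping around the unpaired even index $2^{e-1}$, where $\gamma_{2^{e-1}}=0$; I do not expect a real obstacle, since the essential observation — that the cyclotomic relation for $P_{2^e-1}$ is trivial and thus pins $s$ to $0$ — is exactly what removes the arithmetic side condition present in the $p-1$ and $2p-1$ cases.
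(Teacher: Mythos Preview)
Your proof is correct and follows essentially the same route as the paper's: isolate $\sum_r a_r\gamma_r$, use $F_{2^e}\cap F_Z=\Q$ to get an integer $s$, fold via $\gamma_{2^e-r}=-\gamma_r$, and use the basis $\{1,\gamma_1,\dots,\gamma_{2^{e-1}-1}\}$ to force $s=0$ and $a_r=a_{2^e-r}$. The only cosmetic difference is in the final parity step: the paper observes that the \emph{odd} indices pair up completely under $r\leftrightarrow 2^e-r$, so $\sum_{\text{$r$ odd}}a_r$ is even and then (b)(ii) gives $\sum_{\text{$r$ even}}a_r$ even, whereas you work directly with the even indices and handle the unpaired $a_{2^{e-1}}$ via (b)(ii); both are valid and equivalent.
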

\begin{proof}
Let $\mu_1$,\dots,$\mu_m$ be the eigenvalues of $Z$.
We apply the criteria of Theorem~\ref{PGSTcriterion}. The strong
cospectrality condition (a) of that theorem holds by assumption
The eigenvalues in $\Phi^-_{(1,z),(2^e-1,z)}$ are $\gamma_r+\mu_j$ with $r$ even.
Suppose Condition~(b)(i) of Theorem~\ref{PGSTcriterion} holds. Then, as in previous arguments, we isolate the terms coming from eigenvalues of $P_{2^e-1}$ and use the hypothesis $F_{2^e}\cap F_Z=\Q$,
and obtain an equation
\begin{equation*}
\sum_{r=1}^{2^e-1} a_r\gamma_r=s,
\end{equation*}
where $a_i$, $s\in \Z$.
Note that $\gamma_{2^{e-1}}=0$. So we have
\begin{equation*}
\sum_{r=1}^{2^{e-1}-1} a_r \gamma_r +\sum_{r=2^{e-1}+1}^{2^e-1} a_r \gamma_r=s. 
\end{equation*}
Since $\gamma_{2^e-r}=-\gamma_r$, the above equation becomes
\begin{equation*}
\sum_{r=1}^{2^{e-1}-1} (a_r-a_{2^e-r})\gamma_r=s. 
\end{equation*}
As $\gamma_1$ has degree $2^{e-1}$, the set $\{1,\gamma_1, \dots, \gamma_{2^{e-1}-1}\}$ is linearly independent.
Hence $a_r= a_{2^e-r}$, for $r=1,\dots 2^{e-1}$, and $s=0$.
It follows that $\sum_{\text{$r$ odd}} a_r$  is even. If we assume
     Condition~(b)(ii) in Theorem~\ref{PGSTcriterion}, then $\sum_{r=1}^{2^e-1}a_r=0.$ 
     Therefore $\sum_{\text{$r$ even}} a_r$  is even.
     Then by Theorem~\ref{PGSTcriterion} we have pretty good state transfer between $(1,z)$ and
     $(2^e-1,z)$.
\ignore{
    Let $\om=e^{\frac{2\pi i}{2^{e+1}}}$, a primitive $2^{e+1}$-th root of unity. The eigenvalues of $P_{2^e-1}$ are $\om^r+\om^{-r}$, for $r=1$,\dots, $2^e-1$.
    Let $\mu_1$,\dots,$\mu_m$ be the eigenvalues of $Z$.
    We apply the criteria of Theorem~\ref{PGSTcriterion}. The strong
    cospectrality condition (a) of that theorem holds by assumption
    The eigenvalues in
    $\Phi^-_{(1,z),(2^e-1,z)}$ are the $(\om^r+\om^{-r})+\mu_j$ with $r$ even.
    Suppose equation (b)(i) of Theorem~\ref{PGSTcriterion} holds. Then, as in previous arguments, we isolate the terms coming from eigenvalues of $P_{2^e-1}$ and use the hypothesis $F_{2^e}\cap F_Z=\Q$,
    resulting in an equation
    \begin{equation}\sum_{r=1}^{2^e-1} a_r(\om^r+\om^{-r})=s,
    \end{equation}
    where $a_i$, $s\in \Z$.
    Note that for $r=2^{e-1}$, we have $\om^r+\om^{-r}=i+(-i)=0$. So we have

    \begin{equation}\sum_{r=1}^{2^{e-1}-1} a_r(\om^r+\om^{-r})+\sum_{r=2^{e-1}+1}^{2^e-1} a_r(\om^r+\om^{-r})=s. 
    \end{equation}
    Set $j=2^e-r$ in the second sum and rearrange, using
      $\om^{2^e}=-1$ to get
     \begin{equation}\sum_{r=1}^{2^{e-1}-1} (a_r-a_{2^e-r})(\om^r+\om^{-r})=s. 
     \end{equation}
     Multiply by $\om^{2^{e-1}}$ to get
     \begin{equation}\sum_{r=1}^{2^{e-1}-1} (a_r-a_{2^e-r})(\om^{2^{e-1}+r}+
       \om^{2^{e-1}-r})-s\om^{2^{e-1}}=0. 
     \end{equation}
     This is a polynomial equation in $\om$ of degree no greater than $2^e-1$. Since the minimum polynomial of $\om$ over $\Q$ is $x^{2^e}+1$, all coefficients  $a_r-a_{2^e-r}$ and $s$ must be zero. It follows that $\sum_{\text{$r$ odd}} a_r$  is even. If we assume
     condition Theorem~\ref{PGSTcriterion}(b)(ii), then $\sum_{r=1}^{2^e-1}a_r=0.$ 
     Therefore $\sum_{\text{$r$ even}} a_r$  is even.
     Then by Theorem~\ref{PGSTcriterion} we have pretty good state transfer between $(1,z)$ and
     $(2^e-1,z)$.
 }    
\end{proof}
\begin{lemma}\label{Lemma-pgst2e-p} 
For prime $p\geq 3$ and $e\geq 2$, there is pretty good state transfer between $(1,1)$ and $(1,p-1)$ in $P_{2^e-1}\square P_{p-1}$.
\end{lemma}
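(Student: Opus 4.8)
The plan is to apply Theorem~\ref{PGSTcriterion} to the pair of corners $(1,1)$ and $(1,p-1)$ of $X=P_{2^e-1}\square P_{p-1}$. Write $\gamma_r=2\cos\frac{r\pi}{2^e}$ for $1\le r\le 2^e-1$ and $\alpha_s=2\cos\frac{s\pi}{p}$ for $1\le s\le p-1$, so that the eigenvalues of $X$ are the sums $\gamma_r+\alpha_s$, and let $F_{2^e}$, $F_p$ denote the fields they respectively generate; since $\gcd(2^{e+1},2p)=2$, these two real cyclotomic fields meet only in $\Q$. I first check condition (a): if $\gamma_r+\alpha_s=\gamma_{r'}+\alpha_{s'}$ then $\gamma_r-\gamma_{r'}=\alpha_{s'}-\alpha_s$ lies in $F_{2^e}\cap F_p=\Q$ and is an algebraic integer, hence an integer, and the argument of Lemma~\ref{simpleY} (the subfield tower of $F_{2^e}$, together with the fact that $0$ is the only rational eigenvalue of a path) forces $r=r'$, $s=s'$; thus $X$ has simple eigenvalues. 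As an automorphism of $X$ fixing the first coordinate interchanges the ends $1$ and $p-1$ of $P_{p-1}$, the two corners are cospectral, hence strongly cospectral by Lemma~\ref{simplesc}. Using the eigenvector $\big(\sin\frac{sk\pi}{p}\big)_{k}$ of the $\alpha_s$-eigenspace of $P_{p-1}$, one sees that the corresponding spectral idempotent sends $e_{p-1}$ to $(-1)^{s+1}$ times its image of $e_1$; tensoring with the idempotents of $P_{2^e-1}$ gives $\Phi^-_{(1,1),(1,p-1)}=\{\gamma_r+\alpha_s: s\ \text{even}\}$.

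Now suppose integers $\{\ell_{rs}\}$ satisfy conditions (b)(i) and (b)(ii) of Theorem~\ref{PGSTcriterion}; I will show (b)(iii) must fail. Put $b_r=\sum_s\ell_{rs}$ and $c_s=\sum_r\ell_{rs}$. Condition (b)(i) becomes $\sum_r b_r\gamma_r=-\sum_s c_s\alpha_s$, and as the left side lies in $F_{2^e}$ and the right side in $F_p$, their common value is a rational algebraic integer $t\in\Z$. Arguing exactly as in the proof of Lemma~\ref{Lemma-pgst2Z} --- using $\gamma_{2^{e-1}}=0$, $\gamma_{2^e-r}=-\gamma_r$, and the linear independence of $\{1,\gamma_1,\dots,\gamma_{2^{e-1}-1}\}$ (Equation~(\ref{Eqn-basis})) --- forces $t=0$. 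Consequently $\sum_{s=1}^{p-1}c_s\alpha_s=0$; rewriting this using $\alpha_{p-s}=-\alpha_s$ and the fact that $\{\alpha_1,\dots,\alpha_{(p-1)/2}\}$ is a basis of $F_p$ (Equation~(\ref{Eqn-p-alt-sum})) gives $c_s=c_{p-s}$ for $1\le s\le\frac{p-1}{2}$. Since $s\mapsto p-s$ is a bijection between the even and the odd elements of $\{1,\dots,p-1\}$, we get $\sum_{s\ \text{even}}c_s=\sum_{s\ \text{odd}}c_s$, whence $2\sum_{s\ \text{even}}c_s=\sum_{s=1}^{p-1}c_s=\sum_{r,s}\ell_{rs}=0$ by (b)(ii). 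Therefore $\sum_{r,s:\,s\ \text{even}}\ell_{rs}=\sum_{s\ \text{even}}c_s=0$ is even, contradicting (b)(iii). By Theorem~\ref{PGSTcriterion}, pretty good state transfer occurs between $(1,1)$ and $(1,p-1)$.

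The essential point --- and the only place the special shape of the factors is used --- is the interplay between them: the path $P_{2^e-1}$ supplies the genuinely linearly independent set $\{1,\gamma_1,\dots,\gamma_{2^{e-1}-1}\}$, which pins the rational constant $t$ to $0$ for every odd prime $p$ (whereas the relation $1+\sum_j(-1)^j\alpha_j=0$ on the $P_{p-1}$ side would only give $c_s-c_{p-s}=(-1)^s t$), and only with $t=0$ does the symmetry $c_s=c_{p-s}$ emerge to kill the parity obstruction. This is exactly why the conclusion holds for all $p\ge 3$, in contrast with Lemmas~\ref{Lemma-pgstPZ} and \ref{Lemma-pgst2PZ}, where a congruence condition on $p$ was needed. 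The routine bookkeeping I have omitted is the sign computation for $\Phi^-_{(1,1),(1,p-1)}$ and the check that the degenerate case $p=3$ (where $P_{p-1}=P_2$ and $F_p=\Q$) is covered by the same formulas.
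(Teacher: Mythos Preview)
Your proof is correct and follows essentially the same route as the paper's: separate the two path contributions using $F_{2^e}\cap F_p=\Q$, invoke the argument of Lemma~\ref{Lemma-pgst2Z} to force the common value to be $0$, and then use the basis $\{\alpha_1,\dots,\alpha_{(p-1)/2}\}$ of $F_p$ to get $c_s=c_{p-s}$, which kills the parity condition~(b)(iii). The only cosmetic difference is that the paper dispatches condition~(a) by a direct appeal to Corollary~\ref{Lemma-scprods}, whereas you sketch the simplicity and $\Phi^-$ computation explicitly; your closing paragraph explaining why no congruence on $p$ is needed is a nice addition not present in the paper.
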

\begin{proof}
   We shall apply Theorem~\ref{PGSTcriterion}.
    The condition on strong cospectrality is satisfied, by Corollary~\ref{Lemma-scprods}.
    Recall the eigenvalues of $P_{p-1}$ are  $\alpha_r=2\cos(\frac{r\pi}{2p})$, $r=1$,\dots,$p-1$.
    The set $\Phi^-_{(1,1), (1,p-1)}$ consists of the eigenvalues
    $\gamma_i+\alpha_r$ where
    $r$ is even.

     Suppose Condition~(b)(i)
    of Theorem~\ref{PGSTcriterion} holds.
    Thus for some integers $\ell_{ir}$, we have
    \begin{equation*}
        \sum_{i,r}\ell_{i,r}(\gamma_i+\alpha_r)=0
    \end{equation*}
    Then, as in Lemma~\ref{Lemma-pgst2Z} we
    move the eigenvalues of $P_{2^e-1}$
    to the left side and those of
    $P_{p-1}$ to the right side, 
    resulting in the equation
    \begin{equation*}\label{Eqn2-p}
        \sum_{i,r}\ell_{ir}\gamma_i=-\sum_{i,r}\ell_{ir}\alpha_r.
    \end{equation*}
    Next we use use the fact that 
$F_{2^e}\cap F_p=\Q$.  We may argue exactly as in
    Lemma~\ref{Lemma-pgst2Z} to deduce that the common  value of \eqref{Eqn2-p} is zero.
    Thus, if we set 
    $a_r=\sum_i\ell_{ir}$, we have
\begin{equation*}\label{alphap}
   \sum_{r=1}^{p-1}a_r\alpha_r=\sum_{r=1}^{\frac{p-1}{2}} (a_r-a_{p-r})\alpha_r=0. 
\end{equation*}
We saw in the proof of Lemma~\ref{Lemma-pgstPZ} that $\{\alpha_1, \dots, \alpha_{\frac{p-1}{2}}\}$ is a basis of $F_p$.  Hence $a_r=a_{p-r}$, for $r=1,\dots,\frac{p-1}{2}$, and
$\sum_{\text{$r$ even}} a_r = \sum_{\text{$r$ odd}} a_r$. Taking into account Condition~(b)(ii) of Theorem~\ref{PGSTcriterion}, we
obtain $\sum_{\text{$r$ even}} a_r=0$.
Therefore, by Theorem~\ref{PGSTcriterion},
we have pretty good state transfer from $(1,1)$ to $(1,p-1)$.
\ignore{
    We shall apply Theorem~\ref{PGSTcriterion}.
    The condition on strong cospectrality is satisfied, by Corollary~\ref{Lemma-scprods}.
    The eigenvalues of $P_{p-1}$ are  $\alpha_r:=2\cos(\frac{r\pi}{2p})$, $r=1$,\dots,$p-1$ and, to match our notation with Lemma~\ref{Lemma-pgst2Z}, we shall denote the eigenvalues 
    of $P_{2^e-1}$ by $\om^i+\om^{-i}$, $i=1$,\dots,$2^e-1$, where $\om$
    is a primitive $2^{e+1}$-th root of unity. The set $\Phi^-_{(1,1), (1,p-1)}$ consists of the eigenvalues
    $(\om_i+\om^{-i})+\alpha_r$ where
    $r$ is even.
    Suppose equation (b)(i)
    of Theorem~\ref{PGSTcriterion} holds.
    Thus for some integers $\ell_{ir}$, we have
    \begin{equation}
        \sum_{i,r}\ell_{i,r}((\om^i+\om^{-i})+\alpha_r)=0
    \end{equation}
    Then, as in Lemma~\ref{Lemma-pgst2Z} we
    move the eigenvalues of $P_{2^e-1}$
    to the left side and those of
    $P_{p-1}$ to the right side, 
    resulting in the equation
    \begin{equation}\label{Eqn2-p}
        \sum_{i,r}\ell_{ir}(\om^i+\om^{-i})=-\sum_{i,r}\ell_{ir}\alpha_r.
    \end{equation}
Next we use use the fact that 
$F_{2^e}\cap F_p=\Q$.  We may argue exactly as in
    Lemma~\ref{Lemma-pgst2Z} to deduce that the common  value of \eqref{Eqn2-p} is zero.
    Thus, if we set 
    $a_r=\sum_i\ell_{ir}$, we have
\begin{equation}\label{alphap}
   \sum_{r=1}^{p-1}a_r\alpha_r=0. 
\end{equation}
Let $\zeta=e^{\frac{i\pi}{p}}$, so that
$\alpha_r=\zeta^r+\zeta^{-r}=\zeta^r-\zeta^{p-r}$
and \eqref{alphap} becomes
\begin{equation}
     \sum_{r=1}^{p-1}(a_r-a_{p-r})\zeta^r=0. 
\end{equation}
As $\zeta$, $\zeta^2$,\dots,
$\zeta^{p-1}$ are linearly 
independent over $\Q$, it follows that $a_r=a_{p-r}$
for $r=1$,\dots, $p-1$.
Thus, if we assume condition (b)(ii) of Theorem~\ref{PGSTcriterion}
we have 
\begin{equation}
    0=\sum_{r=1}^{p-1}a_r=
2\sum_{\text{$r$ even}}a_r,
\end{equation}
so condition (b)(iii)
of Theorem~\ref{PGSTcriterion}
is false. Therefore, by
Theorem~\ref{PGSTcriterion},
we have pretty good state transfer from $(1,1)$ to $(1,p-1)$.
}
\end{proof}

\begin{lemma}\label{Lemma-pgst2e-2p} 
For prime $p\geq 3$ and $e\geq 2$, there is pretty good state transfer between $(1,1)$ and $(1,2p-1)$ in $P_{2^e-1}\square P_{2p-1}$.
\end{lemma}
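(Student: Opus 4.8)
The plan is to apply Theorem~\ref{PGSTcriterion} to the corners $(1,1)$ and $(1,2p-1)$ of $X=P_{2^e-1}\square P_{2p-1}$, following the same template as Lemma~\ref{Lemma-pgst2e-p} but with the factor $P_{2p-1}$ in place of $P_{p-1}$. First I would dispose of Condition~(a): for $p\ge 5$ the strong cospectrality of $(1,1)$ and $(1,2p-1)$ is Corollary~\ref{Lemma-scprods}, and for $p=3$ the path $P_{2p-1}=P_5$ has simple eigenvalues generating $F_6=\Q(\sqrt3)$, so since $F_{2^e}\cap\Q(\sqrt3)=\Q$ (the cyclotomic orders $2^{e+1}$ and $12$ meet in $\Q(i)$, and all eigenvalues are real) the argument of Lemma~\ref{simpleY} still shows $P_{2^e-1}\square P_5$ has simple eigenvalues, so Lemma~\ref{simplesc} applies. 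Writing $\gamma_i=2\cos\frac{i\pi}{2^e}$ ($i=1,\dots,2^e-1$) and $\beta_r=2\cos\frac{r\pi}{2p}$ ($r=1,\dots,2p-1$) for the eigenvalues of the two factors, the eigenvalues of $X$ are the sums $\gamma_i+\beta_r$, and since the two corners differ only in the $P_{2p-1}$-coordinate, $\Phi^-_{(1,1),(1,2p-1)}=\{\gamma_i+\beta_r\mid r\text{ even}\}$.

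Next, supposing integers $\ell_{ir}$ satisfy Condition~(b)(i) (and (b)(ii)), I would separate the eigenvalue relation as $\sum_{i,r}\ell_{ir}\gamma_i=-\sum_{i,r}\ell_{ir}\beta_r$. With $b_i=\sum_r\ell_{ir}$ and $a_r=\sum_i\ell_{ir}$, the left side equals $\sum_i b_i\gamma_i\in F_{2^e}$ and the right side equals $-\sum_r a_r\beta_r\in F_{2p}$; since $\Q(\zeta_{2^{e+1}})\cap\Q(\zeta_{4p})=\Q(i)$ and both fields are real, $F_{2^e}\cap F_{2p}=\Q$, so the common value is a rational algebraic integer, i.e.\ an integer $s$. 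The key step is then to force $s=0$: from $\sum_i b_i\gamma_i=-s$, using $\gamma_{2^{e-1}}=0$, $\gamma_{2^e-i}=-\gamma_i$, and the fact that $\{1,\gamma_1,\dots,\gamma_{2^{e-1}-1}\}$ is a basis of $F_{2^e}$ (which has degree $2^{e-1}$, by Equation~(\ref{Eqn-basis}) with $n+1=2^e$), one obtains $\sum_{i=1}^{2^{e-1}-1}(b_i-b_{2^e-i})\gamma_i+s\cdot 1=0$, whence $s=0$ — exactly the step used in Lemmas~\ref{Lemma-pgst2Z} and \ref{Lemma-pgst2e-p}. This is the crux, and it is the special arithmetic of $2^e-1$ paths, rather than any congruence condition on $p$, that kills $s$; that is why (in contrast to Lemma~\ref{Lemma-pgst2PZ}) no hypothesis $p\equiv 1\pmod 4$ is needed.

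With $s=0$ we have $\sum_{r=1}^{2p-1}a_r\beta_r=0$; since $\beta_p=0$ and $\beta_{2p-r}=-\beta_r$ this becomes $\sum_{r=1}^{p-1}(a_r-a_{2p-r})\beta_r=0$. Using Equation~(\ref{Eqn-2p-alt-sum}) to replace $1$ in $\{1,\beta_1,\dots,\beta_{p-2}\}$ by $\beta_{p-1}$ (as in Lemma~\ref{Lemma-pgst2PZ}), the set $\{\beta_1,\dots,\beta_{p-1}\}$ is a basis of $F_{2p}$, forcing $a_r=a_{2p-r}$ for $r=1,\dots,p-1$. Finally, because $2p$ is even the involution $r\mapsto 2p-r$ preserves the parity of $r$, so $\sum_{r\ \text{even}}a_r=\sum_{\,2\le r\le p-1,\ r\ \text{even}}(a_r+a_{2p-r})=2\sum_{\,2\le r\le p-1,\ r\ \text{even}}a_r\equiv 0\pmod 2$; hence $\sum_{i,\ r\ \text{even}}\ell_{ir}=\sum_{r\ \text{even}}a_r$ is even, so Condition~(b)(iii) fails and Theorem~\ref{PGSTcriterion} gives pretty good state transfer. (Note that, unlike in Lemma~\ref{Lemma-pgst2e-p}, Condition~(b)(ii) is not even needed at this last step.) The only genuinely delicate points in the argument are the deduction $s=0$ from the $2^e$-structure and the verification of strong cospectrality in the boundary case $p=3$; the remaining manipulations are routine.
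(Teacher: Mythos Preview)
Your proof is correct and follows essentially the same route as the paper's own argument: separate the eigenvalue relation into $F_{2^e}$ and $F_{2p}$ parts, use the $2^e$-structure (as in Lemma~\ref{Lemma-pgst2Z}) to force the common integer $s=0$, and then use the basis $\{\beta_1,\dots,\beta_{p-1}\}$ of $F_{2p}$ to obtain $a_r=a_{2p-r}$, from which the evenness of $\sum_{r\text{ even}}a_r$ follows by the parity-preserving symmetry $r\mapsto 2p-r$. Your explicit treatment of the boundary case $p=3$ is in fact a welcome refinement, since Corollary~\ref{Lemma-scprods} as stated requires $p\ge 5$; the paper cites it without comment there.
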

\begin{proof}
The proof is very similar
to that of Lemma~\ref{Lemma-pgst2e-p}. We shall apply Theorem~\ref{PGSTcriterion}.
    The condition on strong cospectrality is satisfied, by Corollary~\ref{Lemma-scprods}.
    Recall the eigenvalues of $P_{2p-1}$ are  $\beta_r=2\cos(\frac{r\pi}{2p})$, $r=1$,\dots,$2p-1$.   Note that $\beta_p=0$.
    The set $\Phi^-_{(1,1), (1,2p-1)}$ consists of the eigenvalues
    $\gamma_i+\beta_r$ where $r$ is even.
    Suppose Condition~(b)(i) of Theorem~\ref{PGSTcriterion} holds.
    Thus for some integers $\ell_{ir}$, we have
    \begin{equation*}
        \sum_{i,r}\ell_{ir}(\gamma_i+\beta_r)=0
    \end{equation*}
    Then, as in Lemma~\ref{Lemma-pgst2Z} we
    move the eigenvalues of $P_{2^e-1}$
    to the left side and those of
    $P_{2p-1}$ to the right side, 
    resulting in the equation
    \begin{equation*}\label{Eqn2-2p}
        \sum_{i,r}\ell_{ir} \gamma_i =-\sum_{i,r}\ell_{ir}\beta_r.
    \end{equation*}
Next we use use the fact that 
$F_{2^e}\cap F_{2p}=\Q$.  We may argue exactly as in
    Lemma~\ref{Lemma-pgst2Z} to deduce that the common  value of \eqref{Eqn2-2p} is zero.
    Thus, if we set 
    $a_r=\sum_i\ell_{ir}$, we have 
\begin{equation*}\label{alpha2p}
   \sum_{r=1}^{2p-1}a_r\beta_r =\sum_{r=1}^{p-1} (a_r-a_{2p-r})\beta_r =0. 
\end{equation*}
Since $\beta_1$ has degree $p-1$, $\{\beta_1, \dots, \beta_{p-1}\}$ is a basis of $F_{2p}$ which implies
$a_r=a_{2p-r}$, for $r=1,\dots,2p-1$.  Thus
\begin{equation*}
\sum_{\text{$r$ even}} a_r = 2\left(a_2+a_4+\dots + a_{p-1}\right)
\end{equation*}
so Condition
(b)(iii) of Theorem~\ref{PGSTcriterion} can never hold. Therefore by Theorem~\ref{PGSTcriterion}, we have
pretty good state transfer between $(1,1)$ and $(1,2p-1)$.
\ignore{
The proof is very similar
to that of Lemma~\ref{Lemma-pgst2e-p}. We shall apply Theorem~\ref{PGSTcriterion}.
    The condition on strong cospectrality is satisfied, by Corollary~\ref{Lemma-scprods}.
    The eigenvalues of $P_{2p-1}$ are  $\beta_r:=2\cos(\frac{r\pi}{4p})$, $r=1$,\dots,$2p-1$ and, to match our notation with Lemma~\ref{Lemma-pgst2Z}, we shall denote the eigenvalues 
    of $P_{2^e-1}$ by $\om^i+\om^{-i}$, $i=1$,\dots,$2^e-1$, where $\om$
    is a primitive $2^{e+1}$-th root of unity. The set $\Phi^-_{(1,1), (1,2p-1)}$ consists of the eigenvalues
    $(\om_i+\om^{-i})+\beta_r$ where
    $r$ is even.
    Suppose equation (b)(i)
    of Theorem~\ref{PGSTcriterion} holds.
    Thus for some integers $\ell_{ir}$, we have
    \begin{equation}
        \sum_{i,r}\ell_{i,r}((\om^i+\om^{-i})+\beta_r)=0
    \end{equation}
    Then, as in Lemma~\ref{Lemma-pgst2Z} we
    move the eigenvalues of $P_{2^e-1}$
    to the left side and those of
    $P_{2p-1}$ to the right side, 
    resulting in the equation
    \begin{equation}\label{Eqn2-2p}
        \sum_{i,r}\ell_{ir}(\om^i+\om^{-i})=-\sum_{i,r}\ell_{ir}\beta_r.
    \end{equation}
Next we use use the fact that 
$F_{2^e}\cap F_{2p}=\Q$.  We may argue exactly as in
    Lemma~\ref{Lemma-pgst2Z} to deduce that the common  value of \eqref{Eqn2-2p} is zero.
    Thus, if we set 
    $a_r=\sum_i\ell_{ir}$, we have
\begin{equation}\label{alpha2p}
   \sum_{r=1}^{2p-1}a_r\beta_r=0. 
\end{equation}
Let $\xi=e^{\frac{i\pi}{2p}}$, so that
$\beta_r=\xi^r+\xi^{-r}=\xi^r-\xi^{2p-r}$
and \eqref{alpha2p} becomes
\begin{equation}
     \sum_{r=1}^{2p-1}(a_r-a_{2p-r})\xi^r=0. 
\end{equation}
Dividing by $\xi$, we have an integer polynomial equation in $\xi$ of degree at most $2(p-1)$. Since the minimal polynomial of $\xi$ is
$\sum_{i=0}^{p-1}(-1)^ix^{2i}$
it follows that the coefficient of
every odd power of $\xi$ in the
polynomial equation must be zero.
Hence $a_r=a_{2p-r}$ for every even
$r$ in the range $1\leq r\leq 2p-1$.
It follows that $\sum_{\text{$r$ even}}a_r$
must be an even number, so condition
(b)(iii) of Theorem~\ref{PGSTcriterion} can never hold. Therefore by Theorem~\ref{PGSTcriterion}, we have
pretty good state transfer between $(1,1)$ and $(1,2p-1)$.
}
\end{proof}

\subsection{Proof of Theorem~\ref{classification}}
With Lemma~\ref{Lemma-1factor} and the results in Sections~\ref{nopgstsection} and \ref{pgstsection} at our disposal we are now ready to prove Theorem~\ref{classification}, the classification of path products in which there is pretty good transfer among all corners.

    Suppose $X=P_{n_1}\square \dots \square P_{n_k}$, $k\geq 2$, has pretty good state transfer occurring between any two corners.
    By Lemma~\ref{Lemma-1factor}, $n_i+1$ is either a power of two, $p$ or $2p$ for some prime $p$, for $i=1, \dots, k$.  By Lemma~\ref{Lemma-gcd3-pgst}, we can assume that there are distinct primes $p_1, \dots, p_f, q_1, \dots, q_h$ such that 
    \begin{equation*}
        X=P_{p_1-1}\square \dots \square P_{p_f-1} \square P_{2q_1-1} \square \dots P_{2q_h-1}
    \end{equation*}
    where $f$ and $h$ are non-negative integers whose sum is at least two, 
    or 
    \begin{equation*}
        X=P_{2^e-1}\square P_{p_1-1}\square \dots \square P_{p_f-1} \square P_{2q_1-1} \square \dots P_{2q_h-1},
    \end{equation*}
    for $e\geq 2$ and $f+h\geq 1$.

    Suppose $f+h\geq 2$.  It follows from Lemmas~\ref{Lemma-p-q-3mod4} to \ref{Lemma-2p-q-all-3mod4} that $p_1,\dots, p_f \equiv 1 \pmod 8$ and $q_1, \dots, q_h \equiv 1 \pmod 4$.
    Thus, $X$ has the form of 
    parts (3) or (4) in the statement of Theorem~\ref{classification}.
    
    Conversely, suppose $X$ has
    the form of (3) or (4) in Theorem~\ref{classification}.
    We will show that $X$ has pretty good state transfer among all of its corners. By the discussion at the beginning of this section, it suffices to show that there is pretty good state transfer between $(1,1,\ldots,1)$ and an adjacent corner, which we can
    assume to be $(n_1,1,\ldots,1)$,
    where $n_1$ is one of the path
    lengths in (3) or (4).
    First by Lemma~\ref{Lemma-scprods}, we have strong cospectrality between these
    vertices. Then, depending
    on $n_1$, we can apply 
    Lemmas~\ref{Lemma-pgstPZ} to \ref{Lemma-pgst2Z}, taking
    $Z$ to be the product of the other factors in $X$ and $z$
    to be the vertex $(1,\ldots,1)$ of $Z$, to deduce pretty good state transfer. We must check that the
    hypotheses on the field intersections in Lemmas~\ref{Lemma-pgstPZ} to \ref{Lemma-pgst2Z} are satisfied. The eigenvalues 
    of a path $P_n$ lie in the intersection of the cyclotomic
    field of order $2(n+1)$ with the real field. 
    It follows that the field
    $F_{n_1+1}\cap F_Z$ in these
    Lemmas is a subfield of the threefold intersection  of the real numbers, a cyclotomic field of order $2(n_1+1)$ and a cyclotomic field of order $m$, where $\gcd(m,2(n_1+1))=4$. Hence $F_{n_1+1}\cap F_Z=\Q$.
    This completes the proof
    that the graphs $X$ in (3) and (4) have pretty good state transfer among all corners.
    
    When $f+h=1$,  Lemmas~\ref{Lemma-scprods}, \ref{Lemma-pgst2Z} to \ref{Lemma-pgst2e-2p}, show that pretty good state transfer occurs among all four corners of $P_{2^e-1}\square P_{p-1}$ and $P_{2^e-1}\square P_{2p-1}$ for $e\geq 2$ and prime $p\geq 3$.
 \qed
\section{No Laplacian Pretty good state transfer among corner vertices}
\label{Lpgstsection}

For the Heisenberg Hamiltonian, the continuous-time quantum walk on $X$ is given by $\exp(-itL_X)$, where $L_X$ is the Laplacian matrix of $X$  \cite[IV.E]{K}.
 We shall see in this section that, contrary to Theorem~\ref{classification}, there is no cartesian product of two or more paths with Laplacian pretty good state transfer between any two corners. 

 Given two simple finite graphs $X$ and $Y$ on $n$ and $m$ vertices, respectively, the Laplacian matrix of their cartesian product is 
 \begin{equation*}
     L_{X\square Y} = L_X \otimes I_m + I_n \otimes L_Y.
 \end{equation*}
 The transition matrix of $X\square Y$ based on the Heisenberg Hamiltonian is
 \begin{equation*}
     \exp(-itL_X) \otimes \exp(-itL_Y).
 \end{equation*}
 Hence Lemmas~\ref{Lemma-1factor} and \ref{Lemma-prod-idem} apply to the Laplacian matrix of $X\square Y$.

Laplacian pretty good state transfer occurs between extremal vertices of $P_n$ if and only if $n$ is a power of $2$ \cite{BCGS}. 
By Lemma~\ref{Lemma-1factor}, if $P_{n_1}\square\cdots \square P_{n_k}$ has Laplacian pretty good state transfer between any two corners then each $n_i$ is a power of $2$.
The Laplacian matrix of a path $P_{2^e}$ has eigenvalues $0$, and
$2+2\cos \frac{\pi r}{2^e}$, for $r=1,\dots,2^e-1$.  
The idempotent projector of the $0$-eigenspace is $\frac{1}{n}J_n$, where $J_n$ denotes the $n\times n$ matrix of all ones.
The extremal vertices of $P_{2^e}$ have full eigenvalue support with 
\begin{equation*}
    \Phi^-_{1,2^e} = \left\{2+2\cos \frac{\pi r}{2^e} \ : \ \text{$r$ odd}\right\}.
\end{equation*}

\begin{lemma}\label{Lemma-Lap}
Suppose $f\geq e\geq 1$.  The vertices $(1,1)$ and $(2^e,1)$ in $P_{2^e} \square P_{2^f}$ are not strongly cospectral.
\end{lemma}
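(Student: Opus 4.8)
The plan is to exhibit an explicit eigenspace projector of $P_{2^e}\square P_{2^f}$ that detects the failure of strong cospectrality, using Lemma~\ref{Lemma-prod-idem} in exactly the way Lemma~\ref{Lemma-gcd>=3} does for the adjacency case. Write $n=2^e$, $m=2^f$. Since the Laplacian eigenvalues of $P_n$ are $2-2\cos\frac{r\pi}{n}$ for $r=0,\dots,n-1$ (I will keep the sign convention used in the excerpt), I first look for two distinct pairs $(r,s)$ and $(r',s')$ with $r,r'\le n-1$, $s,s'\le m-1$ such that the Laplacian eigenvalue sums coincide: $2-2\cos\frac{r\pi}{n}+2-2\cos\frac{s\pi}{m}=2-2\cos\frac{r'\pi}{n}+2-2\cos\frac{s'\pi}{m}$. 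The natural choice, as in Lemma~\ref{Lemma-gcd>=3}, is to exploit $\gcd(n,m)=2^e\ge 2$. Setting $g=2^e$, $h_n=1$ (odd) and $h_m=2^{f-e}$, I can take the common eigenvalue $\theta = \bigl(2-2\cos\frac{\pi}{n}\bigr)+\bigl(2-2\cos\frac{2h_m\pi}{m}\bigr)=\bigl(2-2\cos\frac{2\pi}{n}\bigr)+\bigl(2-2\cos\frac{h_m\pi}{m}\bigr)$, which works because $\frac{2h_m\pi}{m}=\frac{2\pi}{2^e}$ and $\frac{h_m\pi}{m}=\frac{\pi}{2^e}$ so both sides equal $4-2\cos\frac{\pi}{2^e}-2\cos\frac{2\pi}{2^e}$ — exactly the adjacency argument transported to the Laplacian.

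The key steps, in order, are: (1) record that the extremal vertices $1$ and $n$ of $P_n$ satisfy $E^{(n)}_r e_1 = (-1)^{r+1}E^{(n)}_r e_n$ where $E^{(n)}_r$ is the projector for the Laplacian eigenvalue $2-2\cos\frac{r\pi}{n}$ (this is the content of the stated fact $\Phi^-_{1,2^e}=\{2+2\cos\frac{r\pi}{2^e}: r\text{ odd}\}$ together with full support, and holds because the path's Laplacian eigenvectors are, up to scaling, cosines $\cos\frac{r(2k-1)\pi}{2n}$); (2) let $W$ be the projector of the $\theta$-eigenspace of $L_{P_n\square P_m}$ and observe, using the tensor formula $L_{P_n\square P_m}=L_{P_n}\otimes I + I\otimes L_{P_m}$, that $W$ is a sum of terms $E^{(n)}_r\otimes F^{(m)}_s$ over all $(r,s)$ with the given eigenvalue sum, and in particular both $(E^{(n)}_{1}\otimes F^{(m)}_{2h_m})W$ and $(E^{(n)}_{2}\otimes F^{(m)}_{h_m})W$ are nonzero; (3) apply Lemma~\ref{Lemma-prod-idem} twice: from the first term, $We_{(1,1)}=\alpha We_{(n,1)}$ would force $E^{(n)}_1 e_1=\alpha E^{(n)}_1 e_n$, i.e. $\alpha=+1$; from the second term it would force $E^{(n)}_2 e_1 = \alpha E^{(n)}_2 e_n$, i.e. $\alpha=-1$ (since $1+1=2$ is even so that eigenvalue lies in $\Phi^-$); these are incompatible, so $We_{(1,1)}\ne\pm We_{(n,1)}$ and the vertices are not strongly cospectral.

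The main obstacle is step (2): one must be sure that the relevant component projectors genuinely survive in $W$, i.e. that $F^{(m)}_{2h_m}\ne 0$ and $F^{(m)}_{h_m}\ne 0$ as projectors of $P_m$ (true since $1\le h_m\le 2h_m\le m-1$ when $f>e$; the case $f=e$ needs a separate, easy check because then $h_m=1$ and $2h_m=2$, still in range as long as $m\ge 3$, i.e. $e\ge 2$, with $e=f=1$ handled directly on $P_2\square P_2$) and that the vertex $z=1$ of the second factor has $1$ in the support of both $F^{(m)}_{2h_m}$ and $F^{(m)}_{h_m}$ (true because extremal vertices of a path have full eigenvalue support). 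Once these nonvanishing facts are in hand, the contradiction $\alpha=1$ and $\alpha=-1$ is immediate and the proof closes. I would also remark that the hypothesis $f\ge e$ is only a normalization (swapping the two factors), so there is no loss of generality.
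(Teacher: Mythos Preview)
Your strategy is exactly the paper's: exhibit a repeated Laplacian eigenvalue of the product and apply Lemma~\ref{Lemma-prod-idem} to two tensor summands that force incompatible signs on the first factor. The paper, however, chooses a simpler collision. It takes $\theta=2+2\cos\frac{\pi}{2^e}$, which splits as $\bigl(2+2\cos\frac{\pi}{2^e}\bigr)+0=0+\bigl(2+2\cos\frac{2^{f-e}\pi}{2^f}\bigr)$, pairing the index $r=1$ on $P_{2^e}$ (in $\Phi^-$) against the all-ones projector $\frac{1}{2^e}J_{2^e}$ (in $\Phi^+$). Because this uses only $r=0$ and $r=1$, it works uniformly for every $f\ge e\ge 1$ with no case analysis.

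Your version has two slips. First, your collision needs both $r=1$ and $r=2$ as indices for $P_{2^e}$, hence $2^e\ge 3$, i.e.\ $e\ge 2$; you handle $e=f=1$ by hand but the cases $e=1$, $f\ge 2$ are left uncovered (and are not a normalization issue, since $f\ge e$ is already assumed). Second, the sign relation is $E^{(n)}_r e_1=(-1)^{r}E^{(n)}_r e_n$ for the Laplacian cosine eigenvectors, not $(-1)^{r+1}$; so $\Phi^-_{1,n}$ corresponds to \emph{odd} $r$, exactly as the paper's displayed formula says, and your remark ``since $1+1=2$ is even so that eigenvalue lies in $\Phi^-$'' has the parities reversed. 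This second slip is harmless for the contradiction (odd $r=1$ and even $r=2$ still yield opposite signs), but the $e=1$ gap is a genuine omission that the paper's choice of $\theta$ avoids.
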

\begin{proof}
Let $E_r$ be the idempotent projector of the $\left(2+2\cos \frac{\pi r}{2^e}\right)$-eigenspace of $L(P_{2^e})$,
and let $F_s$ be the idempotent projector of the $\left(2+2\cos \frac{\pi s}{2^f}\right)$-eigenspace of $L(P_{2^f})$.

Let $W$ be the idempotent projector of the Laplacian matrix of $P_{2^e}\square P_{2^f}$ corresponding to the eigenvalue $\theta = 2+2\cos\frac{\pi}{2^e}$.
As 
\begin{equation*}
   \theta = \left(2+2\cos\frac{\pi}{2^e}\right)+ 0 = 0 + \left(2+2\cos\frac{2^{f-e} \pi }{2^f}\right),  
\end{equation*}both $\left(E_1\otimes \left(\frac{1}{2^f}J_{2^f} \right)\right) W$ and 
$\left(\left(\frac{1}{2^e}J_{2^e} \right) \otimes F_{2^{f-e}}\right)W$ are non-zero.  Since $E_1 e_1 = - E_1e_{2^e}$ and $\left(\frac{1}{2^e}\right)J_{2^e}e_1=+\left(\frac{1}{2^e}\right)J_{2^e}e_{2^e}$, it follows from Lemma~\ref{Lemma-prod-idem} that $We_{(1,1)} \neq \pm W e_{(2^e,1)}$.

\end{proof}

\begin{theorem}
There is no Laplacian pretty good state transfer among the corners of $P_{n_1}\square\cdots \square P_{n_k}$, for all $n_1,\dots, n_k \geq 2$.  
\end{theorem}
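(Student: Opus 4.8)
The plan is to reduce, via the multiplicativity of the transition matrix under cartesian products, to the two‑factor case already handled in Lemma~\ref{Lemma-Lap}. First I would invoke the discussion preceding the theorem: by Lemma~\ref{Lemma-1factor}, which (as noted) applies verbatim with the Laplacian in place of the adjacency matrix, together with the characterization of Laplacian pretty good state transfer on paths from \cite{BCGS}, Laplacian pretty good state transfer between two corners of $P_{n_1}\square\cdots\square P_{n_k}$ forces each $n_i$ to be a power of $2$. So I may assume $n_i=2^{e_i}$ with $e_i\geq 1$ (since $n_i\geq 2$), and after permuting the cartesian factors, $e_1\leq e_2\leq\cdots\leq e_k$; in particular $e_1\leq e_2$.

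Next I would single out the first two factors and write $X=(P_{2^{e_1}}\square P_{2^{e_2}})\square Z$, where $Z=P_{2^{e_3}}\square\cdots\square P_{2^{e_k}}$, interpreted as a single vertex when $k=2$. Let $z=(1,\dots,1)$ be a corner of $Z$. The corners $u=((1,1),z)$ and $v=((2^{e_1},1),z)$ of $X$ are assumed to admit Laplacian pretty good state transfer between them. Since strong cospectrality is necessary for pretty good state transfer for any Hermitian Hamiltonian, in particular the Laplacian, $u$ and $v$ are strongly cospectral for $L_X$. Then I would apply Corollary~\ref{Lemma-prod-sc} — whose proof rests on Lemma~\ref{Lemma-prod-idem}, which the text records as valid for Laplacians — with $Y=P_{2^{e_1}}\square P_{2^{e_2}}$, to conclude that $(1,1)$ and $(2^{e_1},1)$ are strongly cospectral in $P_{2^{e_1}}\square P_{2^{e_2}}$ with respect to the Laplacian.

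Finally, this contradicts Lemma~\ref{Lemma-Lap} applied with $e=e_1$ and $f=e_2\geq e_1\geq 1$, which asserts precisely that $(1,1)$ and $(2^{e_1},1)$ are not strongly cospectral in $P_{2^{e_1}}\square P_{2^{e_2}}$. Hence no such $X$ exists. I do not expect a serious obstacle: the only points requiring care are (i) confirming that the auxiliary results (Lemmas~\ref{Lemma-1factor}, \ref{Lemma-prod-idem}, and Corollary~\ref{Lemma-prod-sc}) transfer to the Laplacian setting, which the text already observes, and (ii) the bookkeeping when $k=2$, where $Z$ degenerates to a one‑vertex graph and Lemma~\ref{Lemma-Lap} is applied directly. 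All the substantive content sits in Lemma~\ref{Lemma-Lap}, so the theorem is essentially a packaging statement.
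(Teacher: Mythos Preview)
Your proposal is correct and follows essentially the same route as the paper: reduce to powers of two, factor out $P_{2^{e_1}}\square P_{2^{e_2}}$ as a cartesian factor, and contradict Lemma~\ref{Lemma-Lap}. The only cosmetic difference is that the paper passes to the two‑path factor using Lemma~\ref{Lemma-1factor} directly (PGST in the product implies PGST in the factor), whereas you pass via strong cospectrality and Corollary~\ref{Lemma-prod-sc}; your explicit ordering $e_1\le e_2$ is a nice touch that makes the application of Lemma~\ref{Lemma-Lap} clean.
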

\begin{proof}
By Lemma~\ref{Lemma-1factor}, we need to consider only the case where $n_1, \dots, n_k$ are powers of two. 

First observe that $P_{n_1}\square\cdots \square P_{n_k}$ is isomorphic to $\left(P_{n_1}\square P_{n_2}\right) \square \left(P_{n_3}\square \dots \square P_{n_k}\right)$ with an isomorphism given by
\begin{equation*}
    (v_1,v_2,\dots,v_n) \mapsto \big((v_1,v_2), (v_3,\dots, v_k)\big).
\end{equation*}
Lemmas~\ref{Lemma-Lap} and \ref{Lemma-1factor} rule out Laplacian pretty good state transfer from $\left((1,1),(1,\dots,1)\right)$ to $\left((n_1,1),(1,\dots,1)\right)$ in $\left(P_{n_1}\square P_{n_2}\right) \square \left(P_{n_3}\square \dots \square P_{n_k}\right)$ when $n_1$ and $n_2$ are powers of two.
Hence there is no Laplacian pretty good state transfer from $(1, 1, \dots, 1)$ to $(n_1, 1, \dots, 1)$ in $P_{n_1}\square\cdots \square P_{n_k}$.\end{proof}

  \section*{Acknowledgements}
  This work was done during a visit by both authors to the University of Waterloo.
  We thank Chris Godsil for making this visit possible and for many interesting
  discussions. We also thank Pedro Vinícius Ferreira Baptista and Gabriel Coutinho
  of the Federal University of Minas Gerais for  some computer calculations of examples.


\end{document}